\documentclass[10pt]{amsart}
\usepackage{amssymb,amsmath, amsthm, amsfonts}
\usepackage{mathrsfs,color}
\usepackage{float}
\usepackage[all]{xypic}
\usepackage{alphalph}

\newcommand{\luk}{\L u\-ka\-si\-e\-w\-icz}
\usepackage{pgf,tikz}
\newtheorem{theorem}{Theorem}[section]
\newtheorem{lemma}[theorem]{Lemma}
\newtheorem{corollary}[theorem]{Corollary}
\newtheorem{proposition}[theorem]{Proposition}

\theoremstyle{definition}

\newtheorem{definition}[theorem]{Definition}
\newtheorem{remark}[theorem]{Remark}
\newtheorem{example}[theorem]{Example}

\newcommand{\scr}[1]{\mathscr {#1}}

\newcommand{\free}{{\bf F}}

\newcommand{\alg}[1]{{\textbf{\upshape #1}}}  %
\newcommand{\vv}[1]{\mathsf{#1}}
\newcommand{\cc}[1]{\mathcal{#1}}
\newcommand{\app}{\approx}
\newcommand{\bt}{{\bf t}}

\newcommand{\Con}[1]{\operatorname{Con}(\alg #1)}

\newcommand{\vuc}[2]{#1_1,\dots,#1_{#2}}
\newcommand{\Sym}{\mathsf{Sym}}
\newcommand{\Alg}{\mathsf{Alg}}

\newcommand{\sse}{\subseteq}
\newcommand{\Var}{{\rm Var}}

\newcommand{\Gen}{\mathscr{G}}
\newcommand{\Z}{\mathbb{Z}}

\usepackage[left=3cm,right=3cm,top=2.5cm,bottom=2.5cm]{geometry}

\begin{document}
\title[]{Generalization of terms via universal algebra}

\author{Tommaso Flaminio and Sara Ugolini}
\address{IIIA -- Artificial Intelligence Research Institute,
 CSIC -- Spanish National Research Council. 
 Campus UAB s/n, Bellaterra 08193, Spain}
 \email{\texttt{\{tommaso, sara\}@iiia.csic.es}}
\date{}

\maketitle


\begin{abstract}
We provide a new foundational approach to the generalization of terms up to equational theories.  We interpret generalization problems  in a universal-algebraic setting making a key use of projective and exact algebras in the variety associated to the considered equational theory. We prove that the generality poset of a problem and its type (i.e., the cardinality of a complete set of least general solutions) can be studied in this algebraic setting. Moreover, we identify a class of varieties where the study of the generality poset can be fully reduced to the study of the congruence lattice of the 1-generated free algebra. We apply our results to varieties of algebras and to (algebraizable) logics. In particular we obtain several examples of unitary type: abelian groups; commutative monoids and commutative semigroups; all varieties whose 1-generated free algebra is trivial, e.g., lattices, semilattices, varieties without constants whose operations are idempotent; Boolean algebras, Kleene algebras, and G\"odel algebras, which are the equivalent algebraic semantics of, respectively, classical, 3-valued Kleene, and G\"odel-Dummett logic. Finally, we prove that the variety of MV-algebras, the equivalent algebraic semantics of \L ukasiewicz logic, has nullary type. 
\end{abstract}


\section{Introduction}
A term $s$ is  a {\em generalization} of a term $t$ if $t$ can be obtained from $s$ through a variable substitution. The problem of identifying common generalizations for two or more terms has been the focus of substantial research, initiated in a series of papers by Plotkin \cite{Plotkin}, Popplestone \cite{Popplestone}, and Reynolds \cite{Reynolds}, all collected in the same volume published in 1970.  The objective of these initial papers was to formalize an abstraction of the  process  of {\em inductive reasoning}. As a terminological aside, this abstraction  is named ``inductive generalization''  in \cite{Plotkin}, whilst more recent literature simply speaks of ``generalization processes'' (see e.g. \cite{CernaKutsia}). Here we conform to this latter usage. 
The main aim in this context is to find the solutions, i.e., the generalizing terms, that are {\em as close as possible} to the initial terms that define the problem. The cardinality of this set of ``best'' solutions is encoded in what is called the {\em generalization type}. 
In the literature,
the collection of methods and techniques  developed to  compute solutions to a generalization problem often goes under the name of {\em anti-unification}. This  terminology  suggests a connection between generalization and the arguably better-known {\em unification} problems, where one seeks common instantiations to pairs of given terms. 

The present paper provides a novel foundational approach to {\em equational} generalization, i.e., where terms are understood to be equivalent up to an equational theory. Our approach is based on universal algebra, which is a most natural environment to handle equational theories from the side of their classes of models, i.e., {\em varieties}. 

The extension to generalization up to equational theories has been  considered by several authors in theoretical computer science (see \cite{Baader,Burghardt,Pottier,Baumgartneretal,BN26}.  Most relevant results so far have been obtained with ad hoc techniques (see, e.g., results on semirings \cite{Cerna} and idempotent operations \cite{CernaKutsia}), and there is a growing interest in the general foundational aspects of it, as highlighted in the recent survey \cite{CernaKutsiasurvey}. 
We point out that our approach to equational theories is the usual one adopted in universal algebra, and as such it has a different flavor from the one that has been until now considered in the aforementioned  literature. In this work, we consider {\em arbitrary} equational theories over fixed languages, and terms are completely identified up to equational theories as elements of free algebras; in the literature, one often finds results about languages where one could have any number of operations, but they all satisfy the same equations such as associativity (A), commutativity (C), and their combination (AC). In this setting terms are thought of as being rewritten by means of such equations. This is a methodological difference more than it is a substantial one, but it does shift the presentation and offers a different perspective.

 The foundational approach proposed in the present paper is 
inspired by Ghilardi's algebraic setting for the study of equational unification problems \cite{G97}. This has been widely used in the literature since it allows the use of (universal) algebraic techniques \cite{AgUg1,AgUg2,Dzik08,GMU24,Slomcz12} and duality theoretic methods employing Priestley-like \cite{BC13,CM15,Cabrer16,Ghilardi04} and geometrical \cite{FU23,MS13,Ugolini} dualities. 
As we shall show also in the present paper, an advantage given by the universal algebraic perspective is that it brings to light the {\em invariants} of both the problems and the solutions: translating a problem to the algebraic setting allows to identify which problems share the same solutions, and which solutions are essentially equivalent up to the considered equational theory. 

In this manuscript we translate equational generalization  problems ({\em e-generaliza}\-{\em tion problems} from now on)  and their solutions to  homomorphisms between special classes of algebras, using in particular {\em projective} and {\em exact} algebras in the variety associated to the considered equational theory. Our first main result demonstrates how the generality  type can be studied in the fully algebraic setting (Theorem \ref{them:posetiso}, Corollary \ref{cor:type}). A key role in our analysis is played by 1-generated algebras in the considered variety. While being 1-generated is not a categorical notion, we prove that our results are preserved under categorical equivalences that preserve free algebras, called just {\em equivalences} in \cite{Mckenzie} (Theorem \ref{thm:equivalence}); moreover, categorical methods can and will be used to study e-generalization problems, an example of this being shown in the last section with the variety of Kleene algebras.

After developing the general theory, we take advantage of some basic universal-algebraic tools, and develop a methodology based on the study of the congruence lattice of the 1-generated free algebra in the considered variety; particularly, we identify a class of varieties where the study of the generality type can be fully reduced to the study of this congruence lattice (Theorem \ref{thm1ESP}). Finally, we identify a sufficient condition for problems in a variety to always have a best solution, i.e., {\em unitary type} (Corollary \ref{corFinal}).

With our methods we are able to tackle a variety of examples, coming both from algebra and {\em algebraizable} logics \cite{BP89,Font}. Particularly, we show that problems in the following varieties have unitary type: abelian groups, commutative semigroups and monoids (Example \ref{ex:typegroups}); all varieties whose 1-generated free algebra is trivial, e.g., lattices, semilattices, varieties without constants whose operations are idempotent (Example \ref{ex:typelattices}); Boolean algebras, Kleene algebras, and G\"odel algebras, which are the equivalent algebraic semantics of, respectively, classical, 3-valued Kleene, and G\"odel-Dummett logic (Section \ref{section:logic}). Finally, we provide an interesting example of nullary type coming from logic: the variety of MV-algebras, which is the equivalent algebraic semantics of \L ukasiewicz infinite-valued logic. This result is obtained using the well-known Marra-Spada duality between finitely presented MV-algebras and a category of rational polyhedra \cite{MS13}. 

The paper is organized as follows: after a section of preliminaries,  in Section \ref{section:algpresentation} we develop the general theory of algebraic e-generalization problems; in Section \ref{section:congruence} we  study the e-generalization type via the congruence lattice of the 1-generated free algebra in the considered variety; lastly, in Section \ref{section:logic} we apply our results to algebraizable logics.

\section{Preliminaries}
\subsection{Universal algebraic preliminaries: projectivity and exactness}\label{sec2}
For all the unexplained notions of universal algebra we refer to \cite{MMT2018}. 
 If $\rho$ is a type of algebras, a {\em $\rho$-equation} is a pair $(p,q)$ of $\rho$-terms  that we write suggestively as $p \app q$.  A set $\Sigma$ of equations over the same type will be said to be an {\em equational theory}. In what follows we  usually omit the type, and just consider equations assuming a fixed type and, whenever we consider a class of algebras $\vv K$, we assume that the algebras in $\vv K$ have the same type. 
 
 We denote by $\alg T_\rho(X)$ the absolutely free algebra of $\rho$-terms over a set $X$ of generators, and by $\alg F_\vv K(X)$ the free algebra for a class $\vv K$ over $X$. If $X = \{x\}$, we will often write $\alg F_\vv K(x)$ for $\alg F_\vv K(\{x\})$.
 
 Given any set of variables $X$, we call {\em assignment} of $X$ into an algebra $\alg A$ over the same type a function $h$ mapping each variable $x \in X$ to an element of $\alg A$,  extending (uniquely) to a homomorphism, which we also call $h$, from $\alg T_\rho(X)$ to $\alg A$.
An algebra $\alg A$ satisfies an equation $p \app q$ with an assignment $h$, and we write $\alg A, h \vDash p \approx q$, if $h(p) = h(q)$ in $\alg A$; $\alg A$ is a {\em model} of $p \app q$, and we write $\alg A \vDash p \approx q$, if for all assignments $h$ in $\alg A$, $\alg A, h \vDash p \approx q$. If $\Sigma$ is an equational theory then $\alg A$ is said to be a {\em model} of $\Sigma$, and we write $\alg A \vDash \Sigma$, if $\alg A \vDash \sigma$ for all $\sigma \in \Sigma$. Finally, a class of algebras $\vv K$ is a {\em class of models} of $\Sigma$, and we write $\vv K\models \Sigma$, if all algebras in $\vv K$ are a model of $\Sigma$.

 A class of algebras is a {\em variety} if it is the class of all models of an equational theory. Equivalently, varieties are those classes of algebras closed under the algebraic operations of taking homomorphic images, subalgebras, and direct products. As a consequence, varieties contain all of their free algebras.
 
  Fixed a variety $\vv V$, by a {\em substitution} $\sigma$ we mean an endomorphism of $\free_{\vv V}(\omega)$, the free algebra in $\vv V$ over a denumerable set of variables. Note that a substitution is fully determined by the values it assigns to each generating variable. Usually we consider substitutions that are different from the identity map only on a subset of variables $X$, mapping them to terms written over variables over a set $Y$; we highlight this fact by considering such a substitution as a homomorphism from  $\free_{\vv V}(X)$ to  $\free_{\vv V}(Y)$. 
 
We now introduce projective algebras, which play a key role in our investigation.
\begin{definition}
	Given a class of algebras $\vv K$, an algebra $\alg P \in \vv K$ is {\em projective in $\vv K$} if for all $\alg A,\alg B \in \vv K$, homomorphism $h:\alg P \to \alg B$, and surjective homomorphism $g: \alg A\to  \alg B$, there is a homomorphism $f: \alg P \to  \alg A$ such that $h=g\circ f$.
\end{definition}
While in general it is difficult to determine the algebras that are projective in a class $\vv K$, whenever $\vv K$ contains all of its free algebras (hence in particular for $\vv K$ being a variety) projectivity admits a more intuitive characterization. Let us call an algebra $\alg A$ a {\em retract} of an algebra $\alg F$ if there are homomorphisms $i: \alg A \to \alg F, j: \alg F \to \alg A$ such that $j \circ i = id_{\alg A}$ (and then necessarily $i$ is injective and $j$ is surjective); the following theorem was first proved by Whitman for lattices \cite{Whitman1941}, but it is well-known to hold in general.
\begin{theorem}\label{prop:proj-retract}
	If a class of algebras $\vv K$ contains all its free algebras, an algebra $\alg P$ is projective in $\vv K$ if and only if it is a retract of a free algebra in $\vv K$.
\end{theorem}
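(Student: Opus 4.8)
The plan is to prove the two implications separately, both by straightforward diagram chasing; the substance lies entirely in knowing where to place the free algebras. For the direction from right to left, suppose $\alg P$ is a retract of a free algebra $\alg F$ in $\vv K$, witnessed by $i : \alg P \to \alg F$ and $j : \alg F \to \alg P$ with $j \circ i = id_{\alg P}$. I would first record that every free algebra $\alg F = \alg F_\vv K(X)$ is itself projective in $\vv K$: given $h : \alg F \to \alg B$ and a surjection $g : \alg A \surj \alg B$, I choose for each generator $x \in X$ a preimage $a_x \in \alg A$ with $g(a_x) = h(x)$ (this is where the axiom of choice enters), and the universal property of $\alg F$ extends the assignment $x \mapsto a_x$ to a homomorphism $f : \alg F \to \alg A$; since $g \circ f$ and $h$ agree on the generating set $X$, they coincide, so $g \circ f = h$.

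I would then observe that a retract of a projective algebra is projective: given $h : \alg P \to \alg B$ and $g : \alg A \surj \alg B$, projectivity of $\alg F$ applied to $h \circ j : \alg F \to \alg B$ yields $f' : \alg F \to \alg A$ with $g \circ f' = h \circ j$, and setting $f = f' \circ i$ gives $g \circ f = h \circ j \circ i = h$. Hence $\alg P$ is projective. For the converse, assume $\alg P$ is projective in $\vv K$. Fix any generating set $Y$ of $\alg P$ — for instance the whole universe $P$ — and form the free algebra $\alg F_\vv K(Y)$, which belongs to $\vv K$ by hypothesis. The assignment sending each generator to the corresponding element of $\alg P$ extends to a homomorphism $g : \alg F_\vv K(Y) \to \alg P$, and $g$ is surjective precisely because $Y$ generates $\alg P$. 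Now apply the defining property of projectivity to the identity $id_{\alg P} : \alg P \to \alg P$ and the surjection $g$: there is a homomorphism $f : \alg P \to \alg F_\vv K(Y)$ with $g \circ f = id_{\alg P}$. Taking $i = f$ and $j = g$ exhibits $\alg P$ as a retract of the free algebra $\alg F_\vv K(Y)$.

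The proof is essentially routine, so the only real point to watch is the role of the hypothesis. Projectivity of $\alg P$ is a property relative to $\vv K$: the lifting must be produced only against surjections $g : \alg A \surj \alg B$ whose domain $\alg A$ lies in $\vv K$. In the converse direction the surjection we need to split has domain $\alg F_\vv K(Y)$, so we can invoke projectivity only because $\vv K$ is assumed to contain this free algebra; without that closure condition the argument breaks down, which is exactly why the statement is phrased for classes containing all their free algebras. The remaining subtlety is merely the standard, choice-based lifting of generators used to show that free algebras are projective.
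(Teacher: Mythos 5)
Your proof is correct. Note that the paper itself offers no proof of this theorem: it is stated as a classical result (first proved by Whitman for lattices and ``well-known to hold in general''), so there is no internal argument to compare against. What you give is the standard textbook proof, and it is complete: free algebras are projective by lifting generators (using choice and the universal property, which applies because the codomain lies in $\vv K$), retracts of projectives are projective by the composition $f = f' \circ i$, and conversely projectivity applied to the canonical surjection $\alg F_{\vv K}(P) \twoheadrightarrow \alg P$ splits it, which is exactly where the hypothesis that $\vv K$ contains its free algebras is needed --- a point you correctly isolate.
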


Thus, free algebras in a class $\vv K$ are clearly projective in $\vv K$. 
Since the above characterization of projective algebras in a variety will be extensively used along the paper, it is convenient to  graphically represent it as in the following diagram:
$$
\xymatrix{
 \alg P  \ar@(ul,dl)[]_{j \circ i = {\rm id}_{\alg P}} \ar@<0.6ex>[rr]^-i &&  \alg F_{\vv V}(Y) \ar@<0.6ex>[ll]^-j
}
$$
Given the above representation, and to highlight the role of the two mappings $i$ and $j$ that determine the retraction, we will henceforth say that the projective algebra $\alg P$ is an {\em $(i,j)$-retract of }$\alg F_{\vv V}(Y)$.

We now introduce {\em exactness}, a  weaker notion than projectivity relevant to this work. 
\begin{definition}
	Consider a variety $\vv V$; an algebra $\alg E \in \vv V$ is called {\em exact in $\vv V$} if it is isomorphic to a finitely generated subalgebra of some (finitely generated) free algebra.
\end{definition}
Evocatively, if we consider an exact algebra that is (isomorphic to) a subalgebra of a free algebra $\free_{\vv V}(X)$ generated by a term $t$, we write it as $\alg E_{\free_{\vv V}(X)}(t)$; whenever there is no danger of confusion, we write simply $\alg E(t)$.

Similarly to above, to highlight the role of an embedding $e$ that maps an exact algebra $\alg E$ into a free algebra $\free_{\vv V}(X)$, we will henceforth say that ${\bf E}$ is an {\em $e$-subalgebra of }$\free_{\vv V}(X)$.

Let us further observe that another way of thinking about exact algebras is to see them as images of substitutions from a finitely generated free algebra to another (and in fact, {\em exact} formulas are those that represent the theory of a substitution as in \cite{DJ1,DJ2,DJ3}).
By the characterization in Theorem \ref{prop:proj-retract}, every finitely generated projective algebra is exact, while the converse is not true in general. 

Projective and exact algebras in varieties, due to their close relation with free algebras, have shown to serve a key role in the algebraic study of problems coming from logic, particularly, in unification problems and the related notion of structural completeness (see, e.g., \cite{AgUg24, CM15a, G97}). This work shows their importance in the study of e-generalization problems as well.

In order to study the above notions, we often make use of congruences and the associated quotients. Generally, given an algebra $\alg A$, we write $\Con A$ for its set of congruences. Given a set $\Phi \sse \mathscr{P}(A \times A)$ of pairs of elements of $\alg A$, one can consider the congruence generated by $\Phi$ in $\alg A$, ${\rm Con}_{\alg A}(\Phi)$; if $\Phi = \{(x,y)\}$, and if there is no danger for confusion, we simply write $\langle x,y\rangle$ for ${\rm Con}_{\alg A}(\{(x,y)\})$. Given any $\theta \in  \Con A$, we write $n_\theta$ for the {\em natural epimorphism} $n_\theta: \alg A \to \alg A/ \theta$ mapping each $a \in A$ to its congruence class $a/\theta$.
Moreover, given any homomorphism $h: \alg A\to\alg B$ one can consider the congruence given by its {\em kernel}:
$$
\ker(h)=\{(a,a') \in A\times A: h(a)=h(a')\} \in \Con A.$$

We recall that varieties are closed under taking quotients (equivalently, homomorphic images) and every algebra $\alg A$ in a variety $\vv V$ is (isomorphic to) a quotient of some free algebra, say $\alg A \cong \free_{\vv V}(X)/\theta$ for some set of variables  $X$ and $\theta\in \Con{\free_{\vv V}(X)}$. We can then transfer the notions above from the algebras to the corresponding congruences.
\begin{definition}\label{congProj}
Let $\mathsf{V}$ be a variety. We say that a congruence $\theta$ of a free algebra $\free_{\mathsf{V}}(X)$ is {\em projective} (or {\em exact}) if  $\free_{\mathsf{V}}(X)/\theta$ is projective (or exact) in $\mathsf{V}$.
\end{definition}

The relation between congruences, homomorphic images, and quotients of algebras is a basic tool from universal algebra; the main facts are expressed in the so-called {\em Isomorphism Theorems} \cite[\S 4.2]{MMT2018}. We recall the First Isomorphism Theorem (often called just Homomorphism Theorem) and Second Isomorphism Theorem here for the reader's convenience, since they will be widely used in  proofs.
\begin{theorem}\label{thm:secondiso}
\begin{enumerate}
	\item[{\bf (1)}] (First Isomorphism Theorem) Let $h : \alg A \to \alg B$ be a homomorphism, and consider the natural epimorphism $n_\theta$ from $\alg A$ to $\alg A/ \theta$; the unique function $f: \alg A/ \theta \to \alg B$ such that $f \circ n_\theta = h$ is an embedding. Moreover, $f$ is an isomorphism if and only if $h$ is onto $\alg B$. 
	\item[{\bf (2)}] (Second Isomorphism Theorem) Let $f: \alg A \to \alg B$ and $g: \alg A \to \alg C$ be homomorphisms such that $\ker(f) \sse \ker(g)$ and $f$ is onto $\alg B$. Then there is a unique homomorphism $h: \alg B \to \alg  C$ such that $g= h \circ f$.
Moreover, $h$ is an embedding if and only if $\ker(f) =\ker(g)$.

\end{enumerate}
\end{theorem}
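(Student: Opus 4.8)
The plan is to handle both parts by the same device: define the required map on congruence classes (for part (1)) or on the surjective image (for part (2)) via a representative, use the hypothesis on kernels to check that the assignment is independent of the chosen representative, and then read off injectivity and surjectivity directly from the kernel conditions. In both parts the map is \emph{forced} by the factorization equation together with surjectivity of the map we are factoring through, so uniqueness is immediate and only well-definedness and the kernel bookkeeping carry content.

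For part (1) (where the relevant case is $\theta = \ker(h)$, since existence of $f$ needs $\theta \subseteq \ker(h)$ and the embedding claim needs equality), I would first note that $f \circ n_\theta = h$ together with $n_\theta$ being onto forces $f(a/\theta) = h(a)$, giving both the rule defining $f$ and its uniqueness. Well-definedness is exactly the inclusion $\theta \subseteq \ker(h)$: if $a/\theta = a'/\theta$ then $(a,a') \in \theta \subseteq \ker(h)$, so $h(a) = h(a')$. That $f$ is a homomorphism follows by unwinding, for each operation symbol $\omega$, the chain $f(\omega(a_1/\theta,\dots)) = h(\omega(a_1,\dots)) = \omega(h(a_1),\dots) = \omega(f(a_1/\theta),\dots)$, using that $n_\theta$ and $h$ are homomorphisms. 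Injectivity is the reverse implication $f(a/\theta) = f(a'/\theta) \Rightarrow (a,a') \in \ker(h) = \theta$, which makes $f$ an embedding; and since $n_\theta$ is onto, the image of $f$ equals the image of $h$, so $f$ is onto (hence an isomorphism) precisely when $h$ is.

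For part (2), I would define $h : \alg B \to \alg C$ by $h(b) = g(a)$ for any $a$ with $f(a) = b$, which exists because $f$ is onto. Well-definedness is precisely where $\ker(f) \subseteq \ker(g)$ enters: two preimages $a,a'$ of $b$ satisfy $(a,a') \in \ker(f) \subseteq \ker(g)$, hence $g(a) = g(a')$. The identity $g = h \circ f$ holds by construction, uniqueness follows again from surjectivity of $f$, and the homomorphism property of $h$ is inherited from those of $f$ and $g$. Finally, $h$ is injective exactly when $g(a) = g(a')$ forces $f(a) = f(a')$, i.e. when $\ker(g) \subseteq \ker(f)$; combined with the standing hypothesis $\ker(f) \subseteq \ker(g)$ this is equivalent to $\ker(f) = \ker(g)$.

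Since both arguments are elementary and every map is uniquely determined by the factorization demanded, there is no genuine obstacle. The one point requiring care is to keep straight which kernel inclusion does which job: an inclusion $\subseteq$ is what guarantees well-definedness, while the \emph{reverse} inclusion is what guarantees injectivity, so that the ``embedding if and only if equality of kernels'' equivalence comes out in the correct direction.
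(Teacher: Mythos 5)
Your proof is correct and is the standard textbook argument; the paper gives no proof of this theorem at all, stating it only as a recalled standard result with a reference to \cite[\S 4.2]{MMT2018}, so the argument you reproduce is exactly what is intended. A point in your favor: you correctly identified that part (1) as literally stated is imprecise --- the claims hold only when $\theta = \ker(h)$ (the inclusion $\theta \subseteq \ker(h)$ is what existence and well-definedness of $f$ require, and equality is what the embedding claim requires) --- and your proof makes this implicit hypothesis explicit, with the kernel inclusions doing the right jobs in the right directions in both parts.
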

As a consequence of the Second Isomorphism Theorem one can actually see that given any variety $\vv V$, any assignment $h: \alg T_\rho(X) \to \alg A \in \vv V$ induces a homomorphism $\bar h: \free_{\vv V}(X) \to \alg A$; precisely, if $\bar x$ is the representative in $\free_{\vv V}(X)$ of a variable $x \in X$, $\bar h(\bar x) = h(x)$. Therefore, the validity of equations in a variety can be equivalently checked by considering assignments with domain over a free algebra $\free_{\vv V}(X)$ for some $X$ (instead of the absolutely free algebra $\alg T_\rho(X)$) to algebras in $\vv V$. We will use this fact without further notice, and just consider terms as elements of free algebras in the considered variety. Lastly, given a term $t$, we write $\Var(t)$ for the variables appearing in $t$.

\subsection{E-generalization problems}
Observe that given any equational theory $\Sigma$, we can  consider the associated class of models $\vv V_{\Sigma}$, which is a variety of algebras. Similarly, as recalled in the previous subsection, any variety $\vv V$ is exactly the class of models of some equational theory $\Sigma_{\vv V}$: $\Sigma = \Sigma_{\vv V_{\Sigma}}$ and $\vv V = \vv V_{\Sigma_{\vv V}}$. Moreover, given two terms $t, u$ over the same type, $t = u$ up to $\Sigma$ if and only if $\vv V \models t \approx u$. Therefore one can use equational theories and varieties interchangeably; we will henceforth focus on varieties with no loss of generality. 

The following definition of an e-generalization problem corresponds to the usual one used in the literature, rephrased in the (equivalent) context of varieties and their free algebras; we only observe that while in the literature e-generalization problems are often considered to be a pair of terms, we here consider the more general case of allowing a finite multiset of terms of any (finite) cardinality. The choice of considering {\em multi}sets of terms is to allow the  terms to be repeated up to the considered equational theory.
 
 \begin{definition}
	A {\em symbolic e-generalization problem} for a variety $\vv V$ is a finite multiset ${\bf t}$ of terms $\vuc{t}{m} \in \alg F_{\vv V}(X)$ for some finite set of variables $X$. A {\em solution} (or {\em generalizer}) is a term $s \in \alg F_{\vv V}(Y)$, with $Y = \Var(s)$, for which there exist substitutions $\vuc{\sigma}{m}$ such that $\vv V\models \sigma_k(s) \app t_k$ for all $k = 1, \ldots, m$. In this case we say that $s$ is {\em witnessed} (or {\em testified}) by  $\sigma_1,\ldots, \sigma_m$. 
\end{definition}

As already noticed by Plotkin for syntactic generalization \cite{Plotkin}, any symbolic generalization problem $\vuc{t}{m}$ always has a solution: 
a fresh variable $z$, testified by the obvious substitutions $\sigma_k(z) = t_k$ for $k = 1, \ldots, m$.  
This  is the {\em most general solution} for $\vuc{t}{m}$, in the sense that every other solution can be obtained from it by further substitution.
In this context the interesting solutions are the {\em least general} ones, that are as {\em close} as possible to the initial terms representing the problem. Let us make this precise.

Consider two terms over the same language $s$ and $u$; we say that $s$ is {\em less general} than $u$, and write 
\begin{equation}
	s \preceq u \mbox{, iff  there exists a substitution } \sigma \mbox{ such that } \sigma(u) = s.
\end{equation}
Let us then fix a problem $\bt \subseteq \alg F_{\vv V}(X)$  and let  $\mathscr{S}(\bt)$ be the set of its solutions. It is easy to see that $\preceq$ is a preorder on $\mathscr{S}(\bt)$.  With a slight abuse of notation we  denote by 
$(\mathscr{S}(\bt), \preceq)$ its associated poset of equally general solutions, that we call the {\em generality poset} of $\bt$. 

For a problem $\bt$, one is usually interested in determining a set of  minimal solutions in its generality poset, whether it exists, and its cardinality. This information is encoded in the {\em e-generalization type}, which intuitively gives the cardinality of a class of best (i.e., minimal) solutions. Let us be more precise. 

Given a poset $(P, \leq)$, we call a {\em minimal (resp. maximal) complete set} a subset $M \sse P$ such that:
\begin{enumerate}
	\item for every $p \in P$, there is an element of $a \in M$ such that $a \leq p$ (or $p \leq a$);
	\item no two distinct elements of $M$ are comparable with respect to the order.
\end{enumerate}
Given a symbolic e-generalization problem $\bt$, its {\em symbolic e-generalization type} is:
\begin{itemize}
	\item unitary: if $(\mathscr{S}(\bt), \preceq)$ has a minimal complete set of generalizers (or {\em mcsg}) of cardinality 1;
	\item finitary:  if $(\mathscr{S}(\bt), \preceq)$ has an mcsg of finite (greater than $1$) cardinality;
	\item infinitary: if $(\mathscr{S}(\bt), \preceq)$ has an mcsg of infinite (not finite) cardinality;
	\item nullary: if $(\mathscr{S}(\bt), \preceq)$ has no mcsg.
\end{itemize}
Given a variety $\vv V$, its symbolic e-generalization type is the worst possible type occurring among all its e-generalization problems, the best-to-worst order being: unitary $>$ finitary $>$ infinitary $>$ nullary.

In order to recover some of the results present in the literature, we also introduce the notion of {\em e-generalization $m$-type} for $\vv V$, for $m \in \mathbb{N} - \{0\}$, which is the worst possible type occurring among all the e-generalization problems for $\vv V$ where the cardinality of the multiset of terms is at most $m$. To conform to this notation, we will also call the e-generalization type of $\vv V$ the {\em e-generalization $\omega$-type of $\vv V$}.

Notice that if $m \leq n$, the e-generalization $m$-type of $\vv V$ is better or equal to the $n$-type of $\vv V$. Moreover: 
\begin{proposition}
	Let $\vv V$ be a variety. If $\vv V$ has unitary e-generalization $2$-type, then $\vv V$ has unitary e-generalization type.
\end{proposition}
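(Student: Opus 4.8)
The plan is to prove the statement by induction on the number $m$ of terms in the problem, using the unitary $2$-type as the inductive engine. The conceptual key is to read the existence of a cardinality-$1$ mcsg for a problem $\bt = \{t_1, \dots, t_m\}$ as the existence of a $\preceq$-minimum of $(\mathscr{S}(\bt), \preceq)$: a solution $w$ with $w \preceq s$ for every solution $s$. Unfolding the definitions, $s \in \mathscr{S}(\bt)$ precisely when $t_i \preceq s$ for every $i$, so such a minimum $w$ is exactly a least common $\preceq$-upper bound (a join) of $t_1, \dots, t_m$. Once phrased this way, the statement becomes the familiar fact that binary joins generate all finite joins, and the only real work is to verify that the relevant joins live inside the right solution sets.

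First I would dispose of the base cases: $m = 1$ is immediate, since $t_1$ itself is the $\preceq$-least element of $\mathscr{S}(\{t_1\})$ (witnessed by the identity substitution), and $m = 2$ is the hypothesis. For the inductive step, fix $\bt = \{t_1, \dots, t_m\}$ with $m \geq 3$ and assume the $(m-1)$-term sub-problem $\{t_1, \dots, t_{m-1}\}$ has a minimum solution $r$, which is a term over some finite set of variables and hence defines a legitimate problem. I would then form the $2$-term problem $\{r, t_m\}$, which by hypothesis admits a minimum solution $w$, and claim that $w$ is the minimum solution of $\bt$.

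The verification of the claim splits into two transitivity arguments. For membership $w \in \mathscr{S}(\bt)$: from $w \in \mathscr{S}(\{r, t_m\})$ we get $r \preceq w$ and $t_m \preceq w$; since $r$ solves $\{t_1, \dots, t_{m-1}\}$ we have $t_i \preceq r$ for $i \le m-1$, and transitivity of the preorder $\preceq$ yields $t_i \preceq w$ for all $i \le m$. For minimality: given any $s \in \mathscr{S}(\bt)$, the constraints $t_i \preceq s$ for $i \le m-1$ make $s$ a solution of $\{t_1, \dots, t_{m-1}\}$, so $r \preceq s$ by minimality of $r$; together with $t_m \preceq s$ this makes $s$ a solution of $\{r, t_m\}$, whence $w \preceq s$ by minimality of $w$. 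This gives a cardinality-$1$ mcsg for $\bt$, and the induction concludes that every finite problem is unitary, i.e., $\vv V$ has unitary ($\omega$-)type.

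I do not expect a serious obstacle here; the whole force of the argument comes from transitivity of $\preceq$. The one point demanding care is the reduction of the $m$-term problem to a $2$-term problem built from the intermediate generalizer $r$ rather than from the original terms: one must check that $\mathscr{S}(\{r, t_m\})$ and $\mathscr{S}(\bt)$ share the same minimum, which is exactly what the two transitivity computations above secure. A secondary bookkeeping point is to confirm that $r$ is an honest term over a finite variable set, so that $\{r, t_m\}$ is a well-formed e-generalization problem to which the $2$-type hypothesis genuinely applies.
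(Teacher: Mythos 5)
Your proof is correct and is essentially the paper's own argument: both reduce the $m$-term problem to iterated applications of the unitary $2$-type hypothesis and verify that the resulting folded solution is a $\preceq$-minimum of the full problem via the same two transitivity computations (any solution of the big problem solves the derived sub-problems, hence sits above the intermediate minima). The only difference is the shape of the recursion --- you add one term at a time via $\{t_1,\dots,t_{m-1}\}$ and then $\{r,t_m\}$, whereas the paper pairs the terms $\{t_1,t_2\},\{t_3,t_4\},\dots$ and halves repeatedly --- and this is immaterial to the argument.
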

\begin{proof}
	Suppose $\vv V$ has unitary e-generalization $2$-type, and consider a problem $\bt = \{t_1, \ldots, t_m\}$. If $m = 1$, it is clear that the e-generalization type of $\bt$ is unitary, since we can consider the completely equivalent problem $\{t_1, t_1\}$. Let now $m > 2$. We can find a least general solution to $\bt$ with the following procedure:
	\begin{enumerate}
		\item Partition the terms in $\bt$ into the 2-element sets $$\bt_{1,2}:=\{t_1, t_2\}, \ldots, \bt_{m-1,m}:=\{t_{m-1}, t_m\}$$ if $m$ is even, or, if $m$ is odd, consider the last multiset to be $\bt_{m,m} := \{t_{m}, t_m\}$. Let $s_1, \ldots, s_{\lceil m/2 \rceil}$ (where $\lceil n \rceil$ is the least integer greater than or equal to $n$) be the set of least general solutions to the above sets of problems, which exist since $\vv V$ has unitary e-generalization $2$-type.
		\item Repeat the above process on the set of solutions, until there is only one solution, call it $s$. 
	\end{enumerate}
Note that the procedure in point (1) is done $\lceil m/2 \rceil$ times. While it is clear that $s$ is a solution for $\bt$, we argue that $s$ is minimal, i.e., it is  a least general solution for $\bt$. Indeed, consider another solution $s'$. Then $s'$ is also a solution for all the pairs in the first step of the procedure, $\bt_{1,2}, \ldots, \bt_{m-1,m}$ (or $\bt_{m,m}$). Thus $s_1, \ldots, s_{\lceil m/2 \rceil}$ are all less general than $s'$, i.e., there are substitutions $\sigma_1, \ldots, \sigma_{\lceil m/2 \rceil}$ such that $\sigma_k(s') = s_k$ for $k = 1, \ldots, \lceil m/2 \rceil$. This means that $s'$ is also a solution for all the pairs considered in the second iteration of the procedure. Applying the same reasoning at each repetition, we finally get that $s'$ is a solution also to the last pair of terms considered, whose least general generalizer is $s$. Thus, $s \preceq s'$ and the proof is complete. 
\end{proof}
Note that the same reasoning cannot be applied if $\vv V$ has, for instance, finitary e-generalization $2$-type to conclude that it has finitary e-generalization type, since the procedure described in the proof above might not terminate. 
We leave the question whether the e-generalization type coincides with the 2-type for every variety open.

One last observation before we conclude the preliminary section.
\begin{remark}\label{rem:one}
	For the sake of the reader more familiar with unification problems, it is worth stressing that both the solutions to e-generalization problems and their order have  a different behavior than those of unification theory. Indeed, the solutions to unification problem are {\em substitutions} that unify (possibly up to some equational theory) all the pairs of terms considered in the problem. Here solutions are identified as terms, while substitutions only serve as witnesses. Importantly, note that one could have two different solutions $s, s'$ to some e-generalization problem ${\bf t} = \{t_1, \ldots, t_m\}$, witnessed by the same substitutions $\sigma_1, \ldots, \sigma_m$, i.e., $\sigma_k(s) = \sigma_k(s') = t_k$ for $k = 1, \ldots, m$, but $s$ and $s'$ could be incomparable in the generality order. Indeed,  there might be no substitution mapping $s$ to $s'$ or vice versa. This observation is crucial to gain a deeper understanding of the algebraic approach we put forward in this work.
	\end{remark}
\section{An algebraic presentation of e-generalization}\label{section:algpresentation}
In this section we present a novel universal-algebraic approach to e-generalization problems. 
The  starting intuition is that, given a symbolic problem $t_1, \ldots, t_m$, each term defines a 1-generated subalgebra of some free algebra, i.e., a 1-generated exact algebra $\alg E(t_k)$. As we hinted at in Remark \ref{rem:one}, it is necessary for the study of the generalization order to fix the specific term $t_k$, for each $k = 1, \ldots, m$, and not just the exact algebra it generates (which could have different generators).
The idea that makes our translation work is to see the terms $t_1, \ldots, t_m$ as a {\em single element} $(t_1, \ldots, t_m)$ of the direct product of the exact algebras $\alg E(t_k)$; it will be convenient to represent this tuple as the image of a fresh variable $z$ via some homomorphism $h$ on the 1-generated algebra in the associated variety, $\free_{\vv V}(z)$. Then, each term $t_k$ can be recovered as $p_k \circ h(z)$, where $p_k: \prod_{k= 1}^m \alg E(t_k) \to \alg E(t_k)$ is the $k$-th projection. 

Hence, the tuple of terms can be substituted by a tuple of surjective homomorphisms $h_k: \free_{\vv V}(z) \to \alg E(t_k)$, or, equivalently and more compactly, by a homomorphism $h: \free_{\vv V}(z) \longrightarrow \prod_{k= 1}^m \alg E(t_k)$ such that $p_k \circ h$ is surjective on $\alg E(t_k)$.
The next definition translates this intuition in more abstract algebraic terms.

\begin{definition}\label{def:functunif}
	We call an {\em  algebraic e-generalization problem} for a variety $\vv V$  a homomorphism $h: \free_{\vv V}(z) \longrightarrow \prod_{k= 1}^m \alg E_k$ for some $m \geq 1$, where each $\alg E_k$ is a 1-generated exact algebra in $\vv V$, and $p_k \circ h$ is surjective on $\alg E_k$:
		 $$
\xymatrix{
\free_{\vv V}(z)\ar@{->>}[drr]_-{p_k \circ h}\ar[rr]^-h&& \displaystyle\prod_{k = 1}^m \alg E_k \ar@{->>}[d]^{p_k}\\
&& \alg E_k  \\
}
$$
	
	A {\em solution} (or {\em generalizer}) for $h$ is any homomorphism $g: \free_{\vv V}(z) \longrightarrow \alg P$, where $\alg P$ is finitely generated and projective in $\vv V$, for which there exists a homomorphism $f: \alg P \longrightarrow \prod_{k = 1}^m \alg E_k$ such that $f \circ g = h$, as illustrated in the following diagram:
	 $$
\xymatrix{
\free_{\vv V}(z)\ar[d]_-g\ar[rr]^-h&& \displaystyle\prod_{k = 1}^m \alg E_k\\
 \alg P   \ar[urr]_-{f} & \\
}
$$
We say that $f$ {\em witnesses} or {\em testifies} the solution $g$.
	\end{definition}
 The reader shall notice that, as a consequence of the definition, the algebras $\alg E_k$ in the definition of an algebraic problem are (1-)generated by the element $p_k \circ h(z)$.

 \begin{remark}\label{remark:equivalentpresentation}
 	We observe that an equivalent presentation of an algebraic problem would be to consider a tuple $\{h_k\}_{k = 1}^m$ where each $h_k: \free_{\vv V}(z) \to \alg E_k$ is a surjective homomorphism to a 1-generated exact algebra. With respect to this presentation, a solution for $\{h_k\}_{k = 1}^m$ is a homomorphism $g: \free_{\vv V}(z) \longrightarrow \alg P$, where $\alg P$ is finitely generated and projective in $\vv V$, for which there exists a family of homomorphisms  $\{f_k\}_{k = 1}^m$ with $f_k: \alg P \to \alg E_k$ such that $f_k \circ g = h_k$. We picked the above more compact presentation, as it will make clearer some of the general results shown in the next section. Nonetheless, it is useful to keep in mind this different but equivalent perspective. We will make use of it in one of the applications, namely the case of \L ukasiewicz logic, in Section \ref{sec:luk}. 
 \end{remark}

	One can also interpret the generality order among solutions in this alternative setting. 
Consider two homomorphisms with the same domain, $g: \alg A \to \alg B$ and $g': \alg A \to \alg B'$. We say that $g$ is {\em less general} than $g'$ and we write
\begin{equation}\label{eqOrderAlgSol}
	g \sqsubseteq g' \mbox{ if and only if there exists } l: \alg B' \to \alg B \mbox{ such that } l \circ g' = g,
\end{equation}
as in the following diagram:
$$
\xymatrix{
\alg A \ar[d]_-{g'}\ar[drr]^-{g}&&  \\
 \alg B' \ar[rr]_-l& & \alg B
 }
$$

Let us fix an algebraic problem $h: \free_{\vv V}(z) \to \prod_{k = 1}^m \alg E_k$. 
Given two generalizers $g: \free_{\vv V}(z) \to \alg P$, $g': \free_{\vv V}(z) \to \alg P'$, we hence have that $g \sqsubseteq g'$ if and only if there exists $f: \alg P' \to \alg P$  such that $f \circ g' = g$. The relation $\sqsubseteq$ is easily checked to be a preorder on the set of generalizers for $h$. We write 
\begin{equation}
	(\mathscr{A}(h), \sqsubseteq).
\end{equation}
 for the corresponding poset of equally general generalizers. 
 
The {\em algebraic e-generalization type} of a problem is then given in complete analogy with the symbolic case, by checking the cardinality of a minimal complete set of solutions; similarly, one can define the algebraic e-generalization $m$-type of a variety as the worst possible type of its problems of the kind $h: \free_{\vv V}(z) \to \prod_{k = 1}^m \alg E_k$ for any $\omega > m \geq 1$. Recall that we call the {\em $\omega$-type} also just {\em type}, and this corresponds to considering all possible problems of any (finite) cardinality.

We will prove that the algebraic and symbolic e-generalization type for a variety coincide. Let us start by
showing how to translate back and forth between symbolic and  algebraic problems and solutions. 
\begin{definition}\label{def:symbalg}
	Let $\bt=\{t_1,\ldots, t_m\} \subseteq \alg F_{\vv V}(X)$ be a symbolic e-generalization problem for a variety $\vv V$, and $s \in \alg F_{\vv V}(Y)$ be a solution. Let us define $\Alg (\bt)$ and $\Alg(s)$ as the (unique) homomorphisms extending the following assignments: 
	\begin{eqnarray*}
		\Alg (\bt): z\in \free_{\vv V}(z) &\longmapsto& (t_1,\ldots, t_m)\in \prod_{k = 1}^m \alg E(t_k),\\
	 \Alg(s) :  z\in \free_{\vv V}(z) &\longmapsto& s\in \free_{\vv V}(Y).
	\end{eqnarray*}
\end{definition}
For every symbolic e-generalization problem $\bt$, $\Alg(\bt)$  is an algebraic e-generalization problem directly by Definition \ref{def:functunif}.  In particular, notice that for all $k=1,\ldots, m$, $p_k\circ \Alg(\bt):z\mapsto t_k$; $z$ generates $\free_{\vv V}(z)$ and $t_k$ generates $\alg E(t_k)$ and hence $p_k\circ \Alg(\bt)$ is surjective.
The next lemma proves that, if $s$ is a solution for $\bt$, $\Alg(s)$ is a solution to $\Alg(\bt)$. 
\begin{lemma}\label{lemma:algsol}
Let $\bt \sse \free_{\vv V}(X)$ be a symbolic e-generalization problem with a solution $s \in \alg F_{\vv V}(Y)$. Then
	$\Alg(s) $ is a solution of $\Alg (\bt)$.
\end{lemma}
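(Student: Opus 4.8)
The plan is to read a witnessing homomorphism off the symbolic witnesses of $s$ and then verify the two requirements of Definition \ref{def:functunif}. Recall that, since $s$ solves $\bt$, there are substitutions $\sigma_1,\dots,\sigma_m$, which I view as homomorphisms $\sigma_k\colon\free_{\vv V}(Y)\to\free_{\vv V}(X)$, with $\vv V\models\sigma_k(s)\app t_k$; since the latter is an identity in the free algebra, this means $\sigma_k(s)=t_k$ in $\free_{\vv V}(X)$. First I would check that the codomain $\free_{\vv V}(Y)$ of $\Alg(s)$ is an admissible target for an algebraic solution: as $Y=\Var(s)$ is finite, $\free_{\vv V}(Y)$ is a finitely generated free algebra, hence a retract of itself and so projective in $\vv V$ by Theorem \ref{prop:proj-retract}. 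Thus $\alg P:=\free_{\vv V}(Y)$ meets the projectivity clause.

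Next I would build the witness $f$. Pairing the $\sigma_k$ gives a homomorphism $(\sigma_1,\dots,\sigma_m)\colon\free_{\vv V}(Y)\to\prod_{k=1}^m\free_{\vv V}(X)$ sending $s\mapsto(\sigma_1(s),\dots,\sigma_m(s))=(t_1,\dots,t_m)$. Evaluating on the generator $z$ of $\free_{\vv V}(z)$ then yields $(\sigma_1,\dots,\sigma_m)\circ\Alg(s)\,(z)=(t_1,\dots,t_m)=\Alg(\bt)(z)$, and since $z$ generates $\free_{\vv V}(z)$ this already forces the required commutativity $f\circ\Alg(s)=\Alg(\bt)$ once $f$ is in hand.

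The step I expect to be the real obstacle is that for $f$ to be admissible it must land in $\prod_{k=1}^m\alg E(t_k)$, not merely in $\prod_{k=1}^m\free_{\vv V}(X)$. The tool I would use is that a homomorphism carries the subalgebra generated by an element onto the subalgebra generated by its image, so $\sigma_k$ maps the exact algebra $\alg E(s)$ generated by $s$ in $\free_{\vv V}(Y)$ onto $\alg E(\sigma_k(s))=\alg E(t_k)$; equivalently, one checks $\ker(\Alg(s))\sse\ker(\Alg(\bt))$ — if $\Alg(s)(p)=\Alg(s)(q)$ then applying each $\sigma_k$ gives $p(t_k)=\sigma_k(p(s))=\sigma_k(q(s))=q(t_k)$ — and factors $\Alg(\bt)$ through $\alg E(s)$ via the Second Isomorphism Theorem (Theorem \ref{thm:secondiso}). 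Either route shows that the witness, restricted to $\alg E(s)$, corestricts to $\prod_k\alg E(t_k)$; the delicate point on which I would concentrate the verification is extending this corestriction to the whole free codomain $\free_{\vv V}(Y)$ of $\Alg(s)$, i.e. guaranteeing that the values chosen on the generators of $\free_{\vv V}(Y)$ can be taken inside the exact factors $\alg E(t_k)$. This is precisely where the interplay between the projectivity of $\free_{\vv V}(Y)$ and the exactness of the $\alg E(t_k)$ has to be exploited, and it is the one inclusion I would not treat as routine.
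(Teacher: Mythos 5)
Your construction coincides with the paper's own proof up to the point you flag: the paper also takes $\alg P=\free_{\vv V}(Y)$, justifies its projectivity via Theorem \ref{prop:proj-retract}, defines the witness by $f(u)=(\sigma_1(u),\ldots,\sigma_m(u))$, and checks $f\circ\Alg(s)=\Alg(\bt)$ on the generator $z$. The single divergence is that the paper simply declares this $f$ to be a homomorphism into $\prod_{k=1}^m\alg E(t_k)$, with no argument that the values $\sigma_k(u)$ lie in the exact subalgebras $\alg E(t_k)$ --- which is exactly the step you single out as non-routine and leave open. So, judged as a proof, your proposal is incomplete; but it is incomplete at precisely the point that the paper's proof silently asserts.

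Moreover, your caution is vindicated: that step is not just non-routine, it is false in general, so the gap cannot be closed by any interplay of projectivity and exactness. Take $\vv V$ to be the variety of commutative semigroups, $\bt=\{t_1\}$ with $t_1=x\cdot x\cdot x=x^3\in\free_{\vv V}(x)$, and $s=t_1$ itself, which is a symbolic solution witnessed by the identity substitution. Then $\alg E(t_1)$ is the subalgebra $\{x^{3n}:n\geq 1\}$ of $\free_{\vv V}(x)$, and $\Alg(s):\free_{\vv V}(z)\to\free_{\vv V}(x)$ sends $z\mapsto x^3$. A witness would be a homomorphism $f:\free_{\vv V}(x)\to\alg E(t_1)$ with $f(x^3)=x^3$; but any such $f$ satisfies $f(x)=x^{3n}$ for some $n\geq 1$, hence $f(x^3)=x^{9n}\neq x^3$. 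Thus $\Alg(s)$ is not a solution of $\Alg(\bt)$ at all, even though here $\alg E(t_1)\cong\free_{\vv V}(z)$ is projective and $\free_{\vv V}(Y)$ is free. Your intermediate claims are correct --- $\sigma_k$ does carry the subalgebra generated by $s$ onto $\alg E(t_k)$, and $\Alg(\bt)$ does factor through it by the Second Isomorphism Theorem --- and it is exactly the extension of that corestricted map from the subalgebra generated by $s$ to all of $\free_{\vv V}(Y)$ which is impossible in this example (the same computation works in semigroups, monoids, groups, and their commutative subvarieties, several of which the paper itself treats). In short, Lemma \ref{lemma:algsol} as stated needs an additional hypothesis (for instance, that the witnessing substitutions can be chosen to map every variable of $s$ into the corresponding $\alg E(t_k)$), the paper's proof is defective at the very step you refused to take for granted, and the defect propagates to the well-definedness of the map $\Alg$ in Theorem \ref{them:posetiso}(1).
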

\begin{proof}
Let $\bt = \{t_1, \ldots, t_m\}\sse \free_{\vv V}(X)$, and $\Alg(\bt):\free_{\vv V}(z)\to \prod_{k=1}^m \alg E(t_k)$ be the algebraic e-generalization problem as in Definition \ref{def:symbalg}. Note that $\Alg(s)$ has as codomain $\alg F_{\vv V}(Y)$, which is free and hence projective in $\vv V$ (Theorem \ref{prop:proj-retract}).

Let us assume that the solution $s$ is testified by some substitutions $\sigma_1,\ldots, \sigma_m$. Then, the map $f: \alg F_{\vv V}(Y)\to \prod_{k=1}^m \alg E(t_k)$ defined by 
$$
f(u)=(\vuc{\sigma}{m})(u) = (\sigma_1(u), \ldots, \sigma_m(u))
$$
for all $u\in \alg F_{\vv V}(Y)$ testifies that $\Alg(s) $ is a solution of $\Alg (\bt)$. Indeed, $f \circ \Alg(s) = \Alg(\bt)$ given that they coincide on the generator of $\free_{\vv V}(z)$:
$$f \circ \Alg(s)(z) = f(s) = (\sigma_1(s), \ldots, \sigma_m(s)) = (t_1, \ldots, t_m) = \Alg(\bt)(z).\qedhere$$
\end{proof}

For the converse translation, recall that we say that $\alg E$ is an $e$-subalgebra of $\alg F$ if $e: \alg E \to \alg F$ is an embedding, and that we write $p_k: \prod_{k = 1}^m \alg E_k \to \alg E_k$ for the $k$-th projection.

\begin{definition}\label{def:algsymb}
Let $h: \free_{\vv V}(z) \to \prod_{k = 1}^m \alg E_k$ be an algebraic e-generalization problem, where each $\alg E_k$  is  an $e_k$-subalgebra of $\free_{\vv V}(X_k)$, and consider a solution  $g: \free_{\vv V}(z) \to \alg P$, where $\alg P$ is an $(i,j)$-retract of $\alg F_{\vv V}(Y)$. We define:
\begin{eqnarray*}
	\Sym(h) &=& \{t_1,\ldots, t_m\},\; \mbox{ with } t_k = e_k \circ p_k \circ h(z) \mbox{ for } k = 1, \ldots, m;\\
	\Sym(g) &=& (i\circ g)(z)\in \free_{\vv V}(Y).
\end{eqnarray*}
\end{definition}

It is immediate that $\Sym(h)$ is a symbolic e-generalization problem for any algebraic e-generalization problem $h$; note that $\Sym(h) = \{t_1,\ldots, t_m\}\sse \free_{\vv V}(X)$ where $X = \Var\{t_1,\ldots, t_m\}$. We now demonstrate that $\Sym(g)$ is solution for $\Sym(h)$.

\begin{lemma}\label{lemma:symsol}
Let $h: \free_{\vv V}(z) \to \prod_{k = 1}^m \alg E_k$ be an algebraic e-generalization problem, with $\alg E_k$ an $e_k$-subalgebra of $\free_{\vv V}(X_k)$,  
and let $g: \free_{\vv V}(z) \to \alg P$ be a solution of $h$, with $\alg P$ an $(i,j)$-retract of $\alg F_{\vv V}(Y)$. 
Then,  
	$\Sym(g)$ is a solution of $\Sym(h)$. 
\end{lemma}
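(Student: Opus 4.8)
The plan is to convert the algebraic witness $f$ of the solution $g$ into the substitutions demanded by the symbolic definition, using the retraction identity $j \circ i = \mathrm{id}_{\alg P}$ to cancel the embedding $i$ through which $\Sym(g)$ is defined.

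First I would record the available data. Since $g$ is a solution of $h$, there is a witnessing homomorphism $f : \alg P \to \prod_{k=1}^m \alg E_k$ with $f \circ g = h$. Writing $s = \Sym(g) = (i \circ g)(z) \in \free_{\vv V}(Y)$, the goal is to produce substitutions $\sigma_1, \ldots, \sigma_m$ with $\vv V \models \sigma_k(s) \app t_k$, where $t_k = e_k \circ p_k \circ h(z) \in \free_{\vv V}(X_k)$ by Definition \ref{def:algsymb}. (As usual one takes $Y = \Var(s)$ without loss of generality.)

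The key step is to define, for each $k = 1, \ldots, m$, the composite
$$\sigma_k := e_k \circ p_k \circ f \circ j : \free_{\vv V}(Y) \longrightarrow \free_{\vv V}(X_k),$$
which, being a homomorphism between free algebras, is a legitimate substitution in the sense of Section \ref{sec2}. The verification is then a short diagram chase:
$$\sigma_k(s) = e_k \circ p_k \circ f \circ j \circ i \circ g(z) = e_k \circ p_k \circ f \circ g(z) = e_k \circ p_k \circ h(z) = t_k,$$
where the second equality uses $j \circ i = \mathrm{id}_{\alg P}$ from the $(i,j)$-retract presentation of $\alg P$, and the third uses that $f$ testifies $g$, i.e. $f \circ g = h$. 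Since $\sigma_k(s) = t_k$ holds in the free algebra, we get $\vv V \models \sigma_k(s) \app t_k$, so $s = \Sym(g)$ is a solution of $\Sym(h)$ witnessed by $\sigma_1, \ldots, \sigma_m$.

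There is essentially no analytic content here: the argument is a purely categorical computation, dual to that of Lemma \ref{lemma:algsol}. The only point requiring care is the bookkeeping of domains and codomains — in particular, recognizing that $e_k \circ p_k \circ f \circ j$ really is a substitution (a homomorphism between free algebras), and that the embedding $i$ appearing in the definition of $\Sym(g)$ cancels precisely against the coretraction $j$ inserted in front of $f$. This cancellation, which is exactly why $\Sym(g)$ is built through $i$ while the witnessing substitutions are built through $j$, is the conceptual crux of the statement.
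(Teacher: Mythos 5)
Your proposal is correct and coincides with the paper's own proof: both define the witnessing substitutions as $\sigma_k = e_k \circ p_k \circ f \circ j$ and verify $\sigma_k(\Sym(g)) = t_k$ via the identical chain of equalities, cancelling $j \circ i = \mathrm{id}_{\alg P}$ and using $f \circ g = h$. No gaps.
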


\begin{proof}
Let $f: \alg P \to  \prod_{k = 1}^m \alg E_k$ be a witness for $g$ being a solution of $h$. 
In order to show that $\Sym(g)$ is a solution of $\Sym(h)$, consider the maps $\sigma_k=e_k \circ p_k \circ f \circ j$ as in the diagram below
 $$
\xymatrix{
\free_{\vv V}(z)\ar[d]_-g\ar[r]^-h& \displaystyle\prod_{k = 1}^m \alg E_k \ar[r]^-{e_k \circ p_k}& \free_{\vv V}(X_k)\\
 \alg P \ar@<-0.6ex>[d]_-i   \ar[ur]_-{f} && \\
 \alg F_{\vv V}(Y)  \ar@<-0.6ex>[u]_-j  \ar[uurr]_-{\sigma_k}& &
}
$$
	We need to show that $\sigma_k(\Sym(g)) = t_k$, i.e., that $\sigma_k(i(g(z))) = e_k \circ p_k \circ h(z)$ for all $k = 1, \ldots, m$, and indeed:
	$$\sigma_k(i(g(z))) = e_k \circ p_k \circ f \circ j \circ i \circ g(z) = e_k \circ p_k\circ f \circ g(z) = e_k \circ p_k \circ h(z),$$
	where in particular the second equality is given by the fact that $j \circ i = id_{\alg P}$ and the third follows from $f \circ g(z) =h$.
\end{proof}
Note that $\Sym(g)$ depends on the pair $(i,j)$ witnessing the fact that $\alg P$ is a retract  of $\alg F_{\vv V}(Y)$. 
However, the choice of maps comes at no cost with respect to the study of the generality poset. Indeed, for the generalizers,	 consider $g: \alg F_{\vv V}(z)\to \alg P$ and let $(i,j)$ and $(i',j')$ testify that $\alg P$ is a retract of, respectively, $\free_{\vv V}(Y)$ and $\free_{\vv V}(Y')$. Then one would obtain two distinct symbolic solutions, $i(g(z))$ and $i'(g(z))$; the following compositions define substitutions mapping one term to the other: 
	$$i' \circ j: i(g(z)) \mapsto i'(g(z)); \quad i \circ j': i'(g(z)) \mapsto i(g(z)).$$
	Thus, the two solutions are equally general.
	Moreover, note that the definition of $\Sym(h)$ depends on the choice of the embeddings $e_k$; nonetheless, this does not have an effect on the study of the generality poset of solutions, as Theorem \ref{them:posetiso}(2) clarifies.
	
We are now ready to prove that given a symbolic problem $\bt \subseteq \alg F_{\vv V}(X)$, its poset of symbolic solutions is isomorphic to the poset of algebraic solutions for 
$\Alg(\bt)$. 
\begin{theorem}\label{them:posetiso} The following hold:
\begin{enumerate}
	\item Given a symbolic  problem $\bt$, its poset of solutions $(\mathscr{S}(\bt), \preceq)$ is isomorphic to the poset of algebraic solutions of $\Alg(\bt)$, $(\mathscr{A}(\Alg(\bt)), \sqsubseteq)$.
	\item Given an algebraic problem $h : \free_{\vv V}(z)\to\prod_{k=1}^m \alg E_k$, and any  choice of embeddings $e_k: \alg E_k \to \alg F_{\vv V}(X_k)$, the poset of solutions $(\mathscr{A}(h), \sqsubseteq)$ is isomorphic to the poset of symbolic solution of $\Sym(h)$, $(\mathscr{S}(\Sym(h)), \preceq)$.
\end{enumerate}
\end{theorem}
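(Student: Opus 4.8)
The plan is to show that the two translations $\Alg$ and $\Sym$ introduced above descend to mutually inverse order-isomorphisms between the two posets, handling Part (1) as the substantial case and then reducing Part (2) to it. First I would record the structural point that, since $\preceq$ and $\sqsubseteq$ are only preorders, the ``posets of (equally general) solutions'' are their quotients by mutual comparability; hence it suffices to prove that $\Alg$ and $\Sym$ are monotone maps that are mutually inverse up to equal generality, as this automatically induces reciprocal order-isomorphisms on the quotients. By Lemmas \ref{lemma:algsol} and \ref{lemma:symsol} these maps already go the right way, noting that $\Sym(\Alg(\bt))=\bt$ because $e_k\circ p_k\circ\Alg(\bt)(z)=t_k$; and the paragraph after Lemma \ref{lemma:symsol} tells us $\Sym$ is independent of the retraction data $(i,j)$, so it is well defined on the quotient.

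For monotonicity I would argue both directions by exhibiting the required witness. If $s\preceq u$ via a substitution $\tau$ with $\tau(u)=s$, then reading $\tau$ as a homomorphism between the relevant finitely generated free algebras gives $\tau\circ\Alg(u)=\Alg(s)$, so $\Alg(s)\sqsubseteq\Alg(u)$. Conversely, if $g\sqsubseteq g'$ via $f\colon\alg P'\to\alg P$ with $f\circ g'=g$, and $\alg P,\alg P'$ are $(i,j)$- and $(i',j')$-retracts of $\free_{\vv V}(Y),\free_{\vv V}(Y')$, then I would take the substitution $\tau:=i\circ f\circ j'$ and check, using $j'\circ i'=\mathrm{id}$ and $f\circ g'=g$, that $\tau(\Sym(g'))=\Sym(g)$, giving $\Sym(g)\preceq\Sym(g')$.

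Next I would verify the two round trips. Going $\Sym\circ\Alg$, since the codomain $\free_{\vv V}(Y)$ of $\Alg(s)$ is free it is a retract of itself via the identity, so choosing this retraction yields $\Sym(\Alg(s))=s$ exactly. Going $\Alg\circ\Sym$, for a solution $g$ with $\alg P$ an $(i,j)$-retract of $\free_{\vv V}(Y)$ I would set $s=\Sym(g)=i(g(z))$ and show that $i$ witnesses $\Alg(\Sym(g))\sqsubseteq g$ while $j$ witnesses $g\sqsubseteq\Alg(\Sym(g))$, so $g$ and $\Alg(\Sym(g))$ are equally general. Together with monotonicity, this establishes (1).

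For Part (2) the one genuine subtlety — and the step I expect to be the main obstacle — is that $\Alg(\Sym(h))$ is not literally $h$. Using the surjectivity condition of Definition \ref{def:functunif}, $p_k\circ h(z)$ generates $\alg E_k$, so each $e_k$ restricts to an isomorphism $\hat e_k$ from $\alg E_k$ onto $\alg E(t_k)$, and $\Alg(\Sym(h))=\bigl(\prod_{k=1}^m\hat e_k\bigr)\circ h$ with the product map an isomorphism. I would then observe that $\sqsubseteq$ refers only to the projective codomains of the generalizers and to $\free_{\vv V}(z)$, and that post-composing a witness with this isomorphism (or its inverse) converts witnesses for $h$ into witnesses for $\Alg(\Sym(h))$ and conversely; hence the two problems have the very same set of solutions and the same order, i.e.\ $(\mathscr{A}(h),\sqsubseteq)=(\mathscr{A}(\Alg(\Sym(h))),\sqsubseteq)$. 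Part (2) then follows by applying (1) to the symbolic problem $\Sym(h)$. The real work throughout is this bookkeeping: checking that the auxiliary choices (the embeddings $e_k$ and the retraction data entering $\Sym$) only change representatives and never the generality order, so that the asserted isomorphisms are independent of them.
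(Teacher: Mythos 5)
Your proposal is correct and follows essentially the same route as the paper: the paper likewise establishes (1) by showing $\Alg$ is an order-embedding and then proving surjectivity via exactly your round-trip construction ($s=i\circ g(z)$ with $i,j$ witnessing that $g$ and $\Alg(\Sym(g))$ are equally general), and it handles (2) by the same device of packaging the $e_k$ into an isomorphism $l$ between the codomains so that $h$ and $\Alg(\Sym(h))$ have identical solution posets, then invoking (1). Your reorganization into two mutually inverse monotone maps on the quotient posets, with $\Sym$'s monotonicity checked via $\tau=i\circ f\circ j'$, is only a cosmetic repackaging of the same computations.
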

\begin{proof}
As for (1), let $\bt = \{t_1, \ldots, t_m\} \sse \free_{\vv V}(X)$. We first observe that given two symbolic solutions $s \in \alg F_{\vv V}(Y), s'  \in \alg F_{\vv V}(Y')$, $$s \preceq s' \mbox{ iff } \Alg(s) \sqsubseteq \Alg(s').$$
	Indeed, $s \preceq s'$ if and only if there exist $\sigma: \alg F_{\vv V}(Y') \to \alg F_{\vv V}(Y)$ such that $\sigma(s') = s$ if and only if there exists a  homomorphism $\sigma$ from $\alg F_{\vv V}(Y')$ to $\alg F_{\vv V}(Y)$ such that $\sigma\circ\Alg(s')=\Alg(s)$, if and only if $\Alg(s) \sqsubseteq \Alg(s')$. 
From here it immediately follows that $\Alg$ can be regarded as a (well-defined) map from $(\mathscr{S}(\bt), \preceq)$ to $(\mathscr{A}(\Alg(\bt)), \sqsubseteq)$ that is injective and order-preserving.
	
	We will now demonstrate that $\Alg$ is surjective. Consider an algebraic solution $g: \free_{\vv V}(z) \to \alg P \in \mathscr{A}(\Alg(\bt))$. Then there exists a witnessing  homomorphism $f: \alg P \to  \prod_{k = 1}^m \alg E(t_k)$ such that $f\circ g=\Alg(\bt)$. Since $\alg P$ is projective, there is a free algebra $\alg F_{\vv V}(Z)$ and homomorphisms $j: \alg F_{\vv V}(Z) \to \alg P, i: \alg P \to \alg F_{\vv V}(Z)$ such that $j \circ i = id_{\alg P}$. Consider then $s = (i\circ g)(z) \in \alg F_{\vv V}(Z)$; we show that $s$ is a solution for the symbolic problem, witnessed by the substitutions $\sigma_k = p_k \circ f \circ j$ for $k = 1, \ldots, m$ where once again $p_k$ is the $k$-th projection. Indeed, 
	$$
	\begin{array}{lll}
	\sigma_k(s) &=& \sigma_k\circ i\circ g(z) \\
	&= &p_k\circ f \circ j \circ i \circ g(z)\\
	& = &p_k\circ f \circ g(z) \\
	&=&p_k (\Alg(\bt)(z))\\
	& =& p_k(\vuc{t}{m}) \\
	&=& t_k
	\end{array}
	$$
 and  in the third equality we use that $j \circ i = id_{\alg P}$. Thus $s = i \circ g(z) \in \mathscr{S}(\bt)$.  Finally, note that $\Alg(s): \alg F_{\vv V}(z)\to \alg F_{\vv V}(Z)$ is equally general to $g:\free_{\vv V}(z) \to \alg P$, as testified by the homomorphisms $i, j$ which are such that: $\Alg(s)=i\circ g$ (by the definition of $s = i \circ g (z)$) and $g=j \circ i \circ g = j\circ \Alg(s)$. Thus $\Alg$ is surjective and the first claim is proved.

	As for (2), consider an algebraic e-generalization problem $h: \free_{\vv V}(z)\to \prod_{k = 1}^m \alg E_k$, where $\alg E_k$ is an $e_k$-subalgebra of $\free_{\vv V}(X_k)$. By direct application of the definitions, note that $\Alg\circ \Sym (h): \free_{\vv V}(z) \to \prod_{k = 1}^m \alg E(t_k)$ is the homomorphism defined by mapping $z$ to $(t_1, \ldots, t_m)$ where $t_k = e_k \circ p_k\circ h(z)$ for $k = 1, \ldots, m$. Notice that the embeddings $e_k: \alg E_k \to \alg F_{\vv V}(X_k)$ can be seen as isomorphisms onto their image; to avoid confusion, let us denote these isomorphisms as $l_k: \alg E_k \to \alg E(t_k)$, mapping
	$p_k \circ h(z)$ to $t_k$; let the maps $l_k^{-1}: \alg E(t_k) \to \alg E_k$ be their inverses, mapping $t_k$ back to $p_k \circ h(z)$, for $k = 1, \ldots, m$. The associated maps on the direct products:
	\begin{eqnarray*}
		l:=(l_1 \circ p_1, \ldots, l_m \circ p_m)\;&:&\; \prod_{k = 1}^m \alg E_k \to \prod_{k = 1}^m \alg E(t_k),\\
		l^{-1}:= (l_1^{-1}\circ p_1, \ldots, l_m^{-1}\circ p_m)\;&:&\; \prod_{k = 1}^m \alg E(t_k) \to \prod_{k = 1}^m \alg E_k
	\end{eqnarray*} 
	define inverse isomorphisms, and are such that $l \circ h = \Alg\circ \Sym (h)$ and $l^{-1} \circ \Alg\circ \Sym (h) = h$. 

Indeed, note that 
$$
l\circ h(z)=(t_1,\ldots, t_m)=\Alg\circ\Sym(h)(z)
$$
 and 
 $$
 l^{-1}\circ \Alg\circ \Sym(h)(z)=l^{-1}(t_1,\ldots, t_m)=h(z).
$$

It follows that any $g: \alg F_{\vv V}(z) \to \alg P$ that is a solution to $h$ is also a solution to $\Alg\circ \Sym (h)$. Indeed, if $f$ witnesses $g$, and hence it is is such that $f \circ g = h$, then $l \circ f \circ g = l \circ h = \Alg\circ \Sym (h)$  and hence $l\circ f$ witnesses $g$ as solution of $\Alg\circ \Sym (h)$. Conversely, if $g': \alg F_{\vv V}\to \alg P'$ is solution to $\Alg\circ \Sym (h)$ witnessed by $f'$, then $l^{-1} \circ f'$ witnesses that $g'$ is solution to $h$ as well.

	Therefore, $(\mathscr{A}(h), \sqsubseteq)=(\mathscr{A}(\Alg\circ \Sym (h)), \sqsubseteq)$, the latter being isomorphic to $(\mathscr{S}(\Sym (h)), \preceq)$ as shown above.
\end{proof}
Therefore,  the poset of all solutions for a symbolic problem, and in particular the existence of a least general solution, can be studied algebraically. 
We can  then derive that the same holds also for the {\em e-generalization type} of a variety $\vv V$, which is the worst occurring type for its e-generalization problems. 
\begin{corollary}\label{cor:type}
	Given a variety $\vv V$, its symbolic and  algebraic e-generalization $\kappa$-types coincide, for any cardinal $\kappa \leq \omega$.
\end{corollary}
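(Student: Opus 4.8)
The plan is to deduce the Corollary directly from Theorem \ref{them:posetiso}, which already establishes a poset isomorphism between symbolic and algebraic solutions problem-by-problem. The key observation is that the $\kappa$-type of a variety is, by definition, the worst type occurring among all its generalization problems of cardinality at most $\kappa$ (with $\kappa = \omega$ meaning all finite cardinalities), and that the type of an individual problem is an \emph{invariant of its generality poset}: whether a poset has a minimal complete set, and the cardinality of such a set, depends only on the isomorphism type of the poset. Since Theorem \ref{them:posetiso} gives $(\mathscr{S}(\bt), \preceq) \cong (\mathscr{A}(\Alg(\bt)), \sqsubseteq)$ and, conversely, $(\mathscr{A}(h), \sqsubseteq) \cong (\mathscr{S}(\Sym(h)), \preceq)$, each symbolic problem has an algebraic counterpart with an isomorphic solution poset, and vice versa.

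First I would record the elementary fact that the four type-labels (unitary, finitary, infinitary, nullary) are preserved and reflected by poset isomorphisms: if $\phi: P \to P'$ is an order isomorphism, then $M \sse P$ is a minimal complete set if and only if $\phi(M)$ is, and $|M| = |\phi(M)|$. Hence two isomorphic generality posets yield the same type. Next I would verify that the translations $\Alg$ and $\Sym$ respect the cardinality bound $\kappa$: by Definition \ref{def:symbalg}, a symbolic problem $\bt$ of cardinality $m$ maps to an algebraic problem $\Alg(\bt): \free_{\vv V}(z) \to \prod_{k=1}^m \alg E(t_k)$ with exactly $m$ factors, and by Definition \ref{def:algsymb} an algebraic problem with $m$ factors maps to $\Sym(h)$, a symbolic problem of cardinality $m$. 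Thus the correspondence restricts to a bijection (up to poset isomorphism of solutions) between symbolic and algebraic problems of each fixed cardinality $m \le \kappa$.

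Combining these two ingredients, I would argue as follows. Fix $\kappa \le \omega$. For any symbolic problem $\bt$ with $|\bt| \le \kappa$, Theorem \ref{them:posetiso}(1) produces the algebraic problem $\Alg(\bt)$, of the same cardinality, with an isomorphic solution poset, hence of the same type; so every symbolic type of cardinality $\le \kappa$ occurs as an algebraic type of cardinality $\le \kappa$. Symmetrically, for any algebraic problem $h$ with $m \le \kappa$ factors, Theorem \ref{them:posetiso}(2) produces $\Sym(h)$, of cardinality $m \le \kappa$, with an isomorphic solution poset and hence the same type. Therefore the set of types realized by symbolic problems of cardinality $\le \kappa$ equals the set realized by algebraic problems of cardinality $\le \kappa$; taking the worst element of each set under the order unitary $>$ finitary $>$ infinitary $>$ nullary gives the equality of $\kappa$-types.

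The only genuinely delicate point — and the one I would flag as the main obstacle — is the bookkeeping around $\kappa = \omega$ and around the choices made in the translations. One must confirm that the ``worst type'' is computed over exactly the same index set of cardinalities on both sides, and that no algebraic problem of cardinality $\le \kappa$ escapes being matched by a symbolic one (and conversely); this is where the cardinality-preservation noted above is essential. A secondary subtlety is that $\Sym(h)$ depends on the choice of embeddings $e_k$, but Theorem \ref{them:posetiso}(2) is stated for \emph{any} such choice and yields an isomorphic poset regardless, so the type is independent of these choices and the argument is unaffected. With these verifications in place, the corollary follows without any further computation.
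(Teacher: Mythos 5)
Your proposal is correct and follows essentially the same route as the paper: the paper's own proof is a one-line appeal to Theorem \ref{them:posetiso}, and you simply make explicit the two facts it leaves implicit, namely that the type of a problem is invariant under isomorphism of generality posets and that the translations $\Alg$ and $\Sym$ preserve the cardinality of the problem. Your remark that the choice of embeddings $e_k$ is harmless is also consistent with the paper, which addresses this point in the discussion preceding Theorem \ref{them:posetiso}.
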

\begin{proof}
The above Theorem \ref{them:posetiso} demonstrates that given a symbolic e-generalization problem for $\vv V$, there is an algebraic one with the same type and vice versa, which implies the claim.
\end{proof}

	The reader familiar with the algebraic study of unification problems developed by Ghilardi in \cite{G97} will appreciate that our results have a similar flavor, but also some significant differences. Importantly, the study of unification problems and their type only involves homomorphisms between finitely presented and projective algebras in a variety; since these are all {\em categorical} notions, it directly follows that the unification type is preserved under categorical equivalence. With the approach outlined above, we do not get this same result, since we make a key use of 1-generated (free) algebras, and being 1-generated is not generally preserved by categorical equivalences.
	Nonetheless, the e-generalization type is preserved under a stronger notion of categorical equivalence, called {\em equivalence} in \cite{Mckenzie}, which is quite common among algebraic categories. Let us be more precise. For the unexplained notions of category theory we refer the reader to \cite{Maclane}.

	Given any class of algebras $\vv K$, one can consider the associated algebraic category, denoted by the same symbol, whose objects are the algebras in $\vv K$ and whose morphisms are the homomorphisms.  
	Two algebraic categories $\vv K$ and $\vv L$ are {\em equivalent} if they are categorically equivalent via a functor $\Gamma$ that preserves free algebras, in the sense that $\Gamma(\free_{\vv K}(X)) = \free_{\vv L}(X)$ for all $X$. 
If one considers in particular two varieties $\vv V$ and $\vv W$, these are equivalent if and only if their algebraic theories are isomorphic (from a category theoretical perspective), or if their clones are isomorphic (from a universal algebraic perspective), see \cite{Mckenzie} for a discussion.

All the notions involved in the definition of an algebraic e-generalization problem, its solutions, and the generality order, are preserved by equivalence; therefore we have the following result.
\begin{theorem}\label{thm:equivalence}
	The e-generalization $\kappa$-type of a variety is preserved by equivalence, for any cardinal $\kappa \leq \omega$.
\end{theorem}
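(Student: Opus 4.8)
The plan is to show that a category equivalence $\Gamma \colon \vv V \to \vv W$ in the sense of \cite{Mckenzie}, i.e.\ one satisfying $\Gamma(\free_{\vv V}(X)) = \free_{\vv W}(X)$ for every $X$, transports each algebraic e-generalization problem of $\vv V$ together with its entire poset of solutions to one of $\vv W$ of the same cardinality $m$, and conversely via a quasi-inverse; by Corollary \ref{cor:type} this suffices to equate the $\kappa$-types for all $\kappa \le \omega$. First I would record the categorical tools: being an equivalence, $\Gamma$ is full, faithful and essentially surjective, and consequently preserves and reflects monomorphisms (which in a variety are exactly the embeddings), regular epimorphisms (exactly the surjective homomorphisms), finite products, and retractions (a retraction $j \circ i = \mathrm{id}$ being a purely diagrammatic condition). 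The decisive extra ingredient, absent from the plain unification case, is that $\Gamma$ fixes free algebras on the nose; this is what lets me handle the two properties that are genuinely not categorical, namely being $1$-generated and being finitely generated.

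Second I would transport a problem. Let $h \colon \free_{\vv V}(z) \to \prod_{k=1}^m \alg E_k$ be an algebraic problem, with each $\alg E_k$ an $e_k$-subalgebra of $\free_{\vv V}(X_k)$ ($X_k$ finite) and each $p_k \circ h$ surjective. Applying $\Gamma$ and composing with the canonical coherence isomorphism $\Gamma(\prod_k \alg E_k) \cong \prod_k \Gamma(\alg E_k)$ (available since $\Gamma$ preserves finite products), I obtain a homomorphism $\free_{\vv W}(z) \to \prod_{k=1}^m \Gamma(\alg E_k)$, whose domain is correct because $\Gamma(\free_{\vv V}(z)) = \free_{\vv W}(z)$. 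Here I recover the non-categorical hypotheses diagrammatically: $\Gamma(\alg E_k)$ is $1$-generated because it is the surjective image of $\free_{\vv W}(z)$ under $\Gamma(p_k \circ h)$ (a regular epi, since $\Gamma$ preserves surjections), and it is exact because $\Gamma(e_k)$ is a monomorphism, hence an embedding, into $\Gamma(\free_{\vv V}(X_k)) = \free_{\vv W}(X_k)$ with $X_k$ finite. The same surjectivity shows the composite with the $k$-th projection is onto, so $\Gamma(h)$ is a bona fide algebraic problem of cardinality $m$ in $\vv W$. Solutions transport likewise: if $g \colon \free_{\vv V}(z) \to \alg P$ is a solution of $h$ witnessed by $f$, then $\alg P$, being finitely generated projective, is an $(i,j)$-retract of some $\free_{\vv V}(Y)$ with $Y$ finite (Theorem \ref{prop:proj-retract}), so $\Gamma(\alg P)$ is a $(\Gamma i, \Gamma j)$-retract of $\free_{\vv W}(Y)$, again finitely generated projective, and $\Gamma(f)$ (post-composed with the coherence iso) witnesses $\Gamma(g)$ as a solution of $\Gamma(h)$.

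Third I would establish the poset isomorphism. Since $\sqsubseteq$ is defined by the existence of a connecting homomorphism $\phi$ with $\phi \circ g' = g$, fullness and faithfulness of $\Gamma$ give at once that $g \sqsubseteq g'$ iff $\Gamma(g) \sqsubseteq \Gamma(g')$, so the assignment $g \mapsto \Gamma(g)$ descends to an order-embedding of $(\mathscr{A}(h), \sqsubseteq)$ into $(\mathscr{A}(\Gamma(h)), \sqsubseteq)$. For surjectivity I would invoke a quasi-inverse $\Delta$ (itself an equivalence preserving free algebras up to natural isomorphism): given a solution $g''$ of $\Gamma(h)$, the homomorphism $\Delta(g'')$, after precomposition with the natural isomorphism $\free_{\vv V}(z) \cong \Delta\Gamma(\free_{\vv V}(z))$, is a solution of a problem isomorphic to $h$, and mapping it back by $\Gamma$ returns something equally general to $g''$. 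The only mild technical point here is that the solution poset of an algebraic problem is unchanged when the problem is pre- and post-composed with isomorphisms, which follows directly from the definition of $\sqsubseteq$. Thus $(\mathscr{A}(h), \sqsubseteq) \cong (\mathscr{A}(\Gamma(h)), \sqsubseteq)$, and since $\Gamma$ preserves the number $m$ of factors and is a bijection up to isomorphism on problems, the algebraic $m$-type of $\vv V$ equals that of $\vv W$ for every $m$, hence also the $\omega$-type; by Corollary \ref{cor:type} the same holds for the symbolic $\kappa$-types.

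I expect the main obstacle to be exactly the treatment of the non-categorical predicates ``$1$-generated'' and ``finitely generated''. A naive argument would stall precisely because these are not preserved by arbitrary categorical equivalences; the resolution is to express them through maps into and out of distinguished free algebras ($\free_{\vv V}(z)$ and finite $\free_{\vv V}(Y)$) and to use that $\Gamma$ preserves free algebras on the nose, converting each into a diagrammatic condition that does transport. A secondary, purely bookkeeping nuisance is the coherence isomorphism for products and the natural isomorphisms $\Delta\Gamma \cong \mathrm{Id}$, $\Gamma\Delta \cong \mathrm{Id}$, which must be inserted consistently but never cause genuine difficulty.
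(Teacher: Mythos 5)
Your proposal is correct and takes essentially the same approach as the paper: the paper gives no detailed proof of Theorem \ref{thm:equivalence}, justifying it only by the preceding remark that all notions entering the definition of an algebraic e-generalization problem, its solutions, and the generality order are preserved by equivalence, and your argument is precisely a careful expansion of that remark. In particular, your key point---that the non-categorical predicates ``1-generated'' and ``finitely generated'' transport because they can be rephrased via surjections from, and retracts of, distinguished free algebras, which $\Gamma$ preserves on the nose---is exactly what the paper's notion of equivalence (in the sense of \cite{Mckenzie}) is designed to secure.
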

\begin{remark}\label{remark:freeinsteadofprojective}
We conclude this section with the following observation. One could have restricted the algebraic solutions in Definition \ref{def:functunif} to be homomorphisms among free algebras, $g: \alg F_{\vv V}(z) \to \alg F_{\vv V}(Y)$, for some finite set $Y$. This choice would yield equivalent results with respect to the study of the generality poset of solutions. Indeed, for the non-trivial side, consider a solution $g: \alg F_{\vv V}(z) \to \alg P$ for $\alg P$ finitely generated and projective. Then $\alg P$ is an $(i,j)$-retract of some finitely generated free algebra $\alg F_{\vv V}(Y)$. We claim that the composition $i \circ g: \alg F_{\vv V}(z) \to \alg F_{\vv V}(Y)$ is equally general to $g$. Indeed, while by definition of the order it follows directly that $i \circ g \sqsubseteq g$, notice that $g = j \circ (i \circ g)$ and hence $g \sqsubseteq i \circ g$.

However, the presentation involving projective algebras has several advantages. First, it allows one to study generalization problems in varieties where one does not have a handy description of free algebras, but instead has a clear characterization of projective ones. For instance, in some locally finite varieties the finitely generated projective algebras are all the finite algebras, but there is no general description of free algebras (see the example of {\em hoops} in Section \ref{sec:godel}); in Section \ref{sec:kleene}, we use a dualized representation of projective algebras to characterize the type of Kleene algebras.

Finally, another important observation is that projective algebras are preserved by categorical equivalences among algebraic categories, while free algebras are not. Therefore, we prefer to use the presentation of the solutions with projective algebras; the reader may keep in mind, however, that a presentation with free algebras alone would be equivalent.
\end{remark}

\section{A congruence-based approach to the poset of solutions}\label{section:congruence}
In this section we  exploit the insight given by the algebraic representation of  e-generalization problems and develop a study of the generality poset of solutions guided by the congruence lattice of the 1-generated free algebra in the considered variety. 

We start by characterizing the kernels of e-generalization problems and their solutions, and using them to study the poset of solutions. 
Then, we identify a class of varieties, later named {\em 1ESP varieties}, where  the study of the generality poset of solutions completely reduces to the study of projective congruences of the 1-generated free algebra. 
Finally, we develop an algebraic analysis of the generalization type, deriving some general results and a sufficient condition for a variety to have unitary type, i.e., finite direct products of 1-generated exact algebras being projective.

In particular, we will demonstrate that the following varieties have unitary e-generalization type: abelian groups, commutative semigroups and monoids, and all varieties whose 1-generated free algebra is trivial (e.g., lattices, semilattices, varieties without constants whose operations are idempotent).

\subsection{E-generalization problems via congruences}
Fix a variety $\vv V$ and an algebraic e-generalization problem $h: \free_{\vv V}(z) \to \prod_{k = 1}^m \alg E_k$; its kernel $\ker(h)$ is a congruence in the 1-generated free algebra $ \free_{\vv V}(z)$. The following lemma comes from a first easy observation, namely that the kernel of the problem gives an upper bound to the kernels of the solutions in the  congruence lattice of $ \free_{\vv V}(z)$.

\begin{lemma}\label{thm:trivialSol1}
Let  $h: \free_{\vv V}(z) \to \prod_{k = 1}^m \alg E_k$ be an algebraic e-generalization problem for a variety $\vv V$. If $g: \free_{\vv V}(z)  \to \alg P$ is a solution, then $\ker(g)\subseteq \ker(h)$.
\end{lemma}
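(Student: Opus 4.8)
The plan is to exploit directly the factorization that comes packaged with the definition of a solution. By Definition \ref{def:functunif}, saying that $g: \free_{\vv V}(z) \to \alg P$ is a solution of $h$ means precisely that there is a witnessing homomorphism $f: \alg P \to \prod_{k=1}^m \alg E_k$ with $f \circ g = h$. The entire content of the lemma is the elementary observation that whenever a homomorphism factors through another on the left, the kernel of the inner map is contained in the kernel of the composite.

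Concretely, I would take an arbitrary pair $(a, a') \in \ker(g)$, so that $g(a) = g(a')$ by definition of the kernel. Applying the witness $f$ to both sides gives $f(g(a)) = f(g(a'))$, that is $(f \circ g)(a) = (f \circ g)(a')$. Since $f \circ g = h$, this reads $h(a) = h(a')$, which is exactly the statement that $(a, a') \in \ker(h)$. As $(a, a')$ was arbitrary, this yields $\ker(g) \subseteq \ker(h)$ as an inclusion of congruences on $\free_{\vv V}(z)$.

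There is essentially no obstacle here: the result is an immediate consequence of the defining factorization $f \circ g = h$, and no properties of projectivity, exactness, or the product structure of $\prod_{k=1}^m \alg E_k$ are actually needed beyond the existence of the witness $f$. The only point worth stating carefully is that both $\ker(g)$ and $\ker(h)$ live in the same congruence lattice $\Con{\free_{\vv V}(z)}$, so that the inclusion is meaningful as an inequality in $\Con{\free_{\vv V}(z)}$; this is automatic since $g$ and $h$ share the domain $\free_{\vv V}(z)$. This monotonicity of kernels under postcomposition is exactly what makes $\ker(h)$ an upper bound for the kernels of all solutions, setting up the congruence-based analysis that follows.
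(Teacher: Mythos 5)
Your proposal is correct and takes exactly the same route as the paper: the paper's proof is the one-line observation that the witnessing homomorphism $f$ with $f \circ g = h$ immediately forces $\ker(g) \subseteq \ker(h)$, which is precisely the factorization argument you spell out elementwise. Your version just makes explicit the elementary kernel computation that the paper leaves implicit.
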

\begin{proof}
The claim simply follows from the fact that if $g: \free_{\vv V}(z)  \to \alg P$ is a solution, there is a testifying homomorphism $f: \alg P \to \prod_{k = 1}^m \alg E_k$ such that $f \circ g = h$.
\end{proof}

We are now going to characterize which congruences of the 1-generated free algebra can appear as $\ker(h)$ for an e-generalization problem given by a homomorphism $h$, but first we need a technical lemma.

\begin{lemma}\label{lemma:charcong}
Let $\vv V$ be any variety. Let $h: \alg A \to \prod_{k = 1}^m \alg B_k$ be a homomorphism, and consider $h_k= p_k \circ h$ for $k = 1, \ldots, m$, where $p_k$ is the $k$-th projection. Then $\ker(h) = \bigcap_{k = 1}^m \ker(h_k)$.
\end{lemma}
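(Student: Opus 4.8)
The plan is to prove the set equality $\ker(h) = \bigcap_{k=1}^m \ker(h_k)$ directly by showing mutual inclusion, or more efficiently by a single chain of biconditionals. The key observation is that a homomorphism into a direct product is completely determined by its coordinate projections: for any $a \in \alg A$, the element $h(a) \in \prod_{k=1}^m \alg B_k$ is precisely the tuple $(h_1(a), \ldots, h_m(a))$, since $h_k = p_k \circ h$ by definition. Equality of two tuples in a direct product holds if and only if they agree in every coordinate.

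First I would fix an arbitrary pair $(a, a') \in A \times A$ and unwind the definition of $\ker(h)$. We have $(a,a') \in \ker(h)$ if and only if $h(a) = h(a')$ in $\prod_{k=1}^m \alg B_k$. Using the componentwise description of equality in a direct product, this holds if and only if $p_k(h(a)) = p_k(h(a'))$ for every $k = 1, \ldots, m$, that is, if and only if $h_k(a) = h_k(a')$ for all $k$. This last condition is exactly the statement that $(a,a') \in \ker(h_k)$ for every $k$, which by definition of intersection means $(a,a') \in \bigcap_{k=1}^m \ker(h_k)$. Chaining these equivalences gives $(a,a') \in \ker(h)$ if and only if $(a,a') \in \bigcap_{k=1}^m \ker(h_k)$, establishing the desired equality.

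There is essentially no obstacle here: the statement is a routine unpacking of the definition of the kernel together with the universal property (or rather the elementary componentwise behavior) of direct products. The only point requiring any care is to state explicitly that equality in the direct product $\prod_{k=1}^m \alg B_k$ is coordinatewise, so that the step $h(a) = h(a') \iff \forall k\; p_k(h(a)) = p_k(h(a'))$ is justified; this is immediate from the construction of the product algebra. I would present the argument compactly as a single display of equivalences rather than proving two separate inclusions.
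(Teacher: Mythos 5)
Your proof is correct and follows essentially the same approach as the paper's: both arguments reduce the claim to the fact that equality in the direct product $\prod_{k=1}^m \alg B_k$ is coordinatewise. The only (cosmetic) difference is packaging --- you present a single chain of biconditionals, whereas the paper proves the two inclusions separately, handling the reverse inclusion by contraposition.
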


\begin{proof}
	Since $h_k= p_k \circ h$, it directly follows that $\ker(h) \sse \ker(h_k)$ for all $k= 1,\ldots, m$, and then $\ker(h) \sse \bigcap_{k = 1}^m \ker(h_k)$. For the converse, let $a,a'\in A$ and assume that  $(a,a') \notin \ker(h)$, i.e., $h(a) \neq h(a')$. This implies that $h(a)$ and $h(a')$ differ in some factor, that is to say, $p_k\circ h(a)=h_k(a) \neq h_k(a')=p_k\circ h(a')$ for some $k\leq m$. Therefore $(a,a') \notin \bigcap_{k = 1}^m \ker(h_k)$, which shows that $\bigcap_{k = 1}^m  \ker(h_k) \sse \ker(h)$ and concludes the proof.
\end{proof}
We are now ready to characterize which congruences can appear as kernels of algebraic e-generalization problems; let us make this notion precise for future reference.
\begin{definition} Let $\vv V$ be a variety. 
	We call a congruence $\theta \in \Con{\free_{\vv V}(z)}$  an {\em E-congruence} if $\theta = \ker(h)$ for some e-generalization problem  $h: \free_{\vv V}(z) \to \prod_{k = 1}^m \alg E_k$.
\end{definition}

As a consequence of Lemma \ref{lemma:charcong} above, we obtain the following.

\begin{theorem}\label{propThetat}
For every $\theta\in\Con{\free_{\vv V}(z)}$, $\theta$ is an E-congruence if and only if there are exact congruences $\theta_1,\ldots,\theta_m\in \Con{\free_{\vv V}(z)}$ such that $\theta=\bigcap_{k=1}^m \theta_m$.
\end{theorem}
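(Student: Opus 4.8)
The plan is to prove both directions by unpacking the definition of an E-congruence and applying Lemma~\ref{lemma:charcong}. Recall that $\theta$ is an E-congruence exactly when $\theta = \ker(h)$ for some algebraic e-generalization problem $h\colon \free_{\vv V}(z) \to \prod_{k=1}^m \alg E_k$, where each $\alg E_k$ is a 1-generated exact algebra and $p_k \circ h$ is surjective onto $\alg E_k$. The key observation connecting this to exact congruences is that, by the First Isomorphism Theorem (Theorem~\ref{thm:secondiso}(1)), a surjective homomorphism $p_k \circ h\colon \free_{\vv V}(z) \to \alg E_k$ witnesses $\free_{\vv V}(z)/\ker(p_k\circ h) \cong \alg E_k$, so $\ker(p_k \circ h)$ is exact in the sense of Definition~\ref{congProj} precisely because $\alg E_k$ is exact and 1-generated.

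For the forward direction, **first I would** suppose $\theta = \ker(h)$ is an E-congruence. Setting $h_k := p_k \circ h$, Lemma~\ref{lemma:charcong} gives immediately that $\theta = \bigcap_{k=1}^m \ker(h_k)$. It then remains to check that each $\theta_k := \ker(h_k)$ is an exact congruence: since $p_k \circ h$ is surjective onto the 1-generated exact algebra $\alg E_k$, the First Isomorphism Theorem yields $\free_{\vv V}(z)/\theta_k \cong \alg E_k$, which is exact in $\vv V$, so $\theta_k$ is exact by definition. This gives the required decomposition.

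For the converse, **I would** start from a family of exact congruences $\theta_1,\ldots,\theta_m \in \Con{\free_{\vv V}(z)}$ with $\theta = \bigcap_{k=1}^m \theta_k$, and reconstruct a problem. For each $k$, exactness of $\theta_k$ means $\alg E_k := \free_{\vv V}(z)/\theta_k$ is (isomorphic to) a 1-generated exact algebra, generated by $z/\theta_k$; let $h_k := n_{\theta_k}\colon \free_{\vv V}(z) \to \alg E_k$ be the natural epimorphism, which is surjective with $\ker(h_k) = \theta_k$. Define $h := (h_1,\ldots,h_m)\colon \free_{\vv V}(z) \to \prod_{k=1}^m \alg E_k$ by $h(u) = (h_1(u),\ldots,h_m(u))$. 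By construction $p_k \circ h = h_k$ is surjective onto the 1-generated exact algebra $\alg E_k$, so $h$ is a genuine algebraic e-generalization problem per Definition~\ref{def:functunif}; and by Lemma~\ref{lemma:charcong}, $\ker(h) = \bigcap_{k=1}^m \ker(h_k) = \bigcap_{k=1}^m \theta_k = \theta$. Hence $\theta$ is an E-congruence.

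**The main subtlety** to watch is purely definitional rather than computational: one must verify that each $\alg E_k = \free_{\vv V}(z)/\theta_k$ really qualifies as a \emph{1-generated} exact algebra in the sense required by Definition~\ref{def:functunif}, i.e.\ that it is 1-generated and the surjection $p_k\circ h$ onto it is induced by the generator $z$. This is automatic here since $\free_{\vv V}(z)$ is itself 1-generated (by $z$) and quotients of a 1-generated algebra are 1-generated (generated by the image of $z$), and exactness of $\theta_k$ is exactly the hypothesis that the quotient is exact; so no real obstacle arises, but it is the step where the matching between ``exact congruence'' and ``1-generated exact algebra as a factor of the problem'' has to be made explicit.
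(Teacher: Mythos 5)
Your proposal is correct and follows essentially the same route as the paper: both directions rest on Lemma~\ref{lemma:charcong}, with the First Isomorphism Theorem giving $\free_{\vv V}(z)/\ker(p_k\circ h)\cong \alg E_k$ in the forward direction, and the tuple of natural epimorphisms $n_{\theta_k}$ assembling the required problem $h$ in the converse. Your explicit remark that quotients of the 1-generated algebra $\free_{\vv V}(z)$ are themselves 1-generated (so that each $\free_{\vv V}(z)/\theta_k$ legitimately serves as a factor $\alg E_k$ in Definition~\ref{def:functunif}) is a detail the paper leaves implicit, but it does not change the argument.
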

\begin{proof}
Let $\theta$ be an E-congruence and let $h:\free_{\vv V}(z)\to \prod_{k=1}^m \alg E_k$ be an algebraic e-generalization problem such that $\theta=\ker(h)$. By Lemma \ref{lemma:charcong}, $\ker(h) = \bigcap_{k = 1}^m \ker(h_k)$ where $h_k=p_k\circ h$ for all $k=1,\ldots, k$. Since $h_k: \free_{\vv V}(z)\to \alg E_k$ is onto, by the First Isomorphism Theorem we get that  $\free_{\vv V}(z)/ \ker(h_k)$ is isomorphic to the exact algebra $\alg E_k$, hence exact itself, and then $\ker(h_k)$ is exact by definition. Thus, the claim follows by taking $\theta_k=\ker(h_k)$ for all $k=1,\ldots,m$.

Conversely, if $\theta=\bigcap_{k=1}^m \theta_k$ for $\theta_k$ exact congruences,  by definition of exact congruence there are embeddings $e_k: \free_{\vv V}(z)/ \theta_k \to \alg F_{\vv V}(X)$ for some $X$ ($X$ can be taken to be the same for all $k$). Thus, letting $n_k: \free_{\vv V}(z)\to \free_{\vv V}(z)/ \theta_k$ be the natural epimorphism, one can define an e-generalization problem $h: \free_{\vv V}(z) \to \prod_{k = 1}^m \free_{\vv V}(z)/ \theta_k$ as $h(x) = (n_1(x), \ldots, n_m(x))$, since clearly each $p_k \circ h$ is onto $\free_{\vv V}(z)/ \theta_k$. Moreover: 
$$
\theta = \bigcap_{k=1}^m \theta_k =\bigcap_{k=1}^m \ker(n_k)=\bigcap_{k=1}^m \ker(p_k\circ h)=\ker(h),
$$ 
 using Lemma \ref{lemma:charcong} in the last identity. Therefore, $\theta$ is an E-congruence.
\end{proof}
In other words, a congruence $\theta$ of the 1-generated free algebra is the kernel of an algebraic e-generalization problem if and only if it is a finite intersection of exact congruences.

\subsection{Algebraic solutions via congruences}

Now  we turn to describing those congruences that are kernels of solutions of an algebraic problem.  
\begin{definition}
	Let  $h: \free_{\vv V}(z) \to \prod_{k = 1}^m \alg E_k$ be an algebraic e-generalization problem for a variety $\vv V$. We say that $\theta \in \Con{\free_{\vv V}(z)}$ is a {\em generalizing congruence for $h$} (or {\em G-congruence for $h$} for short) if $\theta = \ker(g)$ for some solution $g: \free_{\vv V}(z)  \to \alg P$. Moreover, we denote the set of G-congruences for $h$ by $\Gen(h)$.
\end{definition}
Note that $\mathscr{G}(h)$ is the image under the function $\ker$ of the set $\mathscr{A}(h)$, given by all generalizers for the e-generalization problem $h$: 
$$
\mathscr{G}(h)=\ker[\mathscr{A}(h)].
$$ 
Next theorem presents, respectively, necessary and sufficient conditions for a congruence to be a G-congruence of an e-generalization problem.  
\begin{theorem}\label{thm:generalizercong}
	Let  $h: \free_{\vv V}(z) \to \prod_{k = 1}^m \alg E_k$ be an algebraic e-generalization problem for a variety $\vv V$ and let  $\theta \in \Con{\free_{\vv V}(z)}$. The following hold:
\begin{enumerate}
\item If $\theta\in \Gen(h)$, then $\theta$ is exact and $\theta \sse \ker(h)$. 
\item If $\theta$ is projective and  $\theta \sse \ker(h)$, then $\theta\in \Gen(h)$.
\end{enumerate}
\end{theorem}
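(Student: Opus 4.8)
The plan is to treat the two implications separately, since they rely on different tools. Part (1) splits into the inclusion $\theta \sse \ker(h)$, which is nothing but Lemma \ref{thm:trivialSol1} applied to a solution $g$ with $\ker(g) = \theta$, together with the exactness of $\theta$, for which I would combine the First Isomorphism Theorem with the retract characterization of projectivity (Theorem \ref{prop:proj-retract}). Part (2) is then a direct application of the Second Isomorphism Theorem, the point being that the natural epimorphism onto $\free_{\vv V}(z)/\theta$ is itself the required solution.

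For the exactness in part (1), let $g: \free_{\vv V}(z) \to \alg P$ be a solution with $\ker(g) = \theta$. First I would observe that $\alg P$, being finitely generated and projective, is a retract of a \emph{finitely generated} free algebra $\free_{\vv V}(Y)$ with $Y$ finite: a surjection $\free_{\vv V}(Y) \surj \alg P$ onto a finite generating set of $\alg P$ splits by projectivity, yielding an embedding $i: \alg P \to \free_{\vv V}(Y)$. Composing, $i \circ g: \free_{\vv V}(z) \to \free_{\vv V}(Y)$ has kernel $\ker(i \circ g) = \ker(g) = \theta$, since $i$ is injective. By the First Isomorphism Theorem (Theorem \ref{thm:secondiso}(1)), $\free_{\vv V}(z)/\theta$ is isomorphic to the image of $i \circ g$, which is the subalgebra of $\free_{\vv V}(Y)$ generated by the single element $i(g(z))$. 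Thus $\free_{\vv V}(z)/\theta$ is isomorphic to a finitely generated subalgebra of a finitely generated free algebra, i.e.\ it is exact, so $\theta$ is an exact congruence.

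For part (2), assume $\theta$ is projective and $\theta \sse \ker(h)$. Set $\alg P := \free_{\vv V}(z)/\theta$; this is projective by hypothesis and $1$-generated by $z/\theta$, hence finitely generated, so it is an admissible codomain for a solution. Take $g := n_\theta: \free_{\vv V}(z) \to \alg P$, whose kernel is exactly $\theta$. Since $\ker(n_\theta) = \theta \sse \ker(h)$ and $n_\theta$ is onto, the Second Isomorphism Theorem (Theorem \ref{thm:secondiso}(2)) supplies a homomorphism $f: \alg P \to \prod_{k = 1}^m \alg E_k$ with $h = f \circ n_\theta = f \circ g$. This $f$ witnesses $g$ as a solution of $h$, and therefore $\theta = \ker(g) \in \Gen(h)$.

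The conceptual point, rather than a genuine technical obstacle, is the asymmetry between the two directions: part (1) only delivers exactness of $\theta$, whereas part (2) requires the stronger hypothesis of projectivity. The one delicate step is ensuring in part (1) that the ambient free algebra can be chosen finitely generated, so that the image really qualifies as exact under the stated definition; this is precisely where finite generation of $\alg P$ is used. Because projectivity strictly implies exactness but not conversely, the statement cannot in general be sharpened to a biconditional, which is exactly the gap later motivating the isolation of those varieties where the two notions coincide.
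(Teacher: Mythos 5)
Your proposal is correct and follows essentially the same route as the paper: part (1) combines Lemma \ref{thm:trivialSol1} with the First Isomorphism Theorem and the retract characterization of projectivity to embed $\free_{\vv V}(z)/\theta$ into a free algebra, and part (2) applies the Second Isomorphism Theorem to exhibit $n_\theta$ as a solution. Your extra care in checking that the ambient free algebra is finitely generated and that the image is $1$-generated is a welcome (if routine) sharpening of details the paper leaves implicit.
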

\begin{proof}
(1)	Suppose  that $\theta$ is a G-congruence for $h$. Thus, by definition, $\theta = \ker(g)$ for some solution $g: \free_{\vv V}(z)  \to \alg P$, and then by Lemma \ref{thm:trivialSol1}, $\theta = \ker(g) \sse \ker(h)$. Moreover,  $\free_{\vv V}(z)/\ker(g)$ embeds into $\alg P$ by direct application of  the First Isomorphism Theorem.  In turn,  $\alg P$  embeds into  some free algebra $\free_{\vv V}(Y)$. Therefore, $\ker(g)$ is exact.
\vspace{.1cm}
	
(2)	Suppose $\theta \in \Con{\free_{\vv V}(z)}$ is projective and $\theta \sse \ker(h)$. Then, $\free_{\vv V}(z)/\theta$ is projective and  by the Second Isomorphism Theorem (Theorem \ref{thm:secondiso}) 
 there is $f: \free_{\vv V}(z)/\theta \to \prod_{k = 1}^m \alg E_k $ such that $f\circ n_\theta=h$. Hence $f$ testifies that $n_\theta$ is a solution for $h$, and therefore $\theta = \ker(n_\theta)$ is a G-congruence for $h$.
\end{proof}

Of particular interest are then those varieties where the necessary and sufficient conditions in Theorem \ref{thm:generalizercong} collapse. This happens in particular whenever  every 1-generated exact algebra is projective; in this case we get the following characterization of G-congruences.
\begin{corollary}\label{cor:1EPchar}
	Let $\vv V$ be a variety such that every 1-generated exact algebra is projective. Then given any e-generalization problem $h$, $$\Gen(h) = \{\theta \in \Con{\free_{\vv V}(z)}: \theta \sse \ker(h), \theta \mbox{ projective in } \vv V\}.$$
\end{corollary}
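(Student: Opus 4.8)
The plan is to show that, under the hypothesis that every 1-generated exact algebra is projective, the two conditions appearing in Theorem \ref{thm:generalizercong} become equivalent to a single condition. The corollary asserts that $\Gen(h)$ equals the set of projective congruences contained in $\ker(h)$, so I need to prove the set equality by double inclusion, reusing the two implications of Theorem \ref{thm:generalizercong} together with the collapsing hypothesis.

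First I would prove the inclusion $\Gen(h) \sse \{\theta : \theta \sse \ker(h), \theta \text{ projective}\}$. Take any $\theta \in \Gen(h)$. By Theorem \ref{thm:generalizercong}(1), $\theta$ is exact and $\theta \sse \ker(h)$. The only gap is that (1) gives \emph{exact}, not \emph{projective}. Here the hypothesis enters: since $\theta$ is an exact congruence of $\free_{\vv V}(z)$, the quotient $\free_{\vv V}(z)/\theta$ is a 1-generated exact algebra (it is generated by the image of $z$), hence projective by assumption, and so $\theta$ is a projective congruence by Definition \ref{congProj}. Thus $\theta$ lies in the right-hand set.

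Second I would prove the reverse inclusion. Take any projective $\theta$ with $\theta \sse \ker(h)$. This is precisely the hypothesis of Theorem \ref{thm:generalizercong}(2), which immediately yields $\theta \in \Gen(h)$. Combining the two inclusions gives the claimed equality. I would note explicitly that the key point making this collapse work is that exactness of a congruence $\theta$ on the 1-generated free algebra $\free_{\vv V}(z)$ forces the quotient to be \emph{1-generated} and exact, so the blanket hypothesis ``every 1-generated exact algebra is projective'' applies directly; without restricting to the 1-generated free algebra this argument would not go through, which is exactly why $\free_{\vv V}(z)$ is the correct ambient object.

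I do not anticipate a serious obstacle here: the corollary is essentially a direct specialization of Theorem \ref{thm:generalizercong}, and the only substantive observation is the identification of the exact quotient $\free_{\vv V}(z)/\theta$ as a 1-generated exact algebra, so that the hypothesis upgrades ``exact'' to ``projective.'' The mildest subtlety worth stating is why the quotient is 1-generated, namely that $\free_{\vv V}(z)$ is generated by the single element $z$ and quotient maps preserve generation; this is routine but worth a sentence so the reader sees precisely where the 1-generatedness is used.
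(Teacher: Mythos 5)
Your proof is correct and follows exactly the route the paper intends: the corollary is stated there without proof as an immediate consequence of Theorem \ref{thm:generalizercong}, with the collapse of ``exact'' to ``projective'' justified precisely by the observation you make, namely that any exact congruence $\theta$ of $\free_{\vv V}(z)$ has a 1-generated exact quotient $\free_{\vv V}(z)/\theta$ (generated by $z/\theta$), to which the 1EP hypothesis applies. Your explicit remark about why the quotient is 1-generated is the only nontrivial detail, and you have it right.
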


Let us call the above property, namely that all 1-generated exact algebras are projective, {\em 1EP property}, and the varieties that satisfy it {\em 1EP varieties}. 1EP varieties seem to abound. 
They include all varieties where the 1-generated free algebra is trivial (e.g., lattices and semilattices), and moreover:
\begin{example}\label{exGruppi1}
Groups, abelian groups, monoids, semigroups, and their commutative subvarieties are easily seen to be 1EP varieties. 

Let us first consider the varieties of groups and abelian groups. The 1-generated free algebra is isomorphic to the additive group of the integers, $\alg Z=(\mathbb{Z}, +, -, 0)$ in both cases. Its quotients  are: $\alg Z$ itself; the trivial group $\alg 0$; for every $n$,  the finite cyclic groups $\alg Z_n$ of each order $n$. While clearly $\alg Z$ and $\alg 0$ are subalgebras of the 1-generated free algebra $\alg Z$, and hence they are exact, no finite  nontrivial group can be a subalgebra of a free group. Therefore, the exact quotients of $\alg Z$ are itself and $\alg 0$, which are both projective since they are free ($\alg 0$ is the 0-generated free (abelian) group). Thus both  groups and abelian groups are 1EP. 

Now, the 1-generated free algebra for (commutative) monoids is the additive monoid of the natural numbers $\alg N=(\mathbb{N}, +, 0)$, and the free (commutative) semigroup is the additive semigroup of positive natural numbers $\alg N^+=(\mathbb{N}-\{0\}, +)$.
Therefore, the situation is similar to the above: the 1-generated exact algebras in monoids and commutative monoids are the trivial algebra and the free algebra itself, which are both free hence projective; all other quotients are finite nontrivial algebras that cannot embed in a free monoid. For semigroups and their commutative subvariety, the trivial algebra is not exact, so the only exact (and projective) 1-generated algebra is the free algebra. Therefore, all these varieties are 1EP.
\hfill $\Box$ \end{example}
Moreover, many varieties coming from logic are 1EP; for instance, Heyting algebras (since all exact Heyting algebras are projective, \cite[Remark pp. 867-868]{G99}), and as we shall see in the last section, Boolean, G\"odel, and Kleene algebras. However, one can construct examples of varieties without the 1EP property; the following is due to Tommaso Moraschini  (private communication).

\begin{example} 
	Consider the monoid $\alg N_3 = ((0,1,2,3), \oplus, 0)$ where $\oplus$ is the addition between natural numbers truncated at $3$, and let $\vv{N}_3$ be the variety it generates. The 1-generated free algebra, $\free_{\vv{N}_3}(z)$ is isomorphic to $\alg N_3$ via the map sending $z$ to $1$. Consider the subalgebra $\alg S$ of $\alg N_3$ with domain $(0,2,3)$. This is a 1-generated exact algebra, since it embeds into a free algebra; moreover, it can be verified that $\alg S$ is not projective. Indeed, consider the surjective homomorphism $j: \alg N_3 \to \alg S$ defined by $j(1) = 2$. Then, if $\alg S$ were to be projective, it would have to exist an embedding $i: \alg S \to \alg N_3$ such that $j \circ i = {\rm id}_{\alg S}$. Since $j^{-1}(2) = \{1\}$, one would have to set $i(2) = 1$. But this assignment does not extend to a homomorphism since one would get  $$i(3) = i(2 \oplus 2) = i(2) \oplus i(2)  = 2 \neq 3 = i(2) \oplus i(2) \oplus i(2) =i(2 \oplus 2 \oplus 2) = i(3).$$
	Therefore  $\vv{N}_3$ is not 1EP. 
	It is interesting to further notice that, however, $\alg S$ is {\em weakly projective} in $\vv{N}_3$\footnote{We note that we were not able to find in the literature explicit examples of algebras that are weakly projective but not projective, hence this seemed worth observing. Weakly projective algebras are important in the study of primitive quasivarieties (see e.g., \cite[Prop. 5.1.24]{Gorbunov} or \cite[Theorem 4.11]{AgUg24}).}, i.e., whenever it is a homomorphic image of an algebra in the variety it is also (isomorphic to) a subalgebra. This is a consequence of the fact that every algebra in  $\vv{N}_3$ satisfies the identity $3x \approx 4x$ (where $nx$ is $x \oplus \ldots \oplus x$, $n$ times). Indeed, let $\alg A \in \vv{N}_3$ and $h$ a surjective homomorphism $h: \alg A \to \alg S$; consider $a \in A$ such that $h(a) = 2$. If $2a = 3a$ then the desired embedding $f: \alg S \to \alg A$ is defined by mapping $2$ to $a$, and otherwise, since necessarily $3a = 4a$, $f$ is defined by mapping $2$ to $2a$.  \hfill $\Box$
\end{example}
The above result indicates that in 1EP varieties one can fully characterize G-congruences. However, notice that this does not mean that we can characterize {\em solutions} via G-congruences. 
 Fix again an e-generalization problem $h$, and consider a homomorphism $g: \free_{\vv V}(z) \to \alg P$, with $\alg P$ projective; even if $\ker(g) \in \Gen(h)$, $g$ is not necessarily a solution. We will now identify a class of varieties where this always happens. 
 
 In order to build the necessary intuition, observe that by the First Isomorphism Theorem, every solution $g:\free_{\vv V}(z)\to \alg P$ to an algebraic problem $h$ can be  decomposed as $g = e \circ n_{\ker(g)}$, 
where $n_{\ker(g)}$ is the natural epimorphism for $\ker(g)$ and $e: \free_{\vv V}(z)/\ker(g) \to \alg P$ is an embedding. Recall that, by definition, $g$ is a solution to $h$ if and only if there exists $f$ such that $f \circ g = h$ as in Figure \ref{fig1ESP}.
\begin{figure}[h!]
\begin{center}
$$
\xymatrix{
\free_{\vv V}(z)\ar[rr]^{h}\ar@/_{3pc}/[ddd]_{g}\ar[d]^{n_{\ker(g)}}& &\displaystyle\prod_{k = 1}^m \alg E_k \\
\free_{\vv V}(z)/\ker(g) \ar@<-0.6ex>[dd]_-e \ar[rru]_-{f'}&&  \\
&&\\
\alg P \ar[rruuu]_{f} \ar@<-0.6ex>[uu]_-j& &
 }
$$
\end{center}
\caption{Solutions in 1EP varieties.}\label{fig1ESP}
\end{figure}
Note that, if the variety is 1EP,  $n_{\ker(g)}$ is a solution whenever $\ker(g)\in \Gen(h)$. Indeed,  $\free_{\vv V}(z)/\ker(g)$ is projective, and by the Second Isomorphism Theorem, there is $f'$ such that $f'\circ n_{\ker(g)}=h$ which testifies that $n_{\ker(g)}$ is a solution. Now, if there  exists $j:\alg P\to \free_{\vv V}(z)/\ker(g)$ that makes $\free_{\vv V}(z)/\ker(g) $ an $(e,j)$-retract of $\alg P$, then the composition $f'\circ j$ testifies that $g$ is a solution as well.  Thus, we identify a class of varieties where we can always perform the last step of the above reasoning, i.e., $\free_{\vv V}(z)/\ker(g)$ being an $(e,j)$-retract of $\alg P$ for some $j$.

\begin{definition}
We say that an algebra $\alg S$ in a variety $\vv V$  is {\em strongly projective} in  $\vv V$ if it is projective in $\vv V$, and whenever there is an embedding $i: \alg S\to \alg P$ for some projective algebra $\alg P$, there is a homomorphism $j:\alg P \to \alg S$ such that $j\circ i=id_{\alg S}$. 

We say that a variety $\vv V$ is 1ESP if all 1-generated exact algebras in $\vv V$ are strongly projective in $\vv V$.
\end{definition} 
 Observe that that 1ESP implies 1EP, while the two notions are distinct as the following example shows.

\begin{example}\label{exGruppi2}
Recall from Example \ref{exGruppi1} that (abelian) groups are 1EP. From the same example, the 1-generated exact algebras are $\alg Z$ and $\alg 0$. While $\alg 0$ can be easily seen to be strongly projective, $\alg Z$ is not. As a matter of fact, consider any embedding $i$ of $\alg Z$ into itself defined by mapping $1$ to some positive integer $n>1$. Notice that there cannot exist a homomorphism $j: \alg Z \to \alg Z$ such that $j\circ i=id_{\alg Z}$.  Indeed, take for instance $n=2$. Then, if such a $j$ existed, one would have $1=j\circ i(1)=j(2)=j(1)+j(1)$ that is a clear contradiction.  Therefore the variety of (abelian) groups is 1EP, but not  1ESP.
\hfill $\Box$ \end{example}
In 1ESP varieties we can characterize solutions by means of their kernels.
\begin{theorem}\label{thm:1ESPsol}
	Let $\vv V$ be a 1ESP variety. Consider an e-generalization problem $h$. Thus, $g: \free_{\vv V}(z) \to \alg P$, with $\alg P$ projective in $\vv V$, is a solution to $h$ if and only if $\ker(g) \in \Gen(h)$. 
\end{theorem}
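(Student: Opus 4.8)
The plan is to prove both directions of the biconditional, noting that the forward direction is essentially free from earlier results while the backward direction is where the 1ESP hypothesis does its work. For the forward direction, suppose $g: \free_{\vv V}(z) \to \alg P$ is a solution to $h$. Then $\ker(g)$ is a G-congruence simply by the definition of $\Gen(h)$ as the set of kernels of solutions; equivalently, this follows from Theorem \ref{thm:generalizercong}(1), which tells us $\ker(g)$ is exact and contained in $\ker(h)$. So no hypothesis beyond the definitions is needed here.

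The substance lies in the backward direction. Assume $\ker(g) \in \Gen(h)$, so $\ker(g)$ is a G-congruence; by Theorem \ref{thm:generalizercong}(1) it is exact and $\ker(g) \sse \ker(h)$. The goal is to manufacture a witness $f: \alg P \to \prod_{k=1}^m \alg E_k$ with $f \circ g = h$. First I would invoke the First Isomorphism Theorem to decompose $g = e \circ n_{\ker(g)}$, where $n_{\ker(g)}: \free_{\vv V}(z) \to \free_{\vv V}(z)/\ker(g)$ is the natural epimorphism and $e: \free_{\vv V}(z)/\ker(g) \to \alg P$ is an embedding. Since $\ker(g)$ is exact and $\vv V$ is 1ESP, the quotient algebra $\free_{\vv V}(z)/\ker(g)$ is strongly projective. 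Because it embeds via $e$ into the projective algebra $\alg P$, strong projectivity yields a homomorphism $j: \alg P \to \free_{\vv V}(z)/\ker(g)$ with $j \circ e = \mathrm{id}$, exhibiting the quotient as an $(e,j)$-retract of $\alg P$.

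The remaining step assembles the witness, following the reasoning sketched around Figure \ref{fig1ESP}. Since $\free_{\vv V}(z)/\ker(g)$ is projective and $\ker(g) = \ker(n_{\ker(g)}) \sse \ker(h)$, the Second Isomorphism Theorem (Theorem \ref{thm:secondiso}(2)) provides a homomorphism $f': \free_{\vv V}(z)/\ker(g) \to \prod_{k=1}^m \alg E_k$ with $f' \circ n_{\ker(g)} = h$; this is exactly the statement that $n_{\ker(g)}$ is itself a solution for $h$, as already observed in the 1EP case preceding the definition of 1ESP. Now set $f := f' \circ j$. Then
\[
f \circ g = f' \circ j \circ e \circ n_{\ker(g)} = f' \circ n_{\ker(g)} = h,
\]
using $j \circ e = \mathrm{id}$ in the middle equality, so $f$ witnesses $g$ as a solution to $h$.

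I expect the main obstacle to be purely one of careful bookkeeping rather than conceptual difficulty: one must make sure the decomposition $g = e \circ n_{\ker(g)}$ is correctly invoked (so that strong projectivity applies to the \emph{exact} quotient $\free_{\vv V}(z)/\ker(g)$, which is where exactness from Theorem \ref{thm:generalizercong}(1) feeds into the definition of 1ESP), and that the two factorizations — the retraction $j \circ e = \mathrm{id}$ from strong projectivity and the lift $f'$ from the Second Isomorphism Theorem — are composed in the right order to land the identity $f \circ g = h$. The essential point is that 1ESP is precisely the hypothesis that lets us complete the last step of the informal argument given before the definition, converting ``$n_{\ker(g)}$ is a solution'' into ``$g$ is a solution.''
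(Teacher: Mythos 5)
Your proposal is correct and follows essentially the same route as the paper's proof: decompose $g = e \circ n_{\ker(g)}$, obtain $f'$ with $f' \circ n_{\ker(g)} = h$ from the Second Isomorphism Theorem, use 1ESP to get the retraction $j$ with $j \circ e = \mathrm{id}$, and set $f = f' \circ j$. The only cosmetic difference is that you derive (strong) projectivity of the quotient from Theorem \ref{thm:generalizercong}(1) plus the 1ESP definition, whereas the paper routes through Corollary \ref{cor:1EPchar}; the argument is otherwise identical.
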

\begin{proof}
	If $g$ is a solution then $\ker(g)$ is in $\Gen(h)$ by definition. Conversely, assume $\ker(g) \in \Gen(h)$, and let us decompose $g:= e \circ n_{\ker(g)}$ as in the above Figure \ref{fig1ESP}. Since 1ESP implies 1EP, Corollary \ref{cor:1EPchar} yields that $\ker(g)$ is projective and below $\ker(h)$. By the Second Isomorphism Theorem, we get that there exists a homomorphism $f'$ such that $f' \circ n_{\ker(g)} = h$. Since $\free_{\vv V}(z)/\ker(g)$ is projective (and exact), and embeds via $e$ into the projective algebra $\alg P$, by 1ESP there exists $j$ such that $j \circ e$ is the identity map. Hence, $f:= f' \circ j$ testifies that $g$ is a solution, indeed: $$f \circ g = f' \circ j \circ e \circ n_{\ker(g)} = f' \circ n_{\ker(g)} = h.$$
	This concludes the proof.
\end{proof}
As a consequence we get the following invariance result for e-generalization problems in 1ESP varieties.
\begin{corollary}\label{cor:invariant1}
	Let $\vv V$ be 1ESP. Two algebraic e-generalization problems $h: \free_{\vv V}(z) \to \prod_{k = 1}^m \alg E_k$ and $h': \free_{\vv V}(z) \to \prod_{k = 1}^r \alg E_k$ have the same solutions if and only if $\ker(h)=\ker(h')$.
\end{corollary}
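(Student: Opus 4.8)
The plan is to use Theorem~\ref{thm:1ESPsol}, which in a 1ESP variety characterizes solutions entirely through their kernels: namely, $g$ is a solution to $h$ if and only if $\ker(g) \in \Gen(h)$. The strategy is therefore to show that the equality $\ker(h) = \ker(h')$ is equivalent to the equality of the two sets of generalizing congruences, $\Gen(h) = \Gen(h')$, since by Theorem~\ref{thm:1ESPsol} the latter equality is precisely what it means for the two problems to have the same solutions (any solution to either problem lands in a projective algebra $\alg P$, and membership is decided by the kernel alone). Notice that both $h$ and $h'$ have the same domain $\free_{\vv V}(z)$, so their kernels and their $\Gen$-sets all live in the single congruence lattice $\Con{\free_{\vv V}(z)}$, which is what makes this comparison meaningful.

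First I would handle the forward direction. Suppose $\ker(h) = \ker(h')$. By Corollary~\ref{cor:1EPchar} (applicable since 1ESP implies 1EP), for any e-generalization problem the set of generalizing congruences is exactly
\[
\Gen(h) = \{\theta \in \Con{\free_{\vv V}(z)} : \theta \sse \ker(h),\ \theta \text{ projective in } \vv V\}.
\]
This set depends on $h$ only through $\ker(h)$, so $\ker(h) = \ker(h')$ immediately yields $\Gen(h) = \Gen(h')$. Combined with Theorem~\ref{thm:1ESPsol}, this says $g$ is a solution to $h$ iff $\ker(g) \in \Gen(h) = \Gen(h')$ iff $g$ is a solution to $h'$; hence the two problems have exactly the same solutions.

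For the converse I would argue contrapositively: assume $\ker(h) \neq \ker(h')$ and produce a solution of one problem that is not a solution of the other. The natural candidate to test is the identity-like solution whose kernel is the problem's own kernel. Concretely, since every E-congruence is in particular exact (Theorem~\ref{propThetat}), and in a 1ESP variety every 1-generated exact algebra is (strongly) projective, the congruence $\ker(h)$ is itself projective; thus the natural epimorphism $n_{\ker(h)} : \free_{\vv V}(z) \to \free_{\vv V}(z)/\ker(h)$ is a solution to $h$ whose kernel is exactly $\ker(h) \in \Gen(h)$. If the two problems had the same solutions, $n_{\ker(h)}$ would also be a solution to $h'$, forcing $\ker(h) \in \Gen(h')$ and hence $\ker(h) \sse \ker(h')$ by Theorem~\ref{thm:generalizercong}(1); symmetrically $\ker(h') \sse \ker(h)$, giving $\ker(h) = \ker(h')$, a contradiction.

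The main obstacle I anticipate is the converse direction, specifically justifying that $\ker(h)$ itself arises as the kernel of an honest solution. The subtlety is that the codomain of a solution must be a \emph{projective} algebra, so one must verify that $\free_{\vv V}(z)/\ker(h)$ is projective --- this is where the 1ESP hypothesis (via exactness of E-congruences and the collapse of exact into projective for 1-generated algebras) is essential, and it is exactly the place where a variety that is merely 1EP, or not even that, could fail to support the argument. Once $n_{\ker(h)}$ is recognized as a genuine solution with kernel $\ker(h)$, the inclusion $\ker(h) \sse \ker(h')$ and its symmetric counterpart follow cleanly from Theorem~\ref{thm:generalizercong}(1).
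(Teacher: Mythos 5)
Your forward direction is correct and is exactly the paper's intended route: in a 1ESP (hence 1EP) variety, Corollary~\ref{cor:1EPchar} makes $\Gen(h)$ depend on $h$ only through $\ker(h)$, and Theorem~\ref{thm:1ESPsol} converts $\Gen(h)=\Gen(h')$ into equality of the solution sets. The converse direction, however, has a genuine gap, and it sits precisely at the point you flagged as the ``main obstacle''. You claim that $\ker(h)$ is projective because ``every E-congruence is in particular exact (Theorem~\ref{propThetat})''; but Theorem~\ref{propThetat} says only that E-congruences are the \emph{finite intersections} of exact congruences, and such an intersection need not be exact (nor projective), even in a 1ESP variety. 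The paper's own Kleene-algebra analysis provides a concrete failure: $\vv{KA}$ is 1ESP (Theorem~\ref{thmKleene1ESP}), the congruences $\langle z,1\rangle$ and $\langle z,0\rangle$ of $\free_{\vv{KA}}(z)$ are exact, yet their intersection $\langle z\vee\neg z,1\rangle$ is an E-congruence whose quotient ${\bf 4}_{\vv{KA}}$ is not exact (Proposition~\ref{KleeneExact}), hence not projective. For a problem $h$ with this kernel, $n_{\ker(h)}$ is \emph{not} a solution (Proposition~\ref{theorem:trivialSol2}), so the witness your argument needs does not exist.

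Worse, the gap is not repairable, because the implication ``same solutions $\Rightarrow$ equal kernels'' actually conflicts with the paper's other results. Take $h:\free_{\vv{KA}}(z)\to {\bf 2}_{\vv{KA}}\times{\bf 2}_{\vv{KA}}$ with $h(z)=(1,0)$, so that $\ker(h)=\langle z,1\rangle\cap\langle z,0\rangle=\langle z\vee\neg z,1\rangle$, and let $h'$ be the identity on $\free_{\vv{KA}}(z)$, so that $\ker(h')=\Delta$. Since $\langle z\vee\neg z,1\rangle$ is an atom of the congruence lattice (Figure~\ref{figKleene2}) and is not projective, the only projective congruence below either kernel is $\Delta$; thus $\Gen(h)=\Gen(h')=\{\Delta\}$ by Corollary~\ref{cor:1EPchar}, and Theorem~\ref{thm:1ESPsol} yields $\mathscr{A}(h)=\mathscr{A}(h')$ even though $\ker(h)\neq\ker(h')$. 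So what equality of solution sets genuinely characterizes is $\Gen(h)=\Gen(h')$, i.e.\ that the two kernels have the same projective congruences below them; this coincides with $\ker(h)=\ker(h')$ only under an additional hypothesis (for instance that every E-congruence is projective, as happens for Boolean algebras but not for Kleene algebras). In other words, the defect lies in the statement's ``only if'' direction itself, not merely in your proof of it; no blind proof attempt could have closed this gap.
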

Let us highlight some examples of 1ESP varieties.
\begin{example}\label{ex:trivialfree}
Every variety $\vv V$ whose 1-generated free algebra is trivial has the 1ESP property. To see this, notice that $\free_{\vv V}(z)$, whose domain is $\{z\}$, is the only 1-generated exact algebra in $\vv V$ and it is strongly projective. Indeed, let $\alg A$ be any algebra in $\vv V$ and let  $i: \free_{\vv V}(z)\to \alg A$ be an embedding. The trivial homomorphism $j: \alg A\to  \free_{\vv V}(z)$ is such that $j\circ i=id_{\free_{\vv V}(z)}$, and then $\vv V$ is 1ESP. 

This reasoning covers, for instance, the cases of: lattices, semilattices, and more generally all varieties without constants and whose operations are idempotent (in the sense that, given an $n$-ary operation $f(x_1, \ldots, x_n)$, $f(x, \ldots, x) = x$).
\hfill$\Box$
\end{example}
\begin{example}\label{prop:injproj}
Varieties where 1-generated exact algebras are {\em injective} are 1ESP.
Recall that an algebra $\alg A$ in a class of algebras $\vv K$ is said to be {\em injective} in $\vv K$ if, for any $\alg B, \alg C \in \vv K$, homomorphism $f : \alg B \to \alg A$ and  embedding $g : \alg B \to \alg C$, there exist a homomorphism $h : \alg C \to \alg A$ such that $f = h\circ g$.  

Then, in a variety $\vv V$, if an algebra $\alg A$ is exact and injective in $\vv V$, we get that $\alg A$ is strongly projective in $\vv V$. Indeed, since $\alg A$ is exact, it embeds into some (finitely generated) free algebra $\free_{\vv V}(X)$. Call the embedding $g$, and consider as $f$ the identity map from $\alg A$ to $\alg A$. Then since $\alg A$ is injective there exists $h: \free_{\vv V}(X) \to \alg A$ such that $id_{\alg A} = h \circ g$, which implies that $\alg A$ is a retract of $\free_{\vv V}(X)$ and it is therefore projective. Similarly, if there is an embedding $i: \alg A\to \alg P$ for some projective algebra $\alg P$, there is $j:\alg P \to \alg A$ such that $j\circ i=id_{\alg A}$ and therefore $\alg A$ is strongly projective.

This applies for instance to Boolean algebras, where all finite(ly generated) algebras are exact \cite{BalbesHorn}.
\end{example}
\subsection{A universal-algebraic methodology for the e-generalization type}
Let  $h: \free_{\vv V}(z) \to \prod_{k = 1}^m \alg E_k$ be an algebraic e-generalization problem for a variety $\vv V$. Given the results in the previous subsections, we shall develop a methodology to study the poset of solutions $(\mathscr{A}(h),\sqsubseteq)$ via the set $\Gen(h)$. 

To this end, we first observe that one can partition the algebraic solutions by means of their kernels. For every G-congruence $\theta\in \Gen(h)$, consider the following set:
$$
\Pi_\theta(h)=\{g\in \mathscr{A}(h)\mid  \ker(g)=\theta\}.
$$
The set $\{\Pi_\theta(h): \theta \in \Gen(h)\}$ is a partition of $\mathscr{A}(h)$.
Moreover, the inclusion order among the G-congruences of an algebraic e-generalization problem $h$  gives some insight on the generality order among solutions belonging to different classes of the partition, as the following result shows.
\begin{proposition}\label{prop:poset11}
Let  $h: \free_{\vv V}(z) \to \prod_{k = 1}^m \alg E_k$ be an algebraic e-generalization problem for a variety $\vv V$. Given two solutions $g: \free_{\vv V}(z) \to \alg P$ and $g': \free_{\vv V}(z)\to \alg P'$ in $(\mathscr{A}(h),\sqsubseteq)$, the following conditions hold:
\begin{enumerate}
\item  if $g' \sqsubseteq g$, then $\ker(g)\subseteq\ker(g')$;
\item if  $\ker(g)\subseteq \ker(g')$, then $g' \sqsubseteq n_{\ker(g)}$. 
\end{enumerate}
\end{proposition}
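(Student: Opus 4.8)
The plan is to prove the two implications separately, in each case unwinding the definition of the generality order $\sqsubseteq$ given in \eqref{eqOrderAlgSol} and using the First Isomorphism Theorem together with Lemma~\ref{thm:trivialSol1}.

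For part (1), I would start from the hypothesis $g'\sqsubseteq g$. By definition of $\sqsubseteq$, this means there exists a homomorphism $f:\alg P\to\alg P'$ such that $f\circ g = g'$. Now let $(a,a')\in\ker(g)$, so $g(a)=g(a')$. Applying $f$ to both sides gives $f(g(a))=f(g(a'))$, i.e.\ $g'(a)=g'(a')$, so $(a,a')\in\ker(g')$. Hence $\ker(g)\sse\ker(g')$, which is exactly the desired conclusion. This direction is a one-line kernel computation and should present no obstacle.

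For part (2), I assume $\ker(g)\sse\ker(g')$ and must show $g'\sqsubseteq n_{\ker(g)}$, i.e.\ that there is a homomorphism $f:\free_{\vv V}(z)/\ker(g)\to\alg P'$ with $f\circ n_{\ker(g)}=g'$. (Here $n_{\ker(g)}:\free_{\vv V}(z)\to\free_{\vv V}(z)/\ker(g)$ is the natural epimorphism, which is a solution in its own right whose kernel is $\ker(g)$, so the statement $g'\sqsubseteq n_{\ker(g)}$ makes sense in $(\mathscr{A}(h),\sqsubseteq)$ once we know $n_{\ker(g)}$ lies in $\mathscr{A}(h)$.) This is precisely the setting of the Second Isomorphism Theorem (Theorem~\ref{thm:secondiso}(2)): take $f:=n_{\ker(g)}$ and $g:=g'$ in the notation of that theorem, noting that $n_{\ker(g)}$ is onto and $\ker(n_{\ker(g)})=\ker(g)\sse\ker(g')$. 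The theorem then yields a unique homomorphism $f:\free_{\vv V}(z)/\ker(g)\to\alg P'$ with $g'=f\circ n_{\ker(g)}$, which by definition of $\sqsubseteq$ gives $g'\sqsubseteq n_{\ker(g)}$.

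The main point requiring care is the justification that $n_{\ker(g)}$ is indeed an admissible solution of $h$, so that the relation $g'\sqsubseteq n_{\ker(g)}$ is being asserted inside the generality poset $(\mathscr{A}(h),\sqsubseteq)$. This is not automatic from the hypotheses alone, but it follows because $\ker(g)\in\Gen(h)$ (as $g$ is a solution), and since $\ker(n_{\ker(g)})=\ker(g)$ we may invoke Lemma~\ref{thm:trivialSol1} together with the construction underlying Theorem~\ref{thm:generalizercong} to see that $n_{\ker(g)}$ witnesses the same G-congruence; alternatively, one observes directly that $\ker(g)\sse\ker(h)$ by Lemma~\ref{thm:trivialSol1}, and applying the Second Isomorphism Theorem to $n_{\ker(g)}$ and $h$ produces the witnessing homomorphism making $n_{\ker(g)}$ a solution. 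Apart from this bookkeeping, both implications are direct applications of the isomorphism theorems and should be straightforward.
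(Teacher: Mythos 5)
Your proofs of the two implications themselves are correct and essentially identical to the paper's: part (1) is the one-line kernel computation from the definition of $\sqsubseteq$, and part (2) is a direct application of the Second Isomorphism Theorem (Theorem~\ref{thm:secondiso}(2)) to the natural epimorphism $n_{\ker(g)}$ and $g'$, using $\ker(n_{\ker(g)})=\ker(g)\sse\ker(g')$.

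However, your final paragraph — the ``main point requiring care'' — is both unnecessary and incorrect. It is unnecessary because the relation $\sqsubseteq$ in \eqref{eqOrderAlgSol} is defined for \emph{arbitrary} homomorphisms with the same domain, not only for elements of $\mathscr{A}(h)$; so the assertion $g'\sqsubseteq n_{\ker(g)}$ is meaningful without $n_{\ker(g)}$ being a solution, and the proposition claims nothing more. It is incorrect because $n_{\ker(g)}$ need \emph{not} be a solution of $h$: by Definition~\ref{def:functunif} a solution must have a finitely generated \emph{projective} codomain, and $\free_{\vv V}(z)/\ker(g)$ is only guaranteed to be exact (it embeds into $\alg P$, which embeds into a free algebra). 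Your argument via the Second Isomorphism Theorem produces the witnessing homomorphism $f'$ with $f'\circ n_{\ker(g)}=h$, but that alone does not make $n_{\ker(g)}$ a solution — projectivity of the quotient is the missing ingredient and cannot be derived from the hypotheses. Indeed, the paper remarks immediately after this proposition that $n_{\ker(g)}$ is a solution exactly when $\free_{\vv V}(z)/\ker(g)$ is projective (see also Proposition~\ref{theorem:trivialSol2}), and in varieties without the 1EP property exact quotients of $\free_{\vv V}(z)$ can fail to be projective. You should simply delete that paragraph; the rest of your argument stands as is.
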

\begin{proof}
(1) By definition of the order in the poset of algebraic solutions, if $g' \sqsubseteq g$ there exists $f:\alg P \to \alg P'$ such that $f \circ g = g'$. Hence, $\ker(g)\subseteq \ker(g')$.
\vspace{.2cm}

(2) Consider the natural epimorphism $n_{\ker(g)}:\free_{\vv V}(z)\to \free_{\vv V}(z)/\ker(g)$; since $\ker(g)\subseteq\ker(g')$, the Second Isomorphism Theorem  yields directly that there exists a homomorphism $f:  \free_{\vv V}(z)/\ker(g)  \to \alg P'$ such that $f \circ n_{\ker(g)} = g'$. Thus, $g' \sqsubseteq n_{\ker(g)}$ by definition of the generality order. 
\end{proof}

Note that in point (2) of Proposition \ref{prop:poset11}, $n_{\ker(g)}$ is not necessarily a solution itself. This indeed happens exactly when $\free_{\vv V}(z)/\ker(g)$ is a projective algebra. In general the following holds.
\begin{proposition}\label{theorem:trivialSol2}
Let  $h: \free_{\vv V}(z) \to \prod_{k = 1}^m \alg E_k$ be an algebraic e-generalization problem for a variety $\vv V$.  Given any $\theta \in \Gen(h)$, $n_\theta: \free_{\vv V}(z) \to \free_{\vv V}(z)/\theta$ is a solution if and only if $\theta$ is projective. Moreover, if that is the case, $n_\theta$ is the top element of $\Pi_\theta$ by the generality order.
\end{proposition}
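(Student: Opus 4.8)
The statement has two parts. For the first biconditional, I would show that $n_\theta \colon \free_{\vv V}(z) \to \free_{\vv V}(z)/\theta$ is a solution for $h$ if and only if $\theta$ is projective. For the second part, I would show that when $\theta$ is projective, $n_\theta$ is the top element of the class $\Pi_\theta(h)$ under $\sqsubseteq$.

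\textbf{The forward direction of the biconditional.} Assume $n_\theta$ is a solution for $h$. By the very definition of an algebraic solution (Definition \ref{def:functunif}), the codomain of a solution must be finitely generated and projective in $\vv V$. The codomain of $n_\theta$ is precisely $\free_{\vv V}(z)/\theta$, and since $\free_{\vv V}(z)$ is $1$-generated, so is the quotient; hence $\free_{\vv V}(z)/\theta$ is projective, which is exactly what it means for $\theta$ to be projective (Definition \ref{congProj}). This direction is essentially immediate from the definitions.

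\textbf{The converse direction.} Assume $\theta \in \Gen(h)$ is projective. Since $\theta$ is a G-congruence, by Theorem \ref{thm:generalizercong}(1) we have $\theta \sse \ker(h)$. Now $\free_{\vv V}(z)/\theta$ is projective by hypothesis, so the Second Isomorphism Theorem (Theorem \ref{thm:secondiso}(2)), applied to the natural epimorphism $n_\theta$ and the problem $h$ (using $\ker(n_\theta) = \theta \sse \ker(h)$), yields a homomorphism $f \colon \free_{\vv V}(z)/\theta \to \prod_{k=1}^m \alg E_k$ with $f \circ n_\theta = h$. This $f$ witnesses that $n_\theta$ is a solution for $h$, since its codomain is finitely generated and projective. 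I expect this step to be routine, as the argument was already previewed in the proof of Theorem \ref{thm:generalizercong}(2); the only subtlety worth noting explicitly is that $\theta \sse \ker(h)$ comes for free because $\theta$ lies in $\Gen(h)$.

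\textbf{Maximality within $\Pi_\theta(h)$.} Suppose $\theta$ is projective and let $g \in \Pi_\theta(h)$ be any solution with $\ker(g) = \theta$. I want $g \sqsubseteq n_\theta$. Since $\ker(g) = \theta = \ker(n_\theta)$, we trivially have $\ker(n_\theta) \sse \ker(g)$, so Proposition \ref{prop:poset11}(2) (with the roles set so that the epimorphism is $n_\theta$) gives directly that $g \sqsubseteq n_{\ker(n_\theta)} = n_\theta$. Thus every element of $\Pi_\theta(h)$ is below $n_\theta$, and since $n_\theta$ itself lies in $\Pi_\theta(h)$ (as $\ker(n_\theta) = \theta$ and it is a solution by the converse direction above), $n_\theta$ is the top element. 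The main thing to double-check here is the direction of the inequality produced by Proposition \ref{prop:poset11}(2): it concludes that an arbitrary solution is below the natural epimorphism associated with a smaller (or equal) kernel, which is exactly the relation $g \sqsubseteq n_\theta$ we need, with the smaller kernel being $\theta$ itself.

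\textbf{The main obstacle.} There is no serious obstacle; the result is a clean corollary of the machinery already developed. The one place requiring care is bookkeeping with the direction of the order $\sqsubseteq$ and the inclusion of kernels, since $\sqsubseteq$ reverses under $\ker$ (smaller kernel means more general), so I would state the application of Proposition \ref{prop:poset11}(2) carefully to avoid flipping the comparison.
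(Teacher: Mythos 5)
Your proof is correct and follows essentially the same route as the paper: the forward direction is definitional in both, and the converse direction and the maximality claim both rest on the Second Isomorphism Theorem. The only immaterial difference is bookkeeping: you apply the theorem directly to the pair $(n_\theta, h)$ using $\theta \subseteq \ker(h)$ from Theorem \ref{thm:generalizercong}(1), whereas the paper applies it to $(n_\theta, g)$ for a solution $g$ with $\ker(g)=\theta$ and composes the resulting map with the witness of $g$; likewise, your appeal to Proposition \ref{prop:poset11}(2) for maximality just packages the same argument that the paper writes out inline.
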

\begin{proof}
Consider $\theta \in \Gen(h)$. Note that projectivity is a necessary condition for $n_\theta: \free_{\vv V}(z) \to \free_{\vv V}(z)/\theta$ to be a solution by definition of an algebraic solution. 

Assume now $\theta$ projective. Since $\theta \in \Gen(h)$, by definition there is some solution $g: \free_{\vv V}(z) \to \alg P$ such that $\theta = \ker(g)$, and a testifying homomorphism $f: \alg P \to \prod_{k = 1}^m \alg E_k$ such that $f \circ g = h$. By the Second Isomorphism Theorem applied to $n_\theta$ and $g$, the fact that $\theta = \ker(g)$ yields a homomorphism $f':  \free_{\vv V}(z)/\theta \to \alg P$ such that $f' \circ n_\theta = g$. Thus, the composition $f'' = f \circ f'$ testifies that $n_\theta$ is a solution to $h$. Indeed:
$$f'' \circ n_\theta= f \circ f' \circ n_\theta = f \circ g = h.$$
Therefore, if $\theta$ is projective and in $\Gen(h)$, $n_\theta$ is a solution.
The reasoning above also proves that $n_\theta$ is the top element of $\Pi_\theta$ by the generality order; indeed,  any $g: \free_{\vv V}(z) \to \alg P$ in $\Pi_\theta(h)$ is such that $\theta = \ker(g)$ and therefore, as above, there exists $f':  \free_{\vv V}(z)/\theta \to \alg P$ such that $f' \circ n_\theta = g$, and then $g \sqsubseteq n_\theta$.
\end{proof}
The above results allow one to have a ``rough'' idea of the generality poset of a problem. 
In the rest of the section we use this intuition towards the study of the e-generalization type.
In the general case, we can derive that the cardinality of a set of complete maximal congruences in $\Gen(h)$ gives a best-case scenario for the e-generalization type of $h$. To be precise, let us define the {\em type of $\Gen(h)$} to be:
\begin{itemize}
	\item unitary: if there is a maximal complete set of congruences ({\em mcsc} for short) in $(\Gen(h), \sse)$ of cardinality 1;
	\item finitary:  if there is an mcsc in $(\Gen(h), \sse)$ of finite (greater than $1$) cardinality;
	\item infinitary: if there is an mcsc in $(\Gen(h), \sse)$  of infinite (not finite) cardinality;
	\item nullary: if there is no mcsc in $(\Gen(h), \sse)$.
\end{itemize}
\begin{theorem}\label{thmType}
	Let  $h: \free_{\vv V}(z) \to \prod_{k = 1}^m \alg E_k$ be an algebraic e-generalization problem for a variety $\vv V$. Then:
	\begin{enumerate}
		\item If the type of $\Gen(h)$ is nullary or infinitary, then the e-generalization type of $h$ is at best infinitary.
		\item If the type of $\Gen(h)$ is finitary, the e-generalization type of $h$ is at best finitary.
	\end{enumerate}
\end{theorem}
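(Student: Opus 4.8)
The plan is to prove both items by contraposition, exploiting the fact that the kernel map $\ker\colon(\mathscr{A}(h),\sqsubseteq)\to(\Gen(h),\sse)$ is \emph{order-reversing} by Proposition \ref{prop:poset11}(1), together with the identity $\Gen(h)=\ker[\mathscr{A}(h)]$, so that every $\theta\in\Gen(h)$ equals $\ker(g)$ for some solution $g$. Concretely, for (1) I would show that if $h$ has unitary or finitary type then $\Gen(h)$ has unitary or finitary type (the negation of ``nullary or infinitary''); for (2) that if $h$ has unitary type then $\Gen(h)$ has unitary type, which in particular excludes finitary.

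The central observation I would isolate first is that the kernels of an mcsg dominate $\Gen(h)$ from above. Indeed, if $M$ is a minimal complete set of generalizers for $h$ and $\theta=\ker(g)\in\Gen(h)$, then completeness of $M$ gives $g_0\in M$ with $g_0\sqsubseteq g$, and Proposition \ref{prop:poset11}(1) yields $\theta=\ker(g)\sse\ker(g_0)\in\ker[M]$. For item (1), assume $h$ is unitary or finitary, so it admits a \emph{finite} mcsg $M$. Then $\ker[M]$ is a finite subset of $\Gen(h)$ dominating it from above, and the set $N$ of $\sse$-maximal elements of $\ker[M]$ is an antichain that still dominates $\Gen(h)$ from above (each element of the finite set $\ker[M]$ lies below a maximal one). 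Hence $N$ is a maximal complete set of congruences with $|N|\le|M|<\omega$, so the type of $\Gen(h)$ is unitary or finitary, contradicting nullary or infinitary.

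For item (2), assume $h$ is unitary with mcsg $\{g_1\}$. Then every solution $g$ satisfies $g_1\sqsubseteq g$, whence $\ker(g)\sse\ker(g_1)$ by Proposition \ref{prop:poset11}(1); thus $\ker(g_1)$ is the greatest element of $(\Gen(h),\sse)$. A poset with a top element $\top$ has $\{\top\}$ as its unique maximal complete set, so $\Gen(h)$ has unitary type and in particular is not finitary, as required.

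The one point demanding care — and the place where the two cases could otherwise leak into each other — is the well-definedness of ``the type of $\Gen(h)$'', i.e.\ that a poset cannot be simultaneously unitary and finitary, and that a finite mcsc rules out infinitary. This rests on the standard fact that maximal complete sets, when they exist, are \emph{unique}: given two such sets $M,M'$, for $a\in M$ pick $b\in M'$ with $a\le b$ and then $c\in M$ with $b\le c$; the antichain condition on $M$ forces $a=c$, hence $a=b\in M'$, giving $M=M'$. I would record this uniqueness as a preliminary remark so that ``$\Gen(h)$ has unitary type'' legitimately excludes ``finitary'' (making the contrapositive of (2) conclusive) and so that the finite mcsc produced in (1) genuinely witnesses a unitary-or-finitary type rather than coexisting with an infinite one.
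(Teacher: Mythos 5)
Your proposal is correct and is essentially the paper's own argument in contrapositive form: both rest on the identity $\Gen(h)=\ker[\mathscr{A}(h)]$ and on Proposition \ref{prop:poset11}(1) to show that the kernels of a finite mcsg dominate $\Gen(h)$ from above, the paper merely phrasing this as a proof by contradiction (producing a $\theta\in\Gen(h)$ not below any $\ker(g_i)$ and hence a solution no mcsg element lies below). Your two explicit refinements --- extracting the antichain of maximal elements of $\ker[M]$ and proving uniqueness of maximal complete sets so the four types are mutually exclusive --- are points the paper uses implicitly, so nothing of substance differs.
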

\begin{proof}
Assume first that the type of $\Gen(h)$ is nullary or infinitary; then we prove that there cannot be a finite minimal complete set of solutions to $h$. Assume by way of contradiction that there is, and call it $\mathscr{M} = \{g_1, \ldots, g_n\}$, where each $g_i: \free_{\vv V}(z) \to \alg P_i$ for some projective algebra $\alg P_i \in \vv V$.
Consider now the set of congruences $\{\theta_i = \ker(g_{i}): i = 1, \ldots, n\} \sse \Gen(h)$; since the type of $\Gen(h)$ is nullary or infinitary, there must be an exact congruence $\theta \in \Gen(h)$ such that $\theta \not\sse \theta_i$ for all $i = 1, \ldots, n$. For otherwise one would have that the type of $\Gen(h)$ is finitary.

Now, since $\theta \in  \Gen(h)$, by definition there exists a solution $g: \free_{\vv V}(z) \to \alg P$ such that $\theta = \ker(g)$.
 Then $g_i \not\sqsubseteq g$ for all $i = 1, \ldots, n$. Indeed, if $g_i \sqsubseteq g$ for some $i \in \{1, \ldots, n\}$, by Proposition \ref{prop:poset11} we would get that $\theta \sse \theta_i$, a contradiction. Thus there is no element in $\mathscr{M}$ that is below $g$, which contradicts the fact that $\mathscr{M}$ is a complete set of minimal solutions. It follows that the type of $h$ is at best infinitary, and (1) holds.

For (2), suppose now that the type of $\Gen(h)$ is finitary. Then the same argument as above, setting $n=1$ and  $\mathscr{M}=\{g_1\}$, shows that the e-generalization type of $h$ cannot be unitary.
\end{proof}
Note that if the type of $\Gen(h)$ is unitary, we can draw no conclusion on the type of $h$ because we cannot make any assumptions on the generality poset restricted to $\Pi_\theta(h)$, where $\{\theta\}$ is a complete maximal set in $\Gen(h)$. Moreover, if the type of $\Gen(h)$ is nullary, we cannot directly conclude that the type of $h$ is also nullary. Indeed, for instance, it could be the case that in $\Gen(h)$ there is a countable cofinal chain of congruences $\theta_1 \sse \theta_2 \sse \ldots$ such that the union of the minimal elements in all the $\Pi_{\theta_i}(h)$ are all incomparable, and therefore constitute a (infinite) complete minimal set of solutions.

In 1ESP varieties the situation is considerably clearer, and in fact the study of the generality poset is fully reduced to the poset of projective congruences of the 1-generated free algebra.
\begin{theorem}\label{thm1ESP}
	Let $\vv V$ be a 1ESP variety and consider an algebraic e-generalization problem $h$. Its poset of solutions $\mathscr{A}(h)$ is dually isomorphic to the poset of congruences in $\Gen(h)$. 
	\end{theorem}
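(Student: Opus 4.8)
The goal is to establish a dual isomorphism between $(\mathscr{A}(h), \sqsubseteq)$ and $(\Gen(h), \subseteq)$. The natural candidate for the map is $\ker$, which sends each solution $g \in \mathscr{A}(h)$ to $\ker(g) \in \Gen(h)$; recall we already observed $\Gen(h) = \ker[\mathscr{A}(h)]$, so this map is surjective by construction. The key structural fact available to us is Theorem \ref{thm:1ESPsol}: in a 1ESP variety, a homomorphism $g : \free_{\vv V}(z) \to \alg P$ with $\alg P$ projective is a solution precisely when $\ker(g) \in \Gen(h)$. Together with Proposition \ref{theorem:trivialSol2}, which in the 1ESP setting guarantees that every $\theta \in \Gen(h)$ is projective (via 1ESP $\Rightarrow$ 1EP and Corollary \ref{cor:1EPchar}) so that $n_\theta$ is itself a solution and the \emph{top element} of $\Pi_\theta(h)$, this strongly suggests the isomorphism should be realized by passing to these canonical representatives $n_\theta$.

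\textbf{Key steps.} First I would argue that $\ker$ is well-defined and order-\emph{reversing}: this is exactly Proposition \ref{prop:poset11}(1), which says $g' \sqsubseteq g$ implies $\ker(g) \subseteq \ker(g')$. Second, I would establish the reverse direction to get an order-\emph{embedding} into the dual, i.e.\ that $\ker(g) \subseteq \ker(g')$ implies $g' \sqsubseteq g$ in the quotient poset. The obstacle here is that $\ker$ is not injective on $\mathscr{A}(h)$ itself — many solutions share a kernel (they form the class $\Pi_\theta(h)$). The resolution is that $(\mathscr{A}(h), \sqsubseteq)$ is the poset of \emph{equally general} solutions, i.e.\ the quotient of the preorder by the equivalence $g \equiv g'$ iff $g \sqsubseteq g'$ and $g' \sqsubseteq g$. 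So I must show that any two solutions with the same kernel are equally general: if $\ker(g) = \ker(g') = \theta$, then by Proposition \ref{theorem:trivialSol2} both $g$ and $g'$ lie below $n_\theta$, and conversely — since in 1ESP varieties $\free_{\vv V}(z)/\theta$ is projective and strongly projective, and $g = e \circ n_\theta$ factors through an embedding $e$ that splits — one recovers $n_\theta \sqsubseteq g$, yielding $g \equiv g' \equiv n_\theta$. Thus each class $\Pi_\theta(h)$ collapses to a single point of the poset $(\mathscr{A}(h), \sqsubseteq)$, represented by $n_\theta$.

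\textbf{Assembling the dual isomorphism.} Having identified each equivalence class of solutions with its kernel, the induced map $\bar{\ker} : (\mathscr{A}(h), \sqsubseteq) \to (\Gen(h), \subseteq)$ is a well-defined bijection: surjectivity holds since $\Gen(h) = \ker[\mathscr{A}(h)]$, and injectivity on the quotient holds precisely because equal kernels force equal generality, as just shown. It remains to verify that $\bar{\ker}$ and its inverse $\theta \mapsto [n_\theta]$ are both order-reversing. One direction is Proposition \ref{prop:poset11}(1). For the other, if $\theta \subseteq \theta'$ with $\theta, \theta' \in \Gen(h)$, then since both are projective congruences, $n_\theta$ and $n_{\theta'}$ are solutions, and Proposition \ref{prop:poset11}(2) gives $n_{\theta'} \sqsubseteq n_\theta$; hence smaller kernels correspond to more general solutions, confirming the reversal. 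Combining the two monotonicity statements shows $g' \sqsubseteq g$ iff $\ker(g) \subseteq \ker(g')$, i.e.\ $\bar{\ker}$ is a dual order-isomorphism.

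\textbf{Main obstacle.} The delicate point is not the order-reversal — that is essentially Proposition \ref{prop:poset11} applied in both directions — but rather showing that the fibers $\Pi_\theta(h)$ genuinely collapse in the \emph{equally general} poset, which is where the full strength of 1ESP (as opposed to mere 1EP) is needed. It is here that strong projectivity supplies the splitting $j : \alg P \to \free_{\vv V}(z)/\theta$ of the embedding $e$, letting one climb from an arbitrary solution $g$ back up to the canonical $n_\theta$; without this, one only gets $g \sqsubseteq n_\theta$ (Proposition \ref{theorem:trivialSol2}) and the fiber need not be a single point of the poset. I would make sure to isolate this splitting argument cleanly, since it is exactly what distinguishes the clean 1ESP reduction from the merely partial results of Theorem \ref{thmType} in the general case.
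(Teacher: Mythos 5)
Your proposal is correct and takes essentially the same route as the paper's proof: the map $\ker$, surjectivity from $\Gen(h)=\ker[\mathscr{A}(h)]$ (equivalently Theorem \ref{thm:1ESPsol}), injectivity on the quotient poset by using 1ESP to split the embedding $e$ in the decomposition $g = e \circ n_{\ker(g)}$, and order-reversal in both directions via Proposition \ref{prop:poset11}. The only cosmetic difference is that you collapse each fiber $\Pi_\theta(h)$ by passing through the canonical representative $n_\theta$ (showing $g \equiv n_\theta \equiv g'$), whereas the paper composes the two splittings $j, j'$ directly to get $g \sqsubseteq g'$ and $g' \sqsubseteq g$; both arguments use strong projectivity in the identical way.
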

\begin{proof}
	We claim that the map $\ker$ establishes a dual isomorphism from $\mathscr{A}(h)$ to $\Gen(h)$. 
	First, note that the map is well defined since if two solutions $g, g' \in \mathscr{A}(h)$ are equally general, they have the same kernel. Indeed, $g \sqsubseteq g'$ implies $g = f \circ g'$ for some $f$, and therefore $\ker(g') \sse \ker(g)$; similarly $g' \sqsubseteq g$ implies $\ker(g) \sse \ker(g')$. Moreover, by Theorem \ref{thm:1ESPsol}, $g$ is a solution if and only if $\ker(g) \in \Gen(h)$, which implies that $\ker$ is a surjective function from $\mathscr{A}(h)$ to $\Gen(h)$. Let us demonstrate that the map is injective. Suppose that $g, g' \in \mathscr{A}(h)$ are such that $\ker(g) = \ker(g') = \theta$, we prove that $g$ and $g'$ are equally general. Let us write $g: = e \circ n_\theta, g':= e' \circ n_\theta$ for some embeddings $e, e'$, which exist by Theorem \ref{thm:secondiso}(1). By 1ESP, there exist $j, j'$ such that $j \circ e = id_{\free_{\vv V}(z)/\theta} = j' \circ e'$. Hence:
	$$e \circ j' \circ g' = e \circ j' \circ e' \circ n_\theta = e \circ n_\theta = g,$$
which verifies that $g \sqsubseteq g'$, and similarly $g' \sqsubseteq g$ since:
$$e' \circ j \circ g = e' \circ j \circ e \circ n_\theta = e' \circ n_\theta = g'.$$
We have proved that $\ker$ is a bijection from $\mathscr{A}(h)$ to $\Gen(h)$; it is a dual poset isomorphism as a consequence of Proposition \ref{prop:poset11}.
\end{proof}

\begin{corollary}\label{cor:anti1esp}
	Let $\vv V$ be 1ESP, and consider an e-generalization problem $h$. Then the e-generalization type of $h$ is exactly the type of $\Gen(h)$.
\end{corollary}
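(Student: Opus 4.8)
The plan is to leverage Theorem \ref{thm1ESP}, which establishes that in a 1ESP variety the poset of solutions $(\mathscr{A}(h), \sqsubseteq)$ is \emph{dually} isomorphic to the poset $(\Gen(h), \sse)$. Once we have a dual order-isomorphism between two posets, the entire order-theoretic structure of one is determined by the other, with the roles of minimal and maximal swapped. The e-generalization type of $h$ is defined via minimal complete sets of solutions in $(\mathscr{A}(h), \sqsubseteq)$, while the type of $\Gen(h)$ is defined via maximal complete sets of congruences in $(\Gen(h), \sse)$. The key observation is that these two notions correspond precisely to each other under a dual isomorphism.

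First I would make explicit the general fact that a dual poset isomorphism $\varphi: (P, \leq) \to (Q, \leq')$ sends minimal complete sets of $P$ to maximal complete sets of $Q$, and preserves their cardinality. Concretely, if $M \sse P$ is a minimal complete set (every $p \in P$ lies above some $a \in M$, and elements of $M$ are pairwise incomparable), then $\varphi[M] \sse Q$ is a maximal complete set: the order-reversal turns ``$a \leq p$'' into ``$\varphi(p) \leq' \varphi(a)$'', so every element of $Q$ lies \emph{below} some element of $\varphi[M]$, which is exactly the defining property of a maximal complete set; incomparability is preserved since $\varphi$ is an order-isomorphism onto its image in the dual, and injectivity guarantees $|\varphi[M]| = |M|$. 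The inverse $\varphi^{-1}$ carries maximal complete sets back to minimal complete sets in the same way.

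From this correspondence, each clause of the type definitions matches up exactly. The poset $(\mathscr{A}(h), \sqsubseteq)$ has a minimal complete set of cardinality $1$ (respectively, finite $>1$, infinite, or none) if and only if $(\Gen(h), \sse)$ has a maximal complete set of the same cardinality (respectively none), because the bijection $\ker$ and its inverse transport these sets back and forth while preserving cardinality. Hence the e-generalization type of $h$, read off from minimal complete sets of solutions, coincides with the type of $\Gen(h)$, read off from maximal complete sets of congruences, clause by clause (unitary, finitary, infinitary, nullary). This gives the corollary.

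I expect the main subtlety to lie not in any deep argument but in carefully checking that the \emph{definitions} of the two types are genuinely dual to one another---that ``minimal complete set'' for $(\mathscr{A}(h), \sqsubseteq)$ is the order-dual notion of ``maximal complete set'' for $(\Gen(h), \sse)$---so that the dual isomorphism of Theorem \ref{thm1ESP} transports one exactly onto the other. Since the minimal/maximal complete set definitions given before Theorem \ref{thmType} are set up symmetrically (condition (1) uses $a \leq p$ or $p \leq a$, and condition (2) is the same incomparability requirement in both cases), this verification is routine, and the corollary follows immediately from Theorem \ref{thm1ESP} together with the elementary lemma on dual isomorphisms above.
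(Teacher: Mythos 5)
Your proposal is correct and follows exactly the route the paper intends: the corollary is an immediate consequence of Theorem \ref{thm1ESP}, since the dual isomorphism $\ker$ carries minimal complete sets of $(\mathscr{A}(h), \sqsubseteq)$ bijectively (hence cardinality-preservingly) onto maximal complete sets of $(\Gen(h), \sse)$, matching the two type definitions clause by clause. The paper leaves this verification implicit, so your explicit check of the elementary fact about dual isomorphisms is a faithful filling-in of the same argument rather than a different approach.
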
	

\begin{remark}
	An interesting consequence of Theorem \ref{thm1ESP} above is that in 1ESP varieties each element of the generality poset has a representative given by one of the natural epimorphisms $n_\theta$, which has as codomain a 1-generated projective algebra. This means that, given a symbolic problem $\bt$, and a solution for it given by a term $s$, there is a 1-variable term $s'$ that is equally general to $s$. Essentially, in the poset of symbolic solutions one can restrict to consider 1-variable terms.
\end{remark}

\begin{corollary}\label{ex:typelattices}
	Every variety whose 1-generated free algebra is trivial has unitary e-generalization type. In particular: lattices, semilattices, all varieties without constants and whose operations are idempotent.
\end{corollary}
\begin{proof}
In Example \ref{ex:trivialfree} we have seen that every variety $\vv V$ whose 1-generated free algebra is trivial has the 1ESP property. This includes lattices, semilattices, and more generally all varieties without constants and whose operations are idempotent.
Since the 1-generated free algebra is trivial, the type of $\Gen(h)$ is clearly unitary for all problems $h$; it follows from Corollary \ref{cor:anti1esp} that every such variety has unitary $\kappa$-type for every cardinal $\kappa \leq \omega$.
\end{proof}
\begin{remark}
	It is proved in \cite{CernaKutsia} that idempotent anti-unification is infinitary, which seems to contradict the result outlined in the above corollary. However, the result in \cite{CernaKutsia} makes key use of two constants, assumed to be in the language, in order to show that the type is (at least) infinitary. If we add constants, the 1-generated free algebra is not trivial, and therefore there is no contradiction. We also note that our result is incomparable with the one in \cite{CernaKutsia}, in the sense that while ours is more general with respect to the fact that we do not restrict the equational theory to idempotency (our result holds for any equational theory that includes idempotency of all symbols), we do need to restrict to not having constants.
\end{remark}

Let us conclude this section by identifying a sufficient condition for deriving the unitary type of a problem.
\begin{lemma}\label{ThmTypeKer}
Let $h:\free_{\vv V}(z)\to\prod_{k=1}^m \alg E_k$ be an algebraic e-generalization problem. If $\prod_{k=1}^m \alg E_k$ is finitely generated and projective then the type of $h$ is unitary and $h$ is the minimum of $(\mathscr{A}(h),\sqsubseteq)$.
\end{lemma}
\begin{proof}
Note that if $\prod_{k=1}^m \alg E_k$ is finitely generated and projective then $h$ is a solution to itself, testified by the identity map. 
Moreover, let $g: \free_{\vv V}(z)\to \alg P$ be any other solution; then there exists $f:\alg P\to \prod_{k=1}^m \alg E_k$ such that $f\circ g=h$, and hence $h\sqsubseteq g$, whence $h$ is the minimum in $\mathscr{A}(h)$ with respect to the generality order.
\end{proof}
The previous lemma entails a sufficient condition for varieties to have unitary e-generalization type.
\begin{theorem}\label{corFinal}
If $\vv V$ is a variety such that finite direct products of 1-generated exact algebras are finitely generated and projective, then $\vv V$ has unitary type. 
\end{theorem}
\begin{proof}
If $h:\free_{\vv V}(z)\to\prod_{k=1}^m \alg E_k$ is any e-generalization problem, under our hypothesis, $\prod_{k=1}^m \alg E_k$ is finitely generated and projective and hence its type is unitary by Lemma \ref{ThmTypeKer}. Thus, the $\kappa$-type of $\vv V$, for all cardinals $\kappa \leq \omega$, is unitary as well.
\end{proof}

The following result closes this section, using the previous theorem to show the unitary type of the varieties of structures considered in the examples throughout this section\footnote{Baader in \cite{Baader} studies the generalization type of what are called {\em commutative theories}, which encompass commutative monoids and abelian groups, showing indeed unitary type. However, the generalization order considered in that paper is seemingly different from the one we consider here, as it involves ordering the testifying substitutions and not just the terms.}.
\begin{corollary}\label{ex:typegroups}
	Abelian groups, commutative monoids, and commutative semigroups have unitary e-generalization type.
\end{corollary}
\begin{proof}
	In Example \ref{exGruppi1} we have seen that in the varieties of interest the only 1-generated exact algebras are the free 1-generated algebra itself, $\free_{\vv V}(z)$ and, in all cases except commutative semigroups, the trivial algebra (which in such cases is projective). Thus, all nontrivial finite direct products of 1-generated exact algebras are finite powers of $\free_{\vv V}(z)$.

In all three cases, the $n$-th power $\free_{\vv V}(z)^n$ is isomorphic to the $n$-generated free algebra. To see this, notice that for all three varieties under consideration the $n$-generated free algebra is the algebra of polynomials (over the appropriate language) written over $n$-variables. Specifically, for abelian groups, commutative monoids, and commutative semigroups one gets $\mathbb{Z}[x_1,\ldots, x_n]$, $\mathbb{N}[x_1,\ldots, x_n]$, and $\mathbb{N}^+[x_1,\ldots, x_n]$ respectively. It is then easy to realize that these are isomorphic to, respectively, $\alg Z^n$, $\alg N^n$, and $(\alg N^{+})^n$. Therefore, finite direct products of 1-generated exact algebras are (free hence) projective, and we can apply Theorem \ref{corFinal} and conclude that all the three varieties above have unitary e-generalization type.
\end{proof}

Note that if we drop commutativity and consider groups, monoids, and semigroups, finite direct products of 1-generated exact algebras are not projective and hence one cannot apply Theorem \ref{corFinal}. Indeed,  the 1-generated exact algebras are the same as their commutative versions (see Example \ref{exGruppi1}) and hence so are their direct products, which are then commutative. However, these direct products  cannot embed into any free algebra since  no nontrivial subalgebra of free groups/monoids/semigroups is commutative. Hence finite direct products of 1-generated exact algebras are neither exact nor projective.

\section{Applications to (algebraizable) logics}\label{section:logic}
In this last section we will apply the results obtained so far to logic. 
Consider a logic ${\cc L}$ over a language $\rho$, identified by some substitution-invariant consequence relation on $\rho$-terms $\vdash\, \sse \cc P(\alg T_{\rho}(\omega)) \times \alg T_{\rho}(\omega)$ (for details on the definition of a logic in abstract algebraic logic see \cite{Font}). There is a natural equational theory that is usually associated to a logic, namely, {\em logical equivalence}. Given two formulas $\varphi$ and $\psi$ of the logic, that is to say, two terms in $\alg T_{\rho}(\omega)$, $\varphi$ is {\em logically equivalent} to $\psi$ if and only if $\varphi$ and $\psi$ have the same interpretation in every model of the logic.
 
 We consider an {\em e-generalization problem for a logic ${\cc L}$} to be an e-generalization problem with respect to the equational theory of logically equivalent formulas of $\cc L$.  
In the context of algebraizable logics (see \cite{BP89,Font} for details), the equational theory given by logical equivalence is exactly the one of the {\em equivalent algebraic semantics} of the logic $\cc L$, $\vv K_{\cc L}$. The associated variety is the smallest one containing $\vv K_{\cc L}$, or in other words, the variety generated by $\vv K_{\cc L}$\footnote{$\vv K_{\cc L}$ is in general a {\em generalized quasivariety}, and a {\em quasivariety} if the logic $\cc L$ is finitary (see \cite{Font} for details).}. When $\vv K_{\cc L}$ is a variety, we say that the logic is {\em strongly algebraizable} and the situation is simplified.

Hence, to study e-generalization problems for algebraizable logics we study the e-generalization type of their corresponding variety of algebras.

Given this general perspective, we now investigate some of the most well-known (algebraizable) logics and obtain examples of unitary type for the varieties of Boolean, G\"odel, and Kleene algebras, and nullary type for MV-algebras. 
\subsection{Classical logic}
The first case we take into consideration is that of classical logic and its equivalent algebraic semantics, the variety $\vv{BA}$ of Boolean algebras. 

By the reasoning in Example \ref{prop:injproj}, we get that Boolean algebras are 1ESP.
By Corollary \ref{cor:anti1esp}, the type of a problem $h$ for $\vv{BA}$ is exactly the type of $\Gen(h)$, which by Corollary \ref{cor:1EPchar} is given by the projective congruences below the E-congruence $\ker(h)$. 
Now, by Theorem \ref{propThetat}, the possible E-congruences are exactly the finite intersection of exact (hence projective) congruences of $\Con{\free_{\vv{BA}}(z)}$. In Figure \ref{figBoole} the reader can see both $\free_{\vv{BA}}(z)$ and its congruence lattice;  the projective congruences are all except the total one, $\nabla$. 

\begin{figure}
\begin{center}
\begin{tikzpicture}
  \node [label=below:{${0}$}, label=below:{} ] (n1)  {} ;
  \node [above left of=n1,label=left:{${z}$}, label=below:{ }] (n2)  {} ;
  \node [above right of=n1,label=right:{${\neg z}$},label=below:{}] (n3) {} ;
  \node [above left of=n3,label=above:{${1}$},label=below:{}] (n4) {} ;

  \draw  (n1) -- (n2);
  \draw (n1) -- (n3);
  \draw  (n3) -- (n4);
  \draw  (n2) -- (n4);
  \draw [fill] (n1) circle [radius=.5mm];
  \draw [fill] (n2) circle [radius=.5mm];
  \draw [fill] (n3) circle [radius=.5mm];
  \draw [fill] (n4) circle [radius=.5mm];
\end{tikzpicture}
\hspace{.5cm}
\begin{tikzpicture}

  \node [label=below:{${\Delta}$}, label=below:{} ] (n1)  {} ;
  \node [above left of=n1,label=left:{$\langle z, 1 \rangle$}, label=below:{ }] (n2)  {} ;
  \node [above right of=n1,label=right:{$\langle \neg z, 1 \rangle$},label=below:{}] (n3) {} ;
  \node [above left of=n3,label=above:{${\nabla}$},label=below:{}] (n4) {} ;

  \draw  (n1) -- (n2);
  \draw (n1) -- (n3);
   \draw  (n3) -- (n4);
  \draw  (n2) -- (n4);
  \draw [fill] (n1) circle [radius=.5mm];
  \draw [fill] (n2) circle [radius=.5mm];
  \draw [fill] (n3) circle [radius=.5mm];
  \draw [fill] (n4) circle [radius=.5mm];
\end{tikzpicture}
\caption{$\free_{\vv{BA}}(z)$ and its congruence lattice, where $\langle z, 1 \rangle$ and $\langle\neg z, 1 \rangle$ represent, respectively, the congruences generated by $(z,1)$ and $(\neg z,1)$.}\label{figBoole}
\end{center}
\end{figure}
It is clear that the E-congruences are exactly all and only the projective ones; moreover, the set of projective congruences below each one of them clearly has a maximum (themselves). Hence, applying Corollary \ref{cor:anti1esp} we derive, in analogy with the syntactic case \cite{Plotkin}:
\begin{theorem}
The e-generalization type of classical logic and Boolean algebras is unitary. 
\end{theorem}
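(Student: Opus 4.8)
The plan is to verify the hypotheses of Corollary~\ref{cor:anti1esp} for $\vv{BA}$ and then read off the type directly from the congruence lattice of $\free_{\vv{BA}}(z)$ depicted in Figure~\ref{figBoole}. First I would recall that Boolean algebras are 1ESP: by the cited fact that every finitely generated Boolean algebra is exact \cite{BalbesHorn}, together with the injectivity of Boolean algebras in $\vv{BA}$, Example~\ref{prop:injproj} applies and gives that all 1-generated exact algebras are strongly projective. Consequently Corollary~\ref{cor:anti1esp} lets me replace the analysis of the e-generalization type of an arbitrary problem $h$ by the purely order-theoretic analysis of $(\Gen(h),\sse)$.

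Next I would identify $\Gen(h)$ concretely. Since $\free_{\vv{BA}}(z)$ is the four-element Boolean algebra with atoms $z$ and $\neg z$, its congruence lattice is the four-element Boolean lattice $\{\Delta,\langle z,1\rangle,\langle\neg z,1\rangle,\nabla\}$. Every proper congruence is projective (equivalently, the corresponding quotient is a retract of $\free_{\vv{BA}}(z)$, all quotients except the trivial one being $\free_{\vv{BA}}(z)$ or the two-element algebra $\mathbf 2$, both projective), while $\nabla$ yields the trivial algebra and is not projective. By Theorem~\ref{propThetat} the E-congruences are exactly the finite intersections of exact (hence projective) congruences, and since the projective congruences are closed under the finite meets available in this lattice, the E-congruences coincide with the three projective congruences $\Delta,\langle z,1\rangle,\langle\neg z,1\rangle$. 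Fixing a problem $h$, Corollary~\ref{cor:1EPchar} gives $\Gen(h)=\{\theta:\theta\sse\ker(h),\ \theta\text{ projective}\}$.

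Now I would check the type of $\Gen(h)$ in each case. Since $\ker(h)$ is itself an E-congruence, it is projective, so $\ker(h)\in\Gen(h)$, and by construction $\ker(h)$ is the greatest element of $\Gen(h)$ under inclusion. Hence $\{\ker(h)\}$ is a maximal complete set of congruences of cardinality $1$, so the type of $\Gen(h)$ is unitary for every problem $h$. Applying Corollary~\ref{cor:anti1esp}, the e-generalization type of each $h$ is unitary, and therefore the $\kappa$-type of $\vv{BA}$ is unitary for all $\kappa\leq\omega$; by Corollary~\ref{cor:type} the same holds for classical logic.

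\emph{Main obstacle.} The only genuinely nonroutine point is establishing that $\vv{BA}$ is 1ESP rather than merely 1EP, i.e.\ that the relevant 1-generated exact algebras are not just projective but \emph{strongly} projective; everything downstream is a mechanical inspection of the four-element congruence lattice. I expect this to be handled cleanly by invoking injectivity via Example~\ref{prop:injproj}, so the structural work is already absorbed into the earlier machinery and the theorem reduces to observing that $\Gen(h)$ always has a top element.
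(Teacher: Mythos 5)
Your proposal is correct and takes essentially the same route as the paper: it establishes that $\vv{BA}$ is 1ESP via Example \ref{prop:injproj}, reduces the type of a problem $h$ to the type of $\Gen(h)$ via Corollaries \ref{cor:anti1esp} and \ref{cor:1EPchar}, and then reads off from the congruence lattice of $\free_{\vv{BA}}(z)$ that the E-congruences coincide with the projective ones, so $\ker(h)$ itself is the maximum of $\Gen(h)$ and the type is unitary. The paper only adds the side remark that the result could alternatively be derived from Corollary \ref{corFinal}, since finite direct products of $1$-generated exact Boolean algebras are projective.
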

Observe that one could have obtained the above result also utilizing Theorem \ref{corFinal}, since all finitely generated Boolean algebras, and hence in particular finite direct products of 1-generated exact Boolean algebras, are projective.
\subsection{G\"odel-Dummett logic and locally tabular many-valued logics}\label{sec:godel}
G\"odel-Dummett logic is one of the most well-studied intermediate logics, lying in the interval between intuitionistic and classical logic \cite{Du59}. Its language is given by the connectives $(\land, \lor, \to , 0, 1)$, and from the algebraic point of view its equivalent algebraic semantics, the variety $\vv{GA}$ of G\"odel algebras, is the subvariety of Heyting algebras generated by chains, i.e., totally ordered algebras, see \cite{Horn69}.

Since all non-trivial finite G\"odel algebras are projective (see \cite[Proposition 2.5]{DM06} for a proof based on a Priestley-like duality, and  \cite[Theorem 5.17]{AgUg1} for an algebraic proof), $\vv{GA}$ has the 1EP property. However, it is not hard to see that the 1ESP fails, as the following example shows.

\begin{example}\label{ex:GnotS}
Let us consider the 3-element chain ${\alg G}_{3}$ and the 4-element chain ${\alg G}_{4}$ with an embedding $i:{\alg G}_{3}\to{\alg G}_{4}$ as on the left-hand-side of the following figure
\begin{center}
\begin{tikzpicture}
  \node [label=below:{$0$}] (a1) at (-2,0) {};
\node [label=right:{$a$}] (a2) at (-2,1) {};
\node [label=right:{$b$}] (a3) at (-2,2) {};
\node [label=above:{$1$}] (a4) at (-2,3) {};
    \draw  (a1) -- (a2);
      \draw  (a2) -- (a3);
        \draw  (a3) -- (a4);
  \draw [fill] (a1) circle [radius=.5mm];
  \draw [fill] (a2) circle [radius=.5mm];
    \draw [fill] (a3) circle [radius=.5mm];
      \draw [fill] (a4) circle [radius=.5mm];
     \node [label=below:{$0$}] (b1) at (-4,0) {};
\node [label=left:{$x$}] (b2) at (-4,1) {};
\node [label=above:{$1$}] (b3) at (-4,2) {};
    \draw  (b1) -- (b2);
      \draw  (b2) -- (b3);
  \draw [fill] (b1) circle [radius=.5mm];
  \draw [fill] (b2) circle [radius=.5mm];
    \draw [fill] (b3) circle [radius=.5mm];

 \draw[->] (b1)--(a1);
  \draw[->] (b2)--(a3);
   \draw[->] (b3)--(a4);

\node [label=below:{$0$}] (m1) at (0,0) {};
\node [label=left:{$a$}] (m2) at (0,1) {};
\node [label=left:{$b$}] (m3) at (0,2) {};
\node [label=above:{$1$}] (m4) at (0,3) {};
    \draw  (m1) -- (m2);
      \draw  (m2) -- (m3);
        \draw  (m3) -- (m4);
  \draw [fill] (m1) circle [radius=.5mm];
  \draw [fill] (m2) circle [radius=.5mm];
    \draw [fill] (m3) circle [radius=.5mm];
      \draw [fill] (m4) circle [radius=.5mm];
     \node [label=below:{$0$}] (n1) at (2,0) {};
\node [label=right:{$x$}] (n2) at (2,1) {};
\node [label=above:{$1$}] (n3) at (2,2) {};
    \draw  (n1) -- (n2);
      \draw  (n2) -- (n3);
  \draw [fill] (n1) circle [radius=.5mm];
  \draw [fill] (n2) circle [radius=.5mm];
    \draw [fill] (n3) circle [radius=.5mm];
 \draw[->] (m1)--(n1);
  \draw[->] (m2)--(n2);
    \draw[->] (m3)--(n3);
      \draw[->] (m4)--(n3);
        \end{tikzpicture}
\end{center}
It can be easily checked that every homomorphic image of ${\alg G}_{4}$, that is not itself, collapses $b$ and $1$ as hinted at on the right-hand-side of the above figure. 
Hence, there is no homomorphism $j: {\alg G}_{4}\to {\alg G}_{3}$ such that $j\circ i$ is the identity, which entails that ${\alg G}_{3}$ is a 1-generated exact algebra that is not strongly projective. Therefore, $\vv{GA}$ does not have the 1ESP property.
\hfill $\Box$ \end{example}
Nonetheless, we can use Theorem \ref{corFinal} to prove that G\"odel algebras have unitary e-generalization type.
Indeed, all non-trivial finite G\"odel algebras are projective, and since G\"odel algebras are a locally finite variety, finite direct products of 1-generated exact algebras are finite. 
We can then conclude the following.
\begin{theorem}
The e-generalization type of G\"odel-Dummett logic and G\"odel algebras is unitary. 
\end{theorem}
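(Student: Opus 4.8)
The plan is to bypass the 1ESP machinery—which is unavailable here, since Example \ref{ex:GnotS} shows that $\vv{GA}$ fails to be 1ESP—and instead verify directly the hypothesis of Corollary \ref{corFinal}: that every finite direct product of 1-generated exact algebras in $\vv{GA}$ is finitely generated and projective. Because the equivalent algebraic semantics of G\"odel--Dummett logic is exactly $\vv{GA}$, once unitary type is established for the variety it transfers immediately to the logic, via the correspondence set up at the start of this section.

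First I would identify the relevant factors. Since $\vv{GA}$ is locally finite, the free algebra $\free_{\vv{GA}}(X)$ over a finite set $X$ is finite; any 1-generated exact algebra embeds into such a free algebra and is therefore finite. Moreover it is non-trivial: as a subalgebra of a free algebra it contains the constants $0$ and $1$, which are distinct in every non-trivial G\"odel algebra and in particular in any free one. Hence each factor $\alg E_k$ occurring in a problem is a finite, non-trivial G\"odel algebra. I would then note that a finite direct product $\prod_{k=1}^m \alg E_k$ of such algebras is again a finite G\"odel algebra, and is non-trivial because its bounds $(0,\dots,0)$ and $(1,\dots,1)$ differ.

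The decisive step is to invoke the cited fact that \emph{every} non-trivial finite G\"odel algebra is projective (\cite[Proposition 2.5]{DM06}, \cite[Theorem 5.17]{AgUg1}). Applied to the product $\prod_{k=1}^m \alg E_k$, which lies in this class, it yields projectivity of the product; being finite, the product is also finitely generated. With the hypothesis of Corollary \ref{corFinal} thus verified, that corollary delivers unitary $\kappa$-type for all cardinals $\kappa \le \omega$, and hence unitary e-generalization type for G\"odel--Dummett logic.

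The point requiring care, and the conceptual heart of the argument, is that projectivity is \emph{not} in general preserved under finite direct products, so one cannot argue factor by factor. What rescues the proof is the uniform strength of the cited result: since the class of \emph{all} non-trivial finite G\"odel algebras consists of projective algebras, mere membership of the product in this class already delivers projectivity, with no product-closure principle needed. This is exactly the feature that lets us reach unitary type through Corollary \ref{corFinal}, even though—unlike in the Boolean case—the stronger route via Corollary \ref{cor:anti1esp} is closed off by the failure of 1ESP.
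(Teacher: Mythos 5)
Your proof is correct and follows essentially the same route as the paper: both invoke Corollary \ref{corFinal} by combining local finiteness of $\vv{GA}$ with the fact that all non-trivial finite G\"odel algebras are projective, precisely because 1ESP fails (Example \ref{ex:GnotS}) and the congruence-based route is unavailable. The only difference is that you explicitly verify non-triviality of the exact factors and of their product (via the constants $0 \neq 1$), a point the paper leaves implicit, so your write-up is if anything slightly more complete.
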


More generally, one can prove uniformly the unitarity of the e-generalization type for all locally finite varieties of ($0$-bounded) {\em hoops}, of which G\"odel algebras and Boolean algebras are particular examples. 
Hoops are a variety of residuated monoids which, particularly in their $0$-bounded version, constitute the equivalent algebraic semantics of some of the most relevant many-valued logics. For instance, all logics in H\'ajek's framework, among which: \L ukasiewicz logic, G\"odel Dummett logic, Basic Logic (see \cite{Hajek,AFM2007}). 
While we preferred to isolate the above cases as a mean to better illustrate our methodology, and because they have a particular interest on their own, let us now present a coincise uniform argument for varieties of hoops.

The key is given by results in \cite{AgUg1}, specifically Theorem 5.3 and Corollary 5.15, which we summarize in the following statement.
\begin{theorem}[{\cite{AgUg1}}]\label{thm:aglianougolini-hoops}
	In every locally finite variety of ($0$-bounded) hoops, finitely generated projective algebras coincide with finite algebras (that have a homomorphism to the 2-element Boolean algebra). 
\end{theorem}
This allows one to apply Theorem \ref{corFinal}.
\begin{theorem}
	All locally finite varieties of hoops and $0$-bounded hoops have unitary e-generalization type.
\end{theorem}
\begin{proof}
	For locally finite variety of hoops, notice that finite direct products of $1$-generated algebras are finite in a locally finite variety, and hence projective by Theorem \ref{thm:aglianougolini-hoops}. Therefore, one can apply Theorem \ref{corFinal} to obtain the unitarity of the type.  
	
	For the $0$-bounded case, one can apply the same reasoning, modulo the observation that every exact algebra in a variety of $0$-bounded hoops has a homomorphism to the 2-element Boolean algebra, and therefore so does any direct product of them. Indeed, every exact algebra embeds into some free algebra by definition, and the $0$-generated free algebra is a quotient of any other free algebra; in any nontrivial variety of $0$-bounded hoops $\vv V$, $\alg F_{\vv V}(\emptyset)$ coincides with the 2-element Boolean algebra. This concludes the proof.
\end{proof}
Some of the most relevant cases, besides Boolean and G\"odel algebras, are summarized in the following corollary. 
\begin{corollary}\label{cor:locallyfin}
	The following varieties, and the corresponding many-valued logics, have unitary e-generalization type: all subvarieties of G\"odel algebras; all locally finite subvarieties of MV-algebras and BL-algebras; all subvarieties of G\"odel hoops; all locally finite subvarieties of Wajsberg hoops and basic hoops.\end{corollary}

\subsection{3-valued Kleene logic}\label{sec:kleene}
Let us now consider the 3-valued Kleene logic, also known as Kleene's ``strong logic of indeterminacy''; this is a generalization of classical logic which adds to the intended model of the logic a third indeterminate value to the truth and falsum constants \cite{Kleene09}.  Its algebraic semantics, the variety $\vv{KA}$  of Kleene algebras, is a subvariety of bounded distributive lattices with an involution $\neg$ \cite{Kalman}. In particular $\vv{KA}$ is axiomatized by $x \approx \neg\neg x$, {\em De Morgan identity} $x\wedge y \approx\neg(\neg x\lor \neg y)$, and {\em Kleene identity} $x\wedge \neg x\leq y\vee\neg y$. 
As it is well-known $\vv{KA}$  is generated by its standard model which is the  3-element algebra $\alg K_3=(\{0,a, 1\}, \wedge, \vee, \neg, 0, 1)$ where $0 < a < 1$ and  $\neg 0 = 1, \neg 1 = 0, \neg a = a$. 

The free $1$-generated algebra $\free_{\vv{KA}}(z)$ is 
depicted in Figure \ref{figKleene1}, together with its quotients: ${\bf 4}_{\vv{KA}}$ and ${\bf 2}_{\vv{KA}}$ (respectively, the 4-element and 2-element Boolean algebras seen as Kleene algebras), the Kleene chains ${\bf K}_{4}$ and ${\bf K}_{3}$, of four and three elements, and the trivial algebra.
\begin{figure}
\begin{center}
\begin{tikzpicture}

  \node [label=below:{${0}$}, label=below:{} ] (n1)  {} ;
  \node [above   of=n1,label=left:{${z\wedge\neg z}$}, label=below:{ }] (n2)  {} ;
    \node [above right  of=n2,label=right:{${ \neg z}$}, label=below:{ }] (n3)  {} ;
        \node [above left  of=n2,label=left:{${z}$}, label=below:{ }] (n4)  {} ;
           \node [above right  of=n4,label=left:{${z\vee \neg z}$}, label=below:{ }] (n5)  {} ;
                 \node [above   of=n5,label=above:{${1}$}, label=below:{ }] (n6)  {} ;

  \draw  (n1) -- (n2);
  \draw (n2) -- (n3);
\draw (n2)--(n4);
\draw (n5)--(n4);
\draw (n5)--(n6);
\draw (n5)--(n3);

  \draw [fill] (n1) circle [radius=.5mm];
  \draw [fill] (n2) circle [radius=.5mm];
  \draw [fill] (n3) circle [radius=.5mm];
  \draw [fill] (n4) circle [radius=.5mm];
    \draw [fill] (n5) circle [radius=.5mm];
    \draw [fill] (n6) circle [radius=.5mm]; 

\end{tikzpicture}
\begin{tikzpicture}

  \node [label=below:{}, label=below:{} ] (n1)  {} ;
  \node [above   of=n1,label=below:{$0$}, label=below:{ }] (n2)  {} ;
    \node [above right  of=n2,label=right:{}, label=below:{ }] (n3)  {} ;
        \node [above left  of=n2,label=left:{}, label=below:{ }] (n4)  {} ;
           \node [above right  of=n4,label=above:{${1}$}, label=below:{ }] (n5)  {} ;
                 \node [above   of=n5,label=right:{}, label=below:{ }] (n6)  {} ;

  \draw (n2) -- (n3);
\draw (n2)--(n4);
\draw (n5)--(n4);
\draw (n5)--(n3);

  \draw [fill] (n2) circle [radius=.5mm];
  \draw [fill] (n3) circle [radius=.5mm];
  \draw [fill] (n4) circle [radius=.5mm];
    \draw [fill] (n5) circle [radius=.5mm];

\end{tikzpicture}
\begin{tikzpicture}

  \node [label=below:{$0$}] (n1)  {} ;
  \node [above   of=n1,label=left:{}, label=below:{ }] (n2)  {} ;
    \node [above   of=n2,label=left:{}, label=below:{ }] (n3)  {} ;
        \node [above   of=n3,label=above:{${1}$}, label=below:{ }] (n4)  {} ;
           \node [above right  of=n4,label=left:{}, label=below:{ }] (n5)  {} ;
                 \node [above   of=n5,label=right:{}, label=below:{ }] (n6)  {} ;

  \draw  (n1) -- (n2);
  \draw (n2) -- (n3);
\draw (n3)--(n4);
  \draw [fill] (n1) circle [radius=.5mm];
  \draw [fill] (n2) circle [radius=.5mm];
  \draw [fill] (n3) circle [radius=.5mm];
  \draw [fill] (n4) circle [radius=.5mm];
 \end{tikzpicture}
\begin{tikzpicture}

  \node [label=below:{$0$}] (n1)  {} ;
  \node [above   of=n1,label=left:{$$}, label=below:{ }] (n2)  {} ;
    \node [above   of=n2,label=above:{${ 1}$}, label=below:{ }] (n3)  {} ;
           \node [above right  of=n3,label=left:{}, label=below:{ }] (n5)  {} ;
                 \node [above   of=n5,label=right:{}, label=below:{ }] (n6)  {} ;

  \draw  (n1) -- (n2);
  \draw (n2) -- (n3);
  \draw [fill] (n1) circle [radius=.5mm];
  \draw [fill] (n2) circle [radius=.5mm];
  \draw [fill] (n3) circle [radius=.5mm];
  \end{tikzpicture}
\begin{tikzpicture}
  \node [label=below:{$0$}] (n1)  {} ;
  \node [above   of=n1,label=above:{${ 1}$}, label=below:{ }] (n2)  {} ;
    \node [above   of=n2,label=left:{}, label=below:{ }] (n3)  {} ;
           \node [above right  of=n3,label=left:{}, label=below:{ }] (n5)  {} ;
                 \node [above   of=n5,label=right:{}, label=below:{ }] (n6)  {} ;
  \draw  (n1) -- (n2);
  \draw [fill] (n1) circle [radius=.5mm];
  \draw [fill] (n2) circle [radius=.5mm];
\end{tikzpicture}
\begin{tikzpicture}
  \node [label=below:{$0=1$}] (n1)  {} ;
  \draw [fill] (n1) circle [radius=.5mm];
\end{tikzpicture}
\end{center}
\caption{On the left-most, the free 1-generated Kleene algebra $\free_{\vv{KA}}(z)$; then from left-to-right its quotients 
${\bf 4}_{\vv{KA}}$ (generated by the congruence $\langle(z\vee \neg z), 1\rangle$), ${\bf K}_{4}$ (given by $\langle z, (z\vee \neg z)\rangle$ or $\langle z, (z\wedge \neg z)\rangle$), ${\bf K}_{3}$ (obtained by $\langle z, \neg z\rangle$), ${\bf 2}_{\vv{KA}}$ (from $\langle z,1\rangle$ $\langle \neg z,1\rangle$), and the trivial algebra (by $\langle 0,1\rangle$). }\label{figKleene1}
\end{figure}
For the next results we use the following characterization of exact Kleene algebras in \cite{CM15}:
\begin{lemma}[{\cite[Lemma 34]{CM15}}]\label{LemmaKleeneCM}
A Kleene algebra is exact iff it is non-trivial, $1$ is join irreducible, and it satisfies the quasi-equation
\begin{equation}\label{qeqCM15}
 \neg x \leq x, x \land \neg y \leq \neg x \lor y \mbox{ implies } \neg y \leq y.
 \end{equation}
\end{lemma}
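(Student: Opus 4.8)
The proof naturally splits along the two implications, and the first move is to reduce everything to finite algebras. Since finitely generated distributive lattices are finite and Kleene algebras are distributive lattices with an involution, $\vv{KA}$ is locally finite, so every exact algebra and every finitely generated free algebra $\free_{\vv{KA}}(n)$ is finite; I would therefore work throughout with finite Kleene algebras and pass to the finite natural duality for $\vv{KA}$ set up in \cite{CM15}, under which a finite algebra $\alg A$ corresponds to a finite dual structure $\alg A_*$ carrying the order-reversing involution $g$ induced by $\neg$. The two key dictionary entries are that $\free_{\vv{KA}}(n)$ dualizes to the $n$-th power $\underline{\alg K_3}^n$ of the alter ego of $\alg K_3$, and that embeddings of algebras correspond to surjective morphisms of dual structures. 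Consequently a finite Kleene algebra $\alg A$ is exact if and only if $\alg A_*$ is a quotient of some cube $\underline{\alg K_3}^n$, and the whole lemma becomes the dual statement: characterize those finite Kleene dual structures that arise as surjective images of such cubes.

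For the forward implication I would first dispose of the conditions that descend along subalgebras. Non-triviality is immediate, since a free Kleene algebra is non-trivial and any subalgebra contains $0 \neq 1$. The quasi-equation \eqref{qeqCM15} is a universal Horn sentence, hence inherited by subalgebras, and join-irreducibility of $1$ is likewise inherited downward, since any decomposition $1 = a \vee b$ with $a, b \neq 1$ inside a subalgebra is the very same decomposition in the overalgebra. It therefore suffices to verify that every free algebra $\free_{\vv{KA}}(n)$ has $1$ join-irreducible and satisfies \eqref{qeqCM15}. Both are cleanest dually: the cube $\underline{\alg K_3}^n$ has a distinguished extremal point whose presence forces $1$ to be join-irreducible in the dual algebra, while its coordinatewise involution is exactly of the shape needed to validate \eqref{qeqCM15}. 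For $n=1$ one can check this by hand on the six-element algebra of Figure \ref{figKleene1}, where the only $x$ with $\neg x \leq x$ are $z \vee \neg z$ and $1$, and the premise of \eqref{qeqCM15} then fails for every $y$ with $\neg y \not\leq y$.

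For the converse I would build the embedding dually. Given a finite $\alg A$ that is non-trivial, has $1$ join-irreducible, and satisfies \eqref{qeqCM15}, I would translate these hypotheses into structural features of $\alg A_*$: join-irreducibility of $1$ yields a distinguished point of $\alg A_*$ that must receive the extremal point of the cube; the Kleene identity $x \wedge \neg x \leq y \vee \neg y$ gives that every point of $\alg A_*$ is comparable with its image under $g$; and the quasi-equation controls precisely how $g$ interacts with the order. I would then take $n$ to be the number of points of $\alg A_*$ (or of its join-irreducibles), define a candidate surjection $\underline{\alg K_3}^n \twoheadrightarrow \alg A_*$ coordinatewise, and verify that it is a well-defined morphism of Kleene dual structures, i.e. order-preserving, $g$-preserving, and onto.

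The main obstacle is this last verification, and the quasi-equation \eqref{qeqCM15} is exactly the combinatorial hypothesis that makes it go through. Covering a Kleene dual structure by a cube while respecting the order alone is routine; the difficulty is to do so compatibly with the involution $g$, and a naive covering will in general fail to preserve $g$ precisely when \eqref{qeqCM15} is violated. Thus the heart of the argument is to show that the three conditions are not merely necessary but jointly sufficient to guarantee a $g$-compatible surjection from a cube, which is where the detailed analysis of \cite{CM15}, together with the comparability condition coming from the Kleene identity, does the real work.
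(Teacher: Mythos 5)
First, a point of order: the paper offers no proof of this lemma at all --- it is quoted verbatim as \cite[Lemma 34]{CM15} --- so there is no in-paper argument to compare yours against; your attempt has to stand on its own, and it does not yet. Your dual framework is set up sensibly (in a locally finite variety, exactness of a finite algebra amounts, under a strong natural duality, to its dual being a surjective image of a power of the alter ego, since the dual of $\free_{\vv{KA}}(n)$ is that power and embeddings dualize to surjections). But the sufficiency direction --- the actual content of the lemma --- is never proved: you describe what a proof would do (extract a distinguished point from join-irreducibility of $1$, build a coordinatewise map from a cube, check it is order- and involution-preserving), you correctly identify the verification as ``the main obstacle,'' and then you write that this is ``where the detailed analysis of \cite{CM15} \ldots does the real work.'' Since the statement you are proving \emph{is} Lemma 34 of \cite{CM15}, this is circular. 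Nothing in your text shows how \eqref{qeqCM15} is actually used to produce an involution-compatible surjection onto the dual structure, and that is the entire difficulty of this direction.

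The necessity direction also has a gap, though a reparable one. The reduction to free algebras is correct (all three conditions pass to subalgebras), but for general $n$ you merely assert that $\free_{\vv{KA}}(n)$ satisfies them, with a hand check only for $n=1$. This step cannot be waved through: $\alg K_3$ itself \emph{fails} \eqref{qeqCM15} (take $x = a$, the negation fixpoint, and $y = 0$ --- this is exactly the paper's remark that algebras with a fixpoint are not exact), so the conditions are not inherited from the generating algebra by $\SU$ and $\PP$, and the free algebra must be separated from arbitrary subalgebras of powers of $\alg K_3$ by a genuine property of term functions. A concrete way to finish: term functions $\alg K_3^n \to \alg K_3$ are monotone for Kleene's information order $\sqsubseteq$ (where $a$ lies strictly below the incomparable pair $0,1$) and map classical tuples into $\{0,1\}$, since $\{0,1\}$ is a subalgebra. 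For join-irreducibility of $1$: if $s \vee t = 1$ then at the all-$a$ assignment one of $s,t$, say $s$, takes value $1$; since the all-$a$ assignment is $\sqsubseteq$-least and $1$ is $\sqsubseteq$-maximal, $\sqsubseteq$-monotonicity gives $s \equiv 1$. For \eqref{qeqCM15}: if $\neg t \leq t$ fails, then $t(w)=0$ at some assignment $w$; choose a classical refinement $w^{*} \sqsupseteq w$; then $t(w^{*})=0$, while $s(w^{*}) \in \{0,1\}$ together with the premise $\neg s \leq s$ forces $s(w^{*})=1$, so the premise $s \wedge \neg t \leq \neg s \vee t$ reads $1 \leq 0$ at $w^{*}$, a contradiction. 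Some argument of this kind is what your phrase ``coordinatewise involution is exactly of the shape needed'' has to be replaced by before the forward implication can be considered proved.
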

We observe that if a Kleene algebra has a negation fixpoint $f=\neg f$ it is not exact, because the above quasi-equation (\ref{qeqCM15}) fails. Indeed, $\neg f\leq f$ and  $f=f\wedge \neg 0\leq \neg f\vee 0=\neg f$ but $\neg 0=1>0$. 
\begin{proposition}\label{KleeneExact}
The 1-generated exact Kleene algebras are $\free_{\vv{KA}}(z)$,  ${\bf K}_{4}$, and ${\bf 2}_{\vv{KA}}$, and they are all projective. 
\end{proposition}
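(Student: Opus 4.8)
The plan is to run through all the $1$-generated Kleene algebras, decide exactness of each by Lemma~\ref{LemmaKleeneCM}, and then verify projectivity of the ones that survive. Since a $1$-generated algebra is precisely a homomorphic image of the free $1$-generated algebra, these are, up to isomorphism, the six quotients of $\free_{\vv{KA}}(z)$ displayed in Figure~\ref{figKleene1}: the algebra $\free_{\vv{KA}}(z)$ itself, the Boolean diamond ${\bf 4}_{\vv{KA}}$, the chains ${\bf K}_4$ and ${\bf K}_3$, the two-element algebra ${\bf 2}_{\vv{KA}}$, and the trivial algebra. I would first discard three of them. The trivial algebra fails non-triviality; in ${\bf K}_3$ the middle element is a negation fixpoint, so this algebra is not exact by the observation made just before the statement; and in ${\bf 4}_{\vv{KA}}$ we have $1 = z \vee \neg z$ with both joinands strictly below $1$, so $1$ is not join irreducible and ${\bf 4}_{\vv{KA}}$ is not exact by Lemma~\ref{LemmaKleeneCM}. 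This leaves precisely $\free_{\vv{KA}}(1)$, ${\bf K}_4$, and ${\bf 2}_{\vv{KA}}$ as candidates, which settles the ``only'' direction.

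Two of these survivors are free, hence projective by Theorem~\ref{prop:proj-retract} and therefore exact at no cost: $\free_{\vv{KA}}(1) = \free_{\vv{KA}}(z)$ is the free $1$-generated algebra, while ${\bf 2}_{\vv{KA}}$ is the free $0$-generated Kleene algebra, since the subalgebra generated by the constants $0,1$ (with $0 \neq 1$, as holds in any free Kleene algebra) is exactly $\{0,1\}$.

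The only genuine work is ${\bf K}_4$, which I would show projective by realizing it as a retract of $\free_{\vv{KA}}(z)$ and invoking Theorem~\ref{prop:proj-retract}. First, $\{0,\, z \wedge \neg z,\, z \vee \neg z,\, 1\}$ is a subalgebra of $\free_{\vv{KA}}(z)$: it is a chain, hence closed under $\wedge$ and $\vee$, and $\neg$ swaps $0 \leftrightarrow 1$ and $(z \wedge \neg z) \leftrightarrow (z \vee \neg z)$, so it is closed under $\neg$; it is visibly isomorphic to the four-element Kleene chain ${\bf K}_4$, yielding an embedding $i$. For the retraction $j \colon \free_{\vv{KA}}(z) \to {\bf K}_4$ I would send the generator $z$ to the atom $b$ of ${\bf K}_4 = \{0 < b < a < 1\}$ (where $\neg b = a$); then $j(z) \wedge \neg j(z) = b \wedge a = b$ and $j(z) \vee \neg j(z) = b \vee a = a$ are exactly the $j$-images of $i(b) = z \wedge \neg z$ and $i(a) = z \vee \neg z$, so that $j \circ i = \mathrm{id}_{{\bf K}_4}$. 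Thus ${\bf K}_4$ is a retract of a free algebra, hence projective and a fortiori exact.

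The main obstacle is exactly this last point, the projectivity of ${\bf K}_4$; everything else is bookkeeping over Figure~\ref{figKleene1}. The delicate choice is the image of the generator under $j$: it must be picked so that the two nonconstant elements of the embedded copy are returned to themselves, whereas a careless choice would collapse them and break $j \circ i = \mathrm{id}$. As an independent check one can also confirm exactness of ${\bf K}_4$ directly against the quasi-equation~(\ref{qeqCM15}), but since projectivity already entails exactness this verification is optional.
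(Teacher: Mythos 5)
Your proof is correct and follows essentially the same route as the paper: the three candidates are shown projective (hence exact, being finitely generated) via freeness/retraction arguments, and the remaining quotients of $\free_{\vv{KA}}(z)$ are excluded exactly as the paper does, via triviality, the negation-fixpoint observation for ${\bf K}_{3}$, and the failure of join-irreducibility of $1$ in ${\bf 4}_{\vv{KA}}$ through Lemma \ref{LemmaKleeneCM}. The only cosmetic differences are that you obtain projectivity of ${\bf 2}_{\vv{KA}}$ by identifying it with the $0$-generated free Kleene algebra instead of exhibiting a retraction of $\free_{\vv{KA}}(z)$, and your retraction for ${\bf K}_{4}$ sends $z$ to the atom rather than the coatom, both of which work equally well.
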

\begin{proof}
Let us show that, among the 1-generated algebras depicted in Figure \ref{figKleene1}, only $\free_{\vv{KA}}(z)$, ${\bf 4}_{\vv{KA}}$, ${\bf K}_{4}$, ${\bf 2}_{\vv{KA}}$ are exact. First, $\free_{\vv{KA}}(z)$ is a retract of itself and it is hence exact  and projective. It is also clear that ${\bf 2}_{\vv{KA}}$ is a retract of $\free_{\vv{KA}}(z)$ and therefore it is projective and hence exact.

The same argument holds for ${\bf K}_{4}$. Indeed, naming its lattice reduct $0 < \neg x < x < 1$, consider the embedding into $\free_{\vv{KA}}(z)$ via the map $i$ such that $i(1)=1$, $i(x)=z\vee \neg z$, $i(\neg x)=z\wedge\neg z$, $i(0)=0$. The surjective homomorphism $j:\free_{\vv{KA}}(z)\to {\bf K}_{4}$ defined by $j(z) = j(z \lor \neg z) = x$ and $j(\neg z) = j(z \land \neg z) = \neg x$, composed with $i$ gives the identity on ${\bf K}_{4}$, which is therefore a retract of the free algebra hence projective. 

It is left to prove that the other 1-generated algebras are not exact. The trivial algebra is obviously not isomorphic to a subalgebra of any non-trivial algebra and therefore it cannot be exact. As for $\alg 4_{\vv{KA}}$, $1$ is not join irreducible and therefore the claim follows by Lemma \ref{LemmaKleeneCM}. Finally, ${\alg 3}_{\vv{KA}}$ is not exact since it has a negation fixpoint and, as discussed above, this implies a failure of identity (\ref{qeqCM15}) which is necessary for exactness by Lemma \ref{LemmaKleeneCM}.
\end{proof}
It follows in particular that $\vv{KA}$ is 1EP; we demonstrate that the stronger property 1ESP also holds. To this end, we make use of  the duality between  Kleene algebras and particular posets with involution  from \cite{CF77} and its adaptation presented in  \cite{BC13}.  Since we are only interested in finitely generated algebras and $\vv{KA}$ is locally finite, we consider the restriction of the duality to finite structures. 
\begin{definition}
	Let us call $\mathcal{PK}_f$ the category of {\em finite involutive posets} where:
\begin{enumerate}
\item The objects are structures $(P, \leq, \iota)$ where $(P,\leq)$ is a finite poset, and $\iota: P\to P$ is such that $x \leq y$ implies $\iota(y) \leq \iota(x)$,  $\iota(\iota(x)) = x$ and every $x$ is order comparable with $\iota(x)$, i.e. either $x\leq \iota(x)$ or $\iota(x)\leq x$.
\item The morphisms  are maps $f : (P,\leq,\iota) \to (P',\leq ',\iota')$ such that $x \leq y$ implies $f(x) \leq' f(y)$ and $f(\iota(x))=\iota'(f(x))$.
\end{enumerate}
\end{definition}

The functors implementing the duality extend those of Priestley duality \cite{Pr1,Pr2}. On objects, the functor maps a Kleene algebra $\alg A$ to the poset $P$ of its join irreducible elements, and the involution is defined as follows: for all $x\in P$, 
\begin{equation}\label{eqInv}
\iota(x)=\bigwedge A-\{\neg a: x\leq_{\alg A} a\}.
\end{equation}
In Figure \ref{figKleene3} we illustrate the duals of the 1-generated exact Kleene algebras.
\begin{figure}
\begin{center}
\begin{tikzpicture}

  \node [label=below:{}, label=below:{} ] (n1)  {} ;
  \node [above   of=n1,label=below:{${z\wedge\neg z}$}, label=below:{ }] (n2)  {} ;
    \node [above right  of=n2,label=right:{${ \neg z}$}, label=below:{ }] (n3)  {} ;
        \node [above left  of=n2,label=left:{${z}$}, label=below:{ }] (n4)  {} ;
           \node [above right  of=n4,label=above:{${1}$}, label=below:{ }] (n5)  {} ;
                 \node [above   of=n5,label=right:{}, label=below:{ }] (n6)  {} ;

  \draw (n2) -- (n3);
\draw (n2)--(n4);
\draw (n5)--(n4);
\draw (n5)--(n3);
\draw[->](n2) [bend left=25] to (n5);
\draw[->](n5) [bend left=25] to (n2);
      \draw [->] (n4) arc [radius=3mm, start angle=0, end angle= 340]  (n4);
            \draw [->] (n3) arc [radius=4mm, start angle=180, end angle= 510]  (n3);

  \draw [fill] (n2) circle [radius=.5mm];
  \draw [fill] (n3) circle [radius=.5mm];
  \draw [fill] (n4) circle [radius=.5mm];
    \draw [fill] (n5) circle [radius=.5mm];

\end{tikzpicture}
\hspace{.5cm}
\begin{tikzpicture}

  \node [label=below:] (n1)  {} ;
  \node [above   of=n1,label=below:{${\neg z}$}, label=below:{ }] (n2)  {} ;
    \node [above   of=n2,label=left:{${ z}$}, label=below:{ }] (n3)  {} ;
        \node [above   of=n3,label=above:{${1}$}, label=below:{ }] (n4)  {} ;
           \node [above right  of=n4,label=left:{}, label=below:{ }] (n5)  {} ;
                 \node [above   of=n5,label=right:{}, label=below:{ }] (n6)  {} ;

  \draw (n2) -- (n3);
\draw (n3)--(n4);
\draw[->](n2) [bend left=45] to (n4);
\draw[->](n4) [bend left=45] to (n2);
       \draw [->] (n3) arc [radius=2mm, start angle=180, end angle= 510]  (n3);
  \draw [fill] (n2) circle [radius=.5mm];
  \draw [fill] (n3) circle [radius=.5mm];
  \draw [fill] (n4) circle [radius=.5mm];
 \end{tikzpicture}
\hspace{.5cm}
\begin{tikzpicture}
  \node [label=below:{}] (n1)  {} ;
  \node [above   of=n1,label=below:{${ 1}$}, label=below:{ }] (n2)  {} ;
    \node [above   of=n2,label=left:{}, label=below:{ }] (n3)  {} ;
           \node [above right  of=n3,label=left:{}, label=below:{ }] (n5)  {} ;
                 \node [above   of=n5,label=right:{}, label=below:{ }] (n6)  {} ;
       \draw [->] (n2) arc [radius=4mm, start angle=270, end angle= 610]  (n2);
  \draw [fill] (n2) circle [radius=.5mm];
\end{tikzpicture}
\end{center}
\caption{The involutive posets dual of the 1-generated exact Kleene algebras $\free_{\vv{KA}}(1)$, ${\bf K}_{4}$ and ${\bf 2}_{\vv{KA}}$ respectively.}\label{figKleene3}
\end{figure}
The duals of projective Kleene algebras are characterized in \cite{BC13} as follows.
\begin{theorem}[{\cite[Theorem 12]{BC13}}]\label{thmProjKleene}
A Kleene algebra $\alg A$ is projective in $\vv{KA}$ iff its dual poset $(P,\leq ,\iota)$ in $\mathcal{PK}_f$ is such that:
\begin{enumerate}
\item\label{M2} For all $x\in P$, if $x\leq \iota(x)$ then there exists $y\in P$ such that $x\leq y=\iota(y)$;
\item\label{M3} $\{x\in P: x\leq \iota(x)\}$ is 3-complete\footnote{A poset $(Q,\leq)$ is 3-complete iff given any subset $X$ of $Q$, if the upper bound of every pair of elements in $X$ exists, then $\bigvee X$ exists in $(Q,\leq)$.}.
\item \label{K1}$\{x\in P: x\leq \iota(x)\}$ is a non-empty meet-semilattice.
\item\label{K2} For all $x,y\in P$ such that $x, y\leq \iota(x), \iota(y)$ there is a common upper bound $z\in P$ such that $z\leq \iota(z)$.
\end{enumerate}
\end{theorem}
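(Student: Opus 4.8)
The statement reproduces the characterization of \cite[Theorem 12]{BC13}, so one legitimate route is simply to cite it; but if I were to prove it from the tools already available here, the plan is to derive it from the retract description of projectivity (Theorem~\ref{prop:proj-retract}) transported across the duality between finitely generated Kleene algebras and $\mathcal{PK}_f$. Since $\vv{KA}$ is locally finite, the relevant $\alg A$ are finite, and by Theorem~\ref{prop:proj-retract} such an $\alg A$ is projective in $\vv{KA}$ if and only if it is a retract of some finite free algebra $\free_{\vv{KA}}(n)$. The duality is a contravariant equivalence, and contravariant equivalences send a splitting $j\circ i=\mathrm{id}_{\alg A}$ to a splitting $i^\ast\circ j^\ast=\mathrm{id}_{\alg A^\ast}$; hence $\alg A$ is a retract of $\free_{\vv{KA}}(n)$ exactly when its dual $(P,\leq,\iota)$ is a retract in $\mathcal{PK}_f$ of the dual $F_n:=(\free_{\vv{KA}}(n))^\ast$, i.e.\ there are $\mathcal{PK}_f$-morphisms $P\xrightarrow{\,s\,}F_n\xrightarrow{\,r\,}P$ with $r\circ s=\mathrm{id}_P$. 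The whole problem thereby reduces to an order-theoretic analysis of retracts inside $\mathcal{PK}_f$.

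First I would pin down $F_n$ explicitly from the Priestley-style functor together with the involution formula~\eqref{eqInv}: its fixpoint set $\{x:x\leq\iota(x)\}$ and its behaviour with respect to joins of pairwise-bounded families are the only features that matter. One checks directly that $F_n$ itself satisfies conditions~\ref{M2}--\ref{K2}. The ``only if'' direction then amounts to showing that each of the four conditions is inherited by retracts in $\mathcal{PK}_f$: a splitting $r\circ s=\mathrm{id}_P$ exhibits $(P,\leq,\iota)$ as an order- and involution-compatible summand of $F_n$, so meets, the relevant joins, and $\iota$-fixpoints are transported back along $r$ and $s$. Combined with the fact that $F_n$ already enjoys~\ref{M2}--\ref{K2}, this yields the conditions for $P$. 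This direction is essentially bookkeeping once the preservation lemmas for the $\iota$-preserving monotone maps are in place.

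The substantive direction is ``if''. Assuming $(P,\leq,\iota)$ satisfies~\ref{M2}--\ref{K2}, I would construct an embedding $s:P\to F_n$ and a retraction $r:F_n\to P$ for a suitable $n$. The four hypotheses are tailored to supply exactly the data needed to define $r$ coherently with $\iota$: non-emptiness and the meet-semilattice structure of the fixpoint set~\ref{K1} give a base point and meets to send joins to, the reachability of a fixpoint above each $x\leq\iota(x)$ in~\ref{M2} handles the self-dual part of the poset, while $3$-completeness~\ref{M3} and the common-upper-bound condition~\ref{K2} guarantee that every join $F_n$ creates can be returned into $P$ in an order- and involution-preserving way.

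I expect the construction and verification of the splitting $r$ in the ``if'' direction to be the main obstacle, since it is the one place where all of~\ref{M2}--\ref{K2} must be used at once and where one must check simultaneously that $r$ is a well-defined $\mathcal{PK}_f$-morphism (monotone and commuting with $\iota$) and that $r\circ s=\mathrm{id}_P$. Everything else is either formal transport across the contravariant duality or direct computation in the concrete poset $F_n$; note that the three exact algebras produced in Proposition~\ref{KleeneExact} and their duals in the accompanying figure can serve as sanity checks for both the explicit description of $F_n$ and the splitting construction.
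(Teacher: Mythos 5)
The paper offers no proof of this statement at all: it is imported verbatim as \cite[Theorem 12]{BC13}, which is exactly your first suggested route, so the citation \emph{is} the paper's proof and your proposal matches it. Your supplementary outline (local finiteness, the retract characterization of Theorem~\ref{prop:proj-retract}, and transport of splittings across the contravariant duality) is a reasonable reconstruction of how such a result would be established, but since you explicitly leave the substantive ``if'' direction unexecuted and the paper contains no argument to compare it against, the citation remains the operative approach.
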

We  denote by $\mathcal{PK}_{f}^p$ the set of posets that are dual to finite projective algebras in $\vv{KA}$.
\begin{theorem}\label{thmKleene1ESP}
$\vv{KA}$ has the 1ESP property.
\end{theorem}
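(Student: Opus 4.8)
The plan is to build on Proposition \ref{KleeneExact}, which already tells us that the $1$-generated exact Kleene algebras are exactly $\free_{\vv{KA}}(1)$, ${\bf K}_{4}$, and ${\bf 2}_{\vv{KA}}$, and that these are all projective. Since projectivity is in hand, it remains only to show each is \emph{strongly} projective, i.e. that every embedding of it into a projective Kleene algebra splits. First I would pass to the dual side via the involutive-poset duality $\mathcal{PK}_f$. Because this duality is contravariant (it extends Priestley duality), an embedding $i:\alg S\to\alg P$ dualizes to a surjective morphism $q:\hat{\alg P}\to\hat{\alg S}$ in $\mathcal{PK}_f$, a projective $\alg P$ dualizes to an object $\hat{\alg P}\in\mathcal{PK}_{f}^{p}$, and a retraction $j:\alg P\to\alg S$ with $j\circ i=\mathrm{id}_{\alg S}$ corresponds precisely to a \emph{section} $s:\hat{\alg S}\to\hat{\alg P}$, that is, a $\mathcal{PK}_f$-morphism with $q\circ s=\mathrm{id}_{\hat{\alg S}}$. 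Thus strong projectivity of $\alg S$ becomes the statement that \emph{every} surjection $q:\hat{\alg P}\twoheadrightarrow\hat{\alg S}$ from an object of $\mathcal{PK}_{f}^{p}$ admits a section. By Figure \ref{figKleene3} the relevant targets $\hat{\alg S}$ are: the single $\iota$-fixed point (for ${\bf 2}_{\vv{KA}}$); the three-element chain with fixed midpoint and swapped endpoints (for ${\bf K}_{4}$); and the diamond $d_0<d_1,d_2<d_3$ with fixed middle points $d_1,d_2$ and swapped bottom--top $\iota(d_0)=d_3$ (for $\free_{\vv{KA}}(1)$).

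The two structural facts I would isolate from the projectivity characterization of Theorem \ref{thmProjKleene} are condition (1), that every $x$ with $x\leq\iota(x)$ lies below some fixed point $y=\iota(y)$, and condition (3), that $\{x:x\leq\iota(x)\}$ is a non-empty meet-semilattice. Using these, the first step is to choose, over each $\iota$-fixed point $c$ of $\hat{\alg S}$, a fixed point of $\hat{\alg P}$ in its fibre: take any preimage $x$; by the comparability axiom of $\mathcal{PK}_f$ we may assume $x\leq\iota(x)$ (replacing $x$ by $\iota(x)$, which stays in the fibre as $c$ is fixed), lift to a fixed point $f\geq x$ using (1), and note that $q(f)$ is then an $\iota$-fixed point of $\hat{\alg S}$ with $q(f)\geq c$, which forces $q(f)=c$. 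This already disposes of the point-target ${\bf 2}_{\vv{KA}}$, since a single fixed point exists by (3) and (1), and it supplies the images of the fixed midpoints in both the chain and the diamond.

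For the remaining $\iota$-swapped points — a bottom $b$ and a top $\iota(b)$ in both the chain and the diamond — I would define $s(b)$ by a meet. In the chain case, a preimage $y$ of the bottom automatically satisfies $y\leq\iota(y)$ (otherwise $q(\iota(y))\leq q(y)$ would force $\iota(b)\leq b$), so $m:=y\wedge s(c)$ lies in the meet-semilattice of (3), lies below the already-chosen $s(c)$, and a short computation gives $q(m)=b$; in the diamond case I instead take $m:=p_1\wedge p_2$, the meet of the two fixed-point images chosen over $d_1,d_2$, which lies below both and has $q(m)=d_0$ since $q(m)\leq d_1,d_2$. Setting $s(\text{top}):=\iota(m)$ then produces an order-preserving, involution-commuting map with $q\circ s=\mathrm{id}_{\hat{\alg S}}$, which is the required section; dualizing back yields the retraction $j$. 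Hence all three exact algebras are strongly projective and $\vv{KA}$ is 1ESP.

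I expect the main obstacle to be exactly the order-compatibility of the chosen preimages: producing a preimage of a \emph{lower} element of $\hat{\alg S}$ that sits \emph{below} the fixed points already selected in $\hat{\alg P}$. This is precisely what the meet-semilattice condition (3) is for — meeting a raw preimage (or two fixed-point images) keeps the result inside $\{x\leq\iota(x)\}$ and pushes its $q$-image down into the correct bottom fibre, after which the involution coherently determines the top. I would also remark that conditions (2) and (4) of Theorem \ref{thmProjKleene} are not actually needed for these particular small targets.
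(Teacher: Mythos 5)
Your proposal is correct and takes essentially the same route as the paper's proof: you dualize strong projectivity to the existence of sections for surjections in $\mathcal{PK}_f$ onto the three dual posets, invoke exactly conditions (1) and (3) of Theorem \ref{thmProjKleene}, and build the sections by the same construction (a fixed point above a preimage of each $\iota$-fixed point, a meet inside $\{x : x\leq \iota(x)\}$ for the bottom element, and the involution of that meet for the top). The only difference is presentational: you package the three targets into one uniform recipe, whereas the paper treats ${\bf 2}_{\vv{KA}}$, ${\bf K}_{4}$, and $\free_{\vv{KA}}(1)$ as separate cases, but the underlying constructions coincide.
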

\begin{proof}
By Lemma \ref{KleeneExact} we need to check that $\free_{\vv{KA}}(1)$, ${\bf K}_{4}$, and ${\bf 2}_{\vv{KA}}$ are strongly projective. Using the above duality a projective algebra $\alg A$ is strongly projective iff its corresponding poset $(Q,\leq, \iota)$ is such that: whenever there is a surjective morphism $\varphi:(P, \leq', \iota')\to (Q,\leq ,\iota)$ with $(P, \leq', \iota')$ the dual of another finitely generated projective algebra, there is $\psi: (Q, \leq, \iota)\to (P,\leq' ,\iota')$ such that $\varphi\circ\psi$ is the identity on $(Q,\leq, \iota)$. 

Let us fix an arbitrary $\mathcal{P}=(P,\leq, \iota)\in \mathcal{PK}_{f}^p$ that will be used in the rest of the proof. We start analyzing the case of ${\bf 2}_{\vv{KA}}$ whose dual is the 1-element involutive poset $\mathcal{U}=(\{1\}, =, id)$. Suppose there exists a surjective morphism $\varphi: \mathcal{P}\to \mathcal{U}$. Fix  $x\in P$ then either $x\leq \iota(x)$ or $\iota(x)\leq x=\iota\iota(x)$. In either case, by (\ref{M2}) in Theorem \ref{thmProjKleene}, there exists $y\in P$ such that $y=\iota(y)$. Let then $\psi:\mathcal{U}\to \mathcal{P}$ be defined by $\psi(1)=y$. Thus, $\psi$ is a morphism such that $\varphi\circ\psi$ is the identity map. This shows that ${\bf 2}_{\vv{KA}}$ is strongly projective.

Now we consider ${\bf K}_{4}$, whose dual is the poset $\mathcal{Q}=(\{1, z, \neg z\}, \leq , \iota_Q)$ where $\neg z\leq z\leq 1$ and $\iota_Q(1)=\neg z$, $\iota_Q(z)=z$ and $\iota_Q(\neg z)=1$ as in Figure \ref{figKleene3}. Suppose there exists $\varphi: \mathcal{P}\to\mathcal{Q}$. Let us start considering any $b\in \varphi^{-1}(\neg z)$; then $b<\iota(b)$ since otherwise, it would be $\iota(b) \leq b$ and then 
$$
1=\iota_Q(\neg z)=\iota_Q(\varphi(b))=\varphi(\iota(b))\leq \varphi(b)=\neg z,
$$ 
a contradiction. Now, one can find an element $d$ that is an involution fixpoint and such that $\varphi(d)=z$. Indeed, consider any $c\in \varphi^{-1}(z)$; then either $c\leq \iota(c)$ or $\iota(c)\leq c = \iota\iota(c)$. Either way, by (\ref{M2}) in Theorem \ref{thmProjKleene}, there exists $d$ such that $d=\iota(d)$ therefore necessarily $\varphi(d)=z$ since it is the only negation fixpoint. Note that $d, b\in \{x\in P: x\leq \iota(x)\}$ which is a meet-semilattice by (\ref{K1}) in Theorem \ref{thmProjKleene}. Thus, $d\wedge b$ is such that $d\wedge b\leq \iota(d\wedge b)$. Moreover, $d\wedge b\leq d$ implies $d=\iota(d)\leq \iota(d\wedge b)$. Therefore, in $\mathcal{P}$, $d\wedge b\leq d\leq \iota(d\wedge b)$. Let us check the $\varphi$-images of these elements. By order-preservation, $\varphi(d\wedge b)\leq \varphi(b)=\neg z$. Therefore, also $\varphi(d\wedge b)=\neg z$, and since $d\wedge b\leq b$ and $\iota$ is order-reversing, we get that $\iota(b)\leq \iota(d\wedge b)$. Hence 
$$
1= \iota_Q(\neg z) = \iota_Q(\varphi(b)) =\varphi(\iota(b))\leq \varphi(\iota(d\wedge b)),
$$ 
whence $\varphi(\iota(d\wedge b))=1$. We can then define the map $\psi:\mathcal{Q}\to\mathcal{P}$ in the following way:
$$
\psi(\neg z)=d\wedge b;\;\; \psi(z)=d;\;\; \psi(1)=\iota(d\wedge b).
$$
Such a $\psi$ is order-preserving since $d\wedge b\leq d\leq \iota(d\wedge b)$ and it commutes with $\iota$ by direct computation. Moreover, $\varphi\circ \psi$ is the identity since we have seen that $\varphi(d\wedge b)=\neg z$; $\varphi(d)=z$ and $\varphi(\iota(d\wedge b))=1$. Therefore also ${\bf K}_{4}$ is strongly projective.

Finally, let us consider the case of $\free_{\vv{KA}}(1)$ whose dual poset is $\mathcal{D}=(\{1, z, \neg z, z\wedge \neg z\},\leq, \iota_D)$ where, in particular, $\iota_D(z)=z$; $\iota_D(\neg z)=\neg z$; $\iota_D(1)=z\wedge \neg z$ and $\iota_D(z\wedge \neg z)=1$ (see Figure \ref{figKleene3}). Let $\varphi: \mathcal{P}\to \mathcal{D}$. Using (\ref{M2}) in Theorem \ref{thmProjKleene}, one can find $a, b\in P$ such that $a=\iota(a)$, $b=\iota(b)$ and $\varphi(a)=z$, $\varphi(b)=\neg z$. Now, $a, b\in \{x\in P: x\leq \iota(x)\}$ and therefore by (\ref{K1}) in Theorem \ref{thmProjKleene} there exists $a\wedge b$ and it is such that $a\wedge b\leq \iota(a\wedge b)$. Note that, by order-preservation, $\varphi(a\wedge b)\leq \varphi(a)=z$, $\varphi(a\wedge b)\leq \varphi(b)=\neg z$. Therefore, necessarily, $\varphi(a\wedge b)=z\wedge \neg z$. Moreover, $\varphi(\iota(a\wedge b))=\iota_D(\varphi(a\wedge b))=\iota_D(z\wedge \neg z)=1$. Finally note that $a\wedge b\leq a, b$ implies $a=\iota(a)\leq \iota(a\wedge b)$ and similarly $b=\iota(b)\leq \iota(a\wedge b)$. Hence, define $\psi:\mathcal{D}\to\mathcal{P}$ to be the map
$$
\psi(z)=a; \;\;\psi(\neg z)=b;\;\; \psi(1)=\iota(a\wedge b);\;\; \psi(z\wedge \neg z)=a\wedge b.
$$
Then, $\psi$ preserves the order because we have shown that both $a, b\leq \iota(a\wedge b)$ and of course $a\wedge b\leq a, b$. Moreover, $\psi$ commutes with $\iota$ as it can be verified by direct computation and the composition with $\varphi$ is the identity by construction. Thus $\psi$ is a morphism that witnesses the fact that $\free_{\vv{KA}}(1)$ is strongly projective. This concludes the proof.
\end{proof}
By Theorem \ref{thm1ESP} and Corollary \ref{cor:anti1esp} we can hence study the e-generalization type via the lattice of congruences of $\free_{\vv{KA}}(z)$ 
which is is depicted  in Figure \ref{figKleene2},  with highlighted the projective congruences.

\begin{figure}
\begin{center}

\begin{tikzpicture}

  \node [label=below:{\tiny$\Delta$}] (n1)  {} ;
  \node [above   of=n1,label=below:{\tiny${\langle z\vee \neg z, 1 \rangle}$}, label=below:{ }] (n2)  {} ;
    \node [left   of=n2,label=left:{\tiny${\langle z, z\vee\neg z \rangle}$}, label=below:{ }] (n3)  {} ;
        \node [right   of=n2,label=right:{\tiny${\langle z, z\wedge \neg z \rangle}$}, label=below:{ }] (n4)  {} ;
                \node [above   of=n2,label=above:{\tiny${\langle z,\neg z \rangle}$}, label=below:{ }] (n5)  {} ;
                \node [above   of=n3,label=left:{\tiny${\langle z,1 \rangle}$}, label=below:{ }] (n6)  {} ;
                \node [above   of=n4,label=right:{\tiny${\langle z,0 \rangle}$}, label=below:{ }] (n7)  {} ;
                  \node [above   of=n5,label=above:{\tiny${\langle 0, 1 \rangle}$}, label=below:{ }] (n8)  {} ;

  \draw  (n1) -- (n2);
  \draw (n1) -- (n3);
\draw (n1)--(n4);
\draw (n3)--(n5);
\draw (n3)--(n6);
\draw (n2)--(n6);
\draw (n2)--(n7);
\draw (n4)--(n5);
\draw (n4)--(n7);
\draw (n5)--(n8);
\draw (n6)--(n8);
\draw (n7)--(n8);
  \draw [fill] (n1) circle [radius=.5mm];
  \draw [fill] (n2) circle [radius=.5mm];
  \draw [fill] (n3) circle [radius=.5mm];
  \draw [fill] (n4) circle [radius=.5mm];
    \draw [fill] (n5) circle [radius=.5mm];
      \draw [fill] (n6) circle [radius=.5mm];
        \draw [fill] (n7) circle [radius=.5mm];
          \draw [fill] (n8) circle [radius=.5mm];
             \draw  (n1) circle [radius=.8mm];
                \draw  (n3) circle [radius=.8mm];
                   \draw  (n4) circle [radius=.8mm];
                      \draw  (n6) circle [radius=.8mm];
                         \draw  (n7) circle [radius=.8mm];
 \end{tikzpicture}

\end{center}
\caption{The lattice of congruences of $\free_{\vv{KA}}(z)$,  with highlighted the projective congruences.}\label{figKleene2}
\end{figure}

\begin{theorem}
The e-generalization type of $\vv{KA}$ and 3-valued Kleene logic is unitary. 
\end{theorem}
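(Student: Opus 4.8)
The plan is to combine the structural facts already proved for $\vv{KA}$ with a finite inspection of the congruence lattice of $\free_{\vv{KA}}(z)$ drawn in Figure \ref{figKleene2}. Since $\vv{KA}$ is 1ESP (Theorem \ref{thmKleene1ESP}), Corollary \ref{cor:anti1esp} guarantees that for every algebraic e-generalization problem $h:\free_{\vv{KA}}(z)\to\prod_{k=1}^m\alg E_k$ the e-generalization type of $h$ coincides with the type of $\Gen(h)$. As 1ESP entails 1EP, Corollary \ref{cor:1EPchar} identifies $\Gen(h)$ with the set of projective congruences of $\free_{\vv{KA}}(z)$ lying below $\ker(h)$. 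Thus the whole statement reduces to a finite combinatorial claim: for every congruence $\theta$ that can arise as $\ker(h)$, the set of projective congruences contained in $\theta$ has a greatest element, so that $\Gen(h)$ has a maximum and the type of $h$ is unitary.

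First I would pin down the two relevant families of congruences in the eight-element lattice. By Proposition \ref{KleeneExact} the exact $1$-generated Kleene algebras are exactly $\free_{\vv{KA}}(1)$, ${\bf K}_{4}$ and ${\bf 2}_{\vv{KA}}$, and all three are projective; hence in $\vv{KA}$ the exact congruences coincide with the projective ones, namely the five circled congruences $\Delta$, $\langle z, z\vee\neg z\rangle$, $\langle z, z\wedge\neg z\rangle$, $\langle z,1\rangle$ and $\langle z,0\rangle$. Next, by Theorem \ref{propThetat}, the congruences eligible to be $\ker(h)$ (the E-congruences) are precisely the finite intersections of exact congruences, so I would compute the meet-closure of these five congruences. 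Every pairwise intersection is again one of the circled congruences except for the single case $\langle z,1\rangle\cap\langle z,0\rangle=\langle z\vee\neg z,1\rangle$ (the non-projective quotient ${\bf 4}_{\vv{KA}}$); a short check shows no further intersections appear. Hence the E-congruences are exactly the six congruences $\Delta$, $\langle z\vee\neg z,1\rangle$, $\langle z, z\vee\neg z\rangle$, $\langle z, z\wedge\neg z\rangle$, $\langle z,1\rangle$, $\langle z,0\rangle$.

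The last step is the verification itself, read off from Figure \ref{figKleene2}. Below $\Delta$ and below $\langle z\vee\neg z,1\rangle$ the only projective congruence is $\Delta$; below $\langle z, z\vee\neg z\rangle$ and below $\langle z, z\wedge\neg z\rangle$ the congruences themselves are the greatest projective ones; below $\langle z,1\rangle$ (respectively $\langle z,0\rangle$) the greatest projective congruence is $\langle z,1\rangle$ (respectively $\langle z,0\rangle$). In each case $\Gen(h)$ has a maximum, so its type, and therefore the type of $h$, is unitary. Since every e-generalization problem of $\vv{KA}$ is of this form, $\vv{KA}$, and hence $3$-valued Kleene logic, has unitary e-generalization type.

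The subtle point, and the step I would flag as the real obstacle, is why this succeeds at all. Among the eight congruences, precisely two have a projective down-set with \emph{no} maximum: $\langle z,\neg z\rangle$ and the total congruence $\nabla$, since below each of them $\langle z, z\vee\neg z\rangle$ and $\langle z, z\wedge\neg z\rangle$ sit as incomparable maximal projective congruences. What rescues the argument is that these two ``bad'' congruences are exactly the two congruences that are \emph{not} E-congruences: a representation as a finite intersection of exact congruences would require exact congruences each containing the given one, but the only congruences containing $\langle z,\neg z\rangle$ (respectively $\nabla$) are itself and $\nabla$ (respectively $\nabla$), none of which is exact. Thus neither can equal any $\ker(h)$, and unitarity is never threatened. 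It is worth stressing that one cannot shortcut the proof through Corollary \ref{corFinal}: finite products of $1$-generated exact Kleene algebras need not be projective. For instance the dual of ${\bf K}_{4}\times{\bf K}_{4}$ is a disjoint union of two copies of the dual of ${\bf K}_{4}$, for which the set $\{x : x\leq\iota(x)\}$ is a pair of disjoint chains and hence fails to be a meet-semilattice, violating condition (\ref{K1}) of Theorem \ref{thmProjKleene}. This is exactly why the 1ESP route through the congruence lattice is necessary.
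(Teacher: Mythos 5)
Your proof is correct and follows essentially the same route as the paper's: reduce via the 1ESP property (Theorem \ref{thmKleene1ESP}) and Corollary \ref{cor:anti1esp} to the type of $\Gen(h)$, identify the E-congruences as finite intersections of the five exact (= projective) congruences via Theorem \ref{propThetat}, and check by inspection of Figure \ref{figKleene2} that each such congruence has a greatest projective congruence below it. Your added verifications --- that the two congruences whose projective down-sets lack a maximum, $\langle z,\neg z\rangle$ and $\nabla$, are precisely the non-E-congruences, and that Corollary \ref{corFinal} is unavailable since products such as ${\bf K}_4\times{\bf K}_4$ fail projectivity --- are correct elaborations of claims the paper makes only implicitly.
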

\begin{proof}
Since $\vv{KA}$ has the 1ESP property, by Corollary \ref{cor:anti1esp} the type of any algebraic e-generalization problem $h: \free_{\vv{KA}}(z) \to \prod_{k=1}^m \alg E_k$ is the type of $\Gen(h)$, which consists of the cardinality of a complete maximal set of projective congruences below $\ker(h)$. By Theorem \ref{propThetat} the E-congruences of $\free_{\vv{KA}}(z)$, which can appear as $\ker(h)$, are the finite intersection of projective ones; by direct inspection of the lattice of congruences of $\free_{\vv{KA}}(z)$ depicted in Figure \ref{figKleene2}, the E-congruences are all the projective ones plus the one given by $\langle z\vee\neg z, 1\rangle$ (which only appears for $m \geq 2$, while the other ones appear for problems with any $m \geq 1$). The other congruences are neither exact nor intersection of exact ones. Therefore there is always one maximal projective congruence in $\Gen(h)$ (in fact, in all cases except one, $\ker(h)$ is projective itself), which yields unitary type of $\Gen(h)$ and consequently of every e-generalization problem $h$.
\end{proof}

\subsection{\luk\ logic}\label{sec:luk}
\luk\ infinite-valued logic is one of the most well-studied many-valued logics; its equivalent algebraic semantics is the variety $\vv{MV}$ of MV-algebras. The latter is generated by the {\em standard MV-algebra} over the real unit interval $[0,1]$, in the language $\{\oplus, \neg, 0 , 1\}$, and operations defined as $x \oplus y = \min\{x + y, 1\}$, $\neg x = 1 - x$.
MV-algebras enjoy a deep connection with geometrical objects, which has proven extremely fruitful to study logical properties of \luk\ logic. In particular, finitely presented structures are dually equivalent, respectively, to categories whose objects are closed rational polyhedra \cite{MS13}.

More precisely, the finitely presented members of $\vv{MV}$ form a category that is a full subcategory of $\vv{MV}$ itself, whence its morphisms are homomorphisms between finitely presented MV-algebras. 
On the other hand, a {\em rational polytope} of $[0,1]^m$ is the convex hull of finitely many points $q_1,\ldots, q_n\in [0,1]^m$ having rational coordinates. A {\em rational polyhedron} $\mathscr{R}$ of $[0,1]^m$ is the (set theoretic) union of finitely many rational polytopes of $[0,1]^m$. A function $\zeta$ between two rational polyhedra $\mathscr{R}_1\subseteq[0,1]^m$ and $\mathscr{R}_2\subseteq[0,1]^k$ is said to be a {\em $\mathbb{Z}$-map} if there are functions $\zeta_1,\ldots, \zeta_k:[0,1]^m\to[0,1]$ that are continuous, piecewise linear, and with each piece having only integer coefficients, and such that
$$
\zeta: (q_1,\ldots, q_m)\in \mathscr{R}_1\mapsto (\zeta_1(q_1,\ldots, q_m), \ldots, \zeta_k(q_1,\ldots, q_m))\in \mathscr{R}_2.
$$
Conforming to the standard notation, we will call {\em McNaughton functions} (cf. \cite{McN}) those $\mathbb{Z}$-maps $\zeta:[0,1]^m\to[0,1]$, for some $m \geq 1$.

Rational polyhedra with $\mathbb{Z}$-maps form the category $\mathcal{RP}$ that turns out to be dual to that of finitely presented MV-algebras, \cite[Theorem 3.4]{MS13}; in particular, $n$-generated finitely presented algebras correspond to polyhedra in $[0,1]^n$. We also note that monomorphisms and epimorphisms in $\mathcal{RP}$ are, respectively, injective and surjective $\mathbb{Z}$-maps \cite[Theorem 3.2]{CabrerForum}. Moreover, surjective homomorphisms on the algebraic side correspond to {\em strict $\Z$-maps} on the geometric side, that is to say, $\Z$-maps that are homeomorphisms onto their range \cite[Theorem 3.5]{CabrerForum}.  We will henceforth refer to this duality as {\em Marra-Spada duality} by the names of the authors of \cite{MS13}.

Therefore, one can translate generalization problems and their solutions in the dual setting. It will be convenient to use the alternative presentation outlined in Remarks \ref{remark:equivalentpresentation} and \ref{remark:freeinsteadofprojective}. We then consider an algebraic problem to be defined by a tuple of surjective homomorphisms $\{h_k\}_{k = 1}^m: \alg F_{\vv{MV}}(z) \to \alg E_k$, and a solution to be a homomorphism to some finitely generated free algebra $g: \alg F_{\vv{MV}}(z) \to \alg F_{\vv{MV}}(Y)$, testified by a tuple $\{f_k\}_{k = 1}^m$ such that $f_k: \alg F_{\vv{MV}}(Y) \to \alg E_k$ and $h_k = f_k \circ g$. 

On the dual side, the above presentation yields the following geometric reading. A problem is going to be represented by a family of strict $\Z$-maps $\eta_k: \scr E_k \to [0,1]$ where each $\scr E_k \sse [0,1]$ is the dual of a one-generated exact algebra, for $k = 1 \ldots m$, while, as recalled above, $[0,1]$ is the dual of $\alg F_{\vv{MV}}(z)$. A solution is going to be a $\Z$-map $\gamma: [0,1]^n \to [0,1]$ for some $n \in \mathbb{N}$, such that there exists a family $\varphi_k: \scr E_k \to [0,1]^n$ of strict $\Z$-maps such that $\eta_k = \gamma \circ \varphi_k$.

The generality order also obviously translates to this setting. In particular, we write $\gamma \sqsubseteq \gamma'$ if and only if there is $\alpha$ such that $\gamma = \gamma' \circ \alpha$.

The proposition below entails that $\vv{MV}$ has the 1EP property. 
\begin{proposition}\label{propMVInter}
1-generated exact MV-algebras coincide with 1-generated projective MV-algebras. Moreover, their dual polyhedra are exactly the closed intervals of the real valued unit interval $[0,1]$ of the kind $[0, q]$ or $[r, 1]$ with $q, r\in [0,1]\cap\mathbb{Q}$.
\end{proposition}
\begin{proof}
For the first claim suffices to show that 1-generated exact implies projective since all projective algebras are exact. Since finitely generated exact  MV-algebras are finitely presented (see \cite[Lemma 2.8]{CabrerForum}), we can apply the Marra-Spada duality, and consider the corresponding dual polyhedron $\mathscr{P}$ in $[0,1]$.  Now, by \cite[Lemma 4.12]{CabrerForum}, dual polyhedra of exact MV-algebras are connected and contain a point whose coordinates are all in $\{0,1\}$; this means that $\mathscr{P}$ is a connected subinterval of $[0,1]$ that contains either $0$ or $1$. Thus, it is necessarily of the kind $[0, q]$ or $[r, 1]$ with $q, r\in [0,1]\cap\mathbb{Q}$. These are exactly the 1-generated projective MV-algebras as shown in \cite[Lemma 4.2]{MS13}. This proves both claims.
\end{proof}
However, $\vv{MV}$ does not have the 1ESP property as the following example shows.

\begin{example}
Let us consider the two (closed) rational intervals $[0,1/4]$ and $[0, 1/2]$ which are duals to projective MV-algebras denoted $\alg P_{1/4}$ and $\alg P_{1/2}$. There is a surjective $\mathbb{Z}$-map (i.e., an epimorphism) $\varphi:[0,1/4]\to[0,1/2]$ given by $\varphi(x)=2x$. Notice that, however, there is no $\mathbb{Z}$-map $\psi:[0,1/2]\to[0,1/4]$ such that $\varphi\circ \psi$ is the identity map. Indeed, if there were such a $\psi$, it would hold in particular that $\varphi(\psi(1/2))=1/2$, i.e., necessarily $\psi(1/2)=1/4$. However, since $\mathbb{Z}$-maps are linear  with integer coefficients, such $\psi$ cannot exist, given that necessarily $\psi(1/2) \in \{0, 1/2, 1\}$. Via  duality, this implies that while there is an embedding (i.e., a monomorphism) from $h:\alg P_{1/2}\to\alg P_{1/4}$, there is no homomorphism $g:\alg P_{1/4}\to\alg P_{1/2}$ such that $g\circ h$ is the identity. This provides a failure of the 1ESP.
\hfill $\Box$ \end{example}

Even without the 1ESP, we will see that in many cases one can restrict to consider geometric solutions with domain $[0,1]$, equivalently, symbolic solutions of one variable, in order to study the e-generalization type of a problem.

Let us build some intuition about e-generalization problems in MV-algebras. Note that $\vv{MV}$, in contrast with the previous varieties under consideration, is not locally finite, and actually the 1-generated free algebra is isomorphic to an infinite countable algebra of continuous function (cf. \cite{McN,Mundicibook}), with a complicated lattice order. However, by Corollary \ref{thm1ESP}, the order between projective congruences is enough to give a rough description of the poset of solutions of a fixed e-generalization problem. 
In order to describe this order, we prepare.

We call $\mathcal{PI}$ the set of intervals of the kind $[0, q]$ or $[r, 1]$ with $q, r\in [0,1]\cap\mathbb{Q}$ which, by Proposition \ref{propMVInter}, correspond to 1-generated exact or projective MV-algebras.
By the duality, the poset of exact congruences is isomorphic to the poset on $\mathcal{PI}$ ordered by reverse inclusion\footnote{The reader may check Section 3.4 in \cite{Cignolietal} for the connection between congruences of free MV-algebras and rational polyhedra, which arise as $0$-sets of McNaughton functions. In particular, the desired dual order isomorphism follows from Lemma 3.4.8 therein.} and hence it looks as in the left-hand side of Figure \ref{fig:MV1gen}. Moreover, since the kernels of generalization problems (i.e., E-congruences) are finite intersections of exact congruences (Theorem \ref{propThetat}), which correspond to the union of the corresponding polyhedra, we see their poset on the right-hand side of Figure \ref{fig:MV1gen}.
\begin{figure}
\begin{center}
$\xymatrix{
\{0\}\ar@{--}[d] && \{1\}\ar@{--}[d]\\
[0, q] \ar@{--}[dr]&& [r,1]\ar@{--}[dl]\\
&[0,1]&
}$\hspace{2cm}	
$\xymatrix{
\{0\}\ar@{--}[d]\ar@{--}[dr] && \{1\}\ar@{--}[d]\ar@{--}[dl]\\
[0, q] \ar@{--}[dr]\ar@{--}[ddr]& \{0\} \ar@{--}[d]\cup \{1\} & [r,1]\ar@{--}[ddl]\ar@{--}[dl]\\
&[0,q] \cup [r,1] \ar@{--}[d]&\\
&[0,1]&
}
$	
\end{center}
\caption{On the left-hand side, the poset of exact congruences of $\alg F_{\vv{MV}}(z)$. On the right, the poset of E-congruences for $\vv{MV}$. Given a one-generated exact congruence $\theta$, it is labeled in the posets above by the rational polyhedron associated via the duality to the finitely presented MV-algebra $\alg F_{\vv{MV}}(z)/\theta$.}\label{fig:MV1gen}
\end{figure}

Precisely, consider a geometric problem, represented by a family of strict $\Z$-maps $\{\eta_k\}_{k = 1}^m$, with $\eta_k: \scr E_k \to [0,1]$ and  $\scr E_k \sse [0,1]$ being as in Proposition \ref{propMVInter} 
for $k = 1, \ldots, m$.  To ease the reading we will henceforth name {\em exact intervals} the intervals like the $\scr E_k$'s that are dual to $1$-generated exact algebras. In the poset on the right of Figure \ref{fig:MV1gen}, the problem $\{\eta_k\}_{k = 1}^m$ is associated to the polyhedron $$\cc P(\{\eta_k\}_{k = 1}^m) = \bigcup_{k = 1}^m\eta_k[\scr E_k] \sse [0,1],$$ given by the union of the exact intervals determined over $[0,1]$ by the images of the maps $\eta_k$.

We are ready to show that, in many cases, the type of a solution can be fully determined by restricting to solutions to the one-generated algebra, or, from the symbolic side, to one-variable solutions.

\begin{lemma}\label{prop:MVlessvar}
	Given a problem $\{\eta_k\}_{k = 1}^m$ such that $\cc P(\{\eta_k\}_{k = 1}^m) \neq [0,1]$, and a solution $\gamma: [0,1]^n \to [0,1]$, there is a solution $\gamma': [0,1]\to [0,1]$ that is less general than $\gamma$.
\end{lemma}
\begin{proof}
	Consider a problem $\{\eta_k\}_{k = 1}^m$ and a solution $\gamma: [0,1]^n \to [0,1]$, testified by the strict $\Z$-maps $\{\varphi_k\}_{k = 1}^m$, i.e., $\eta_k = \gamma \circ \varphi_k$. 
Notice that since all maps $\varphi_k$ and $\eta_k$ are strict $\Z$-maps, by definition they are $\Z$-homeomorphisms onto their images. 

Moreover, since $\cc P(\{\eta_k\}_{k = 1}^m) \neq [0,1]$ by hypothesis, $\eta_k[\scr E_k]$ includes either $\{0\}$ or $\{1\}$ for each $k \in \{1, \ldots, m\}$. Let us assume that $\cc P(\{\eta_k\}_{k = 1}^m) = \bigcup_{k = 1}^m\eta_k[\scr E_k]$ includes both $\{0\}$ and $\{1\}$, since the following proof can be easily adapted to the case where one of the two extrema is not included.
Let now $$H_0 = \{k \in \{1, \ldots, m\}: 0 \in \eta_k[\scr E_k]\}, \;\;H_1 = \{k \in \{1, \ldots, m\}: 1 \in \eta_k[\scr E_k]\}.$$
Then $H_0$ and $H_1$ partition $\{1, \ldots, m\}$. Given the finite number of maps, there is some $k_0 \in H_0$ such that 
$$\eta_{k_0}[\scr E_{k_0}] =\bigcup_{k \in H_0}\eta_k[\scr E_k].$$
Similarly, there is some $k_1 \in H_1$ such that $$\eta_{k_1}[\scr E_{k_1}] =\bigcup_{k \in H_1}\eta_k[\scr E_k].$$

Notice that since $\cc P(\{\eta_k\}_{k = 1}^m) \neq [0,1]$, then $\varphi_{k_0}[\scr E_{k_0}] \cap \varphi_{k_1}[\scr E_{k_1}] = \emptyset$ and $\varphi_{k_0}[\scr E_{k_0}] \cup \varphi_{k_1}[\scr E_{k_1}]$ is $\Z$-homeomorphic to $\eta_{k_0}[\scr E_{k_0}] \cup \eta_{k_1}[\scr E_{k_1}] = \cc P(\{\eta_k\}_{k = 1}^m)$ via $\gamma$. 
Hence, one can find the locally inverse map $\alpha: \cc P(\{\eta_k\}_{k = 1}^m) \to [0,1]^n$, which is a strict $\Z$-map such that for all $k = 1 \ldots m$,
\begin{equation}\label{eq:gammaprimo}\gamma \circ \alpha \circ \eta_k = \eta_k.\end{equation}

Now we can extend $\alpha$ to a $\Z$-map defined over the whole unit interval $[0,1]$, $\hat\alpha: [0,1] \to [0,1]^n$ by considering the same McNaughton functions defining $\alpha$ over the whole domain:
\begin{center}
$\xymatrix{
[0,1] \supseteq  \eta_{k_0}[\scr E_{k_0}] \cup \eta_{k_1}[\scr E_{k_1}] \ar@<-10ex>[dd]^-{\hat\alpha}  && \scr E_k \ar[ll]_-{\eta_k} \ar@<0.5ex>[lldd]^-{\varphi_k} \\
&\\
[0,1]^n \supseteq \varphi_{k_0}[\scr E_{k_0}] \cup \varphi_{k_1}[\scr E_{k_1}] \ar@<11ex>[uu]^-{\gamma}&
}$
\end{center}

The map $\gamma': [0,1] \to [0,1]$ defined by $\gamma': = \gamma \circ \hat\alpha$ then provides the desired solution. Indeed, it is a solution testified by the maps $\eta_k$, as a consequence of (\ref{eq:gammaprimo}). Moreover, by definition, $\gamma'$ is less general than $\gamma$.
\end{proof}
In the case where $\cc P(\{\eta_k\}_{k = 1}^m) = [0,1]$, the above reasoning cannot be applied. In more details, generally one cannot find a single $\Z$-map $\alpha$ that acts as the local inverse of the solution $\gamma$.
Nonetheless, we can prove that one can restrict to considering solutions to the two-generated free algebra to determine the e-generalization type, or equivalently, to solutions in two variables.
\begin{lemma}\label{prop:MVlessvar2}
	Given a problem $\{\eta_k\}_{k = 1}^m$ such that $\cc P(\{\eta_k\}_{k = 1}^m) = [0,1]$, and a solution $\gamma: [0,1]^n \to [0,1]$, there is a solution $\gamma': [0,1]^2\to [0,1]$ that is less general than $\gamma$. \end{lemma}
\begin{proof}
	The claim is obvious if $1 \leq n \leq 2$, so we assume that $n \geq 3$. 	
Consider then a problem $\{\eta_k\}_{k = 1}^m$ and a solution $\gamma: [0,1]^n \to [0,1]$, testified by the strict $\Z$-maps $\{\varphi_k\}_{k = 1}^m$. 
Notice that since all maps $\varphi_k$ and $\eta_k$ are strict $\Z$-maps, by definition they are a $\Z$-homeomorphism onto their images. 
Hence, each $\scr E_k$ is $\Z$-homeomorphic to both $\varphi_{k}[\scr E_{k}] \sse [0,1]^n$ and $\eta_{k}[\scr E_{k}] \sse [0,1]$. Then, let $\beta_k: \varphi_k[\scr E_{k}] \to \scr E_{k}$ be the $\Z$-homeomorphism inverse to $\varphi_k$. 
Moreover, notice that $\varphi_{k}[\scr E_{k}]$ is $\Z$-homeomorphic to $\eta_k[\scr E_{k}]$ via $\gamma$. It follows that for all $k \in \{1, \ldots, m\}$,  there is a $\Z$-homeomorphism $\alpha_k: \eta_{k}[\scr E_{k}] \to \varphi_{k}[\scr E_{k}]$ such that $\gamma \circ \alpha_k \circ \eta_k = \eta_k$, acting as a local inverse. The situation is depicted in the following diagram:
\begin{center}
$\xymatrix{
[0,1] \supseteq \eta_k[\scr E_k] \ar@<0.5ex>[d]^-{\alpha_k}  && \scr E_k \ar[ll]_-{\eta_k} \ar@<0.1ex>[lld]^-{\varphi_k} \\
[0,1]^n \supseteq \varphi_k[\scr E_k] \ar@<0.5ex>[u]^-{\gamma} \ar@<0.5ex>[urr]^-{\beta_k}&
}$
\end{center}

Notice now that for each $k \in \{1, \ldots, m\}$, $\eta_k[\scr E_k]$ includes $\{0\}$ or $\{1\}$ (or both). By hypothesis, recall that $\cc P(\{\eta_k\}_{k = 1}^m) = \bigcup_{k = 1}^m\eta_k[\scr E_k] = [0,1]$.
Let then $$K_0 = \{k \in \{1, \ldots, m\}: 0 \in \eta_k[\scr E_k], 1 \notin \eta_k[\scr E_k] \}, \;\;K_1 = \{k \in \{1, \ldots, m\}: 1 \in \eta_k[\scr E_k]\}.$$
Then $K_0$ and $K_1$ partition $\{1, \ldots, m\}$. If $K_0 \neq \emptyset$, given its finiteness, there is some $k_0 \in K_0$ such that 
$$\eta_{k_0}[\scr E_{k_0}] =\bigcup_{k \in K_0}\eta_k[\scr E_k].$$
Similarly, there is some $k_1 \in K_1$ such that $$\eta_{k_1}[\scr E_{k_1}] =\bigcup_{k \in K_1}\eta_k[\scr E_k].$$
	
For $i \in \{k_0, k_1\}$, let us extend the above map $\alpha_i$ to a $\Z$-map on the whole $[0,1]$, $\hat\alpha_i: [0,1] \to [0,1]^n$, by considering the same McNaughton functions defining $\alpha_i$ on the extended domain. Let $i^* = 0$ if $i = k_0$ and $i^* = 1$ if $i = k_1$.
Then for each $k \in K_{i^*}$, since $\eta_k[\scr E_k] \sse \eta_i[\scr E_i]$:
\begin{equation}\label{eq:MVlessvar1}
	\gamma \circ \hat \alpha_i \circ \eta_k = \eta_k.
\end{equation}
Let us now lift the inverse $Z$-homeomorphisms $\varphi_i: \scr E_i \to [0,1]^n $ and $\beta_i: \varphi_{i}[\scr E_{i}] \to \scr E_i$ to corresponding maps between the cubes  $[0,1]^n$ and $[0,1]^2$.
Precisely, first note that one can define $$\bar \beta_i: \varphi_{i}[\scr E_{i}] \to \{i^*\} \times \scr E_{i} \sse [0,1]^2$$ by simply assigning $\bar\beta_i(x) = (i^*, \beta_i(x))$. Then we consider $\hat \beta_i: [0,1]^n \to [0,1]^2$ to be the $\Z$-map on the domain $[0,1]^n$ extending $\bar\beta_i$, using the same McNaughton functions defining $\bar \beta_i$. 
For the (local) inverse, let 
$$\bar\varphi_i: \{i^*\} \times \scr E_i  \to \varphi_{i}[\scr E_{i}]$$
by composing the first projection with $\varphi_i$, $\bar\varphi_i: = \varphi_i \circ \pi_i$. Then one can construct a map $\delta$ extending both $\bar \varphi_{k_0}$ and $\bar \varphi_{k_1}$ on the unit square, in other words, a map 
$$\delta: [0,1]^2 \to [0,1]^n$$
such that $\delta$ coincides with $\bar\varphi_i$ over $\{i^*\} \times \scr E_i$ for $i \in \{k_0, k_1\}$. This can be done by considering an appropriate triangulation of the unit square $[0,1]^2$, and appropriate McNaughton functions on the elements of the triangulation that do not contain $\{0\} \times \scr E_{k_0}$ and $\{1\} \times \scr E_{k_1}$. Thus $\delta \circ  \hat\beta_i$ is the identity over $\varphi_{i}[\scr E_{i}]$ for each $i \in \{k_0, k_1\}$. Therefore, for each $k \in K_{i^*}$, since $\hat \alpha_i \circ \eta_k \sse \varphi_i[\scr E_i]$ we get:
\begin{equation}\label{eq:MVlessvar2}
\delta \circ \hat\beta_i \circ \hat \alpha_i \circ \eta_k = \hat \alpha_i \circ \eta_k.
\end{equation}
Let then $\gamma':= \gamma \circ \delta$. We claim that it is a solution to $\{\eta_k\}_{k = 1}^m$. Indeed, let $k \in K_{i^*}$, set $\varphi_k':= \hat\beta_i \circ \hat \alpha_i \circ \eta_k$. 
The situation is clarified in the following diagram:
	\begin{center}
$\xymatrix{
[0,1]\ar@<0.5ex>[d]^-{\hat\alpha_i}  && \scr E_k \ar[ll]_-{\eta_k} \ar[lld]^-{\varphi_k} \ar[lldd]^-{\varphi_k'}\\
[0,1]^n \ar@<0.5ex>[u]^-{\gamma} \ar@<0.5ex>[d]^-{\hat\beta_i} &\\
[0,1]^2 \ar@<0.5ex>[u]^-{\delta} \ar@/_{-2.3pc}/[uu]^-{\gamma'} 
}$
\end{center}
We obtain, by the identities (\ref{eq:MVlessvar1}) and (\ref{eq:MVlessvar2}) above:
$$\gamma' \circ \varphi_k' = \gamma \circ \delta \circ \hat\beta_i \circ \hat \alpha_i \circ \eta_k = \gamma \circ \hat \alpha_i \circ \eta_k = \eta_k.$$
Hence, $\gamma'$ is a solution, and it is less general than $\gamma$ by definition. This completes the proof.
\end{proof}
As a consequence:
\begin{theorem}\label{coroll:MV1}
	The study of the e-generalization type of a problem in $\vv{MV}$ can be reduced to considering solutions in two variables. For problems whose geometric counterpart $\{\eta_k\}_{k = 1}^m$ is such that $\cc P(\{\eta_k\}_{k = 1}^m) \neq [0,1]$, one can restrict to one-variable solutions.
\end{theorem}

We now proceed to show that the e-generalization type of $\vv{MV}$, and of \L ukasiewicz infinite-valued logic, is nullary. Specifically, we show that the symbolic problem $\{0,1\}$,  given by the constant terms $0$ and $1$, is of nullary type.

From the geometric perspective, we then consider the exact polyhedra $\scr E_0 = \{0\}$ and $\scr E_1 = \{1\}$, and the $\Z$-maps:
\begin{eqnarray*}
	\eta_0: \scr E_0 \to [0,1]&:& \quad \eta_0(0) = 0;\\
	\eta_1: \scr E_1 \to [0,1]&:& \quad \eta_1(1) = 1.
\end{eqnarray*}
Therefore, we consider the geometric problem $G = \{\eta_0, \eta_1\}$. By Lemma \ref{prop:MVlessvar}, one can restrict to solutions which are maps in one variable in order to find the minimal ones, and the next result adopts this simplification.
\begin{lemma}\label{prop:MVnull}
Given any solution $\gamma:[0,1] \to [0,1]$ to the problem $G$, there is a solution $\gamma':[0,1] \to [0,1]$ strictly less general than $\gamma$.
\end{lemma}
\begin{proof}
	Let $\gamma:[0,1] \to [0,1]$ be a solution to the problem $G$, with testifying $\Z$-maps $\varphi_0, \varphi_1$, i.e., such that $\gamma \circ \varphi_i = \eta_i$ for $i \in \{0,1\}$. Notice that $\gamma$ is a McNaughton function in one variable, necessarily mapping the lattice points $\{0,1\}$ to {\em distinct} lattice points. Hence, in particular, $\gamma$ is neither the function with constant value $1$ nor $0$. Therefore, there exists a reduced fraction $\frac{p}{q} \in \mathbb{Q} \cap [0,1]$  such that $\gamma(\frac{p}{q}) \notin \{0,1\}$, with $p, q \in \mathbb{N}$. 
	
	Consider the $\Z$-map $\alpha: [0,1] \to [0,1]$ such that $\alpha: x \longmapsto \min\{1, qx\},$ and let $\gamma':= \gamma \circ \alpha$. Then it is easily seen that $\gamma'$ is also a solution to $G$, since $\gamma' \circ \varphi_i = \eta_i$ for $i \in \{0,1\}$, given that $\alpha$ is the identity map on the lattice points $0$ and $1$. 
	
	By definition, $\gamma'$ is less general than $\gamma$; we proceed to prove that $\gamma'$ is in fact strictly less general than $\gamma$. To this end, suppose by way of contradiction that there is a $\Z$-map $\beta: [0,1] \to [0,1]$ testifying the opposite, i.e., such that $\gamma = \gamma' \circ \beta$. Consider the point $\frac{p}{q}$. Then 
\begin{equation}\label{eqGammaProof}\gamma \left(\frac{p}{q}\right) = \gamma'\left(\beta\left(\frac{p}{q}\right)\right) = \gamma\left(\alpha\left(\beta\left(\frac{p}{q}\right)\right)\right).
\end{equation} Notice that $\beta$ at $\frac{p}{q}$  takes value as a McNaughton function, that is to say, a linear polynomial with an integer coefficient, hence necessarily $\beta(\frac{p}{q}) \in \{0, \frac{1}{q}, \ldots, \frac{q-1}{q}, 1\}$. Therefore, since $\alpha$ maps each point $x$ to $\min\{1, qx\}$, necessarily $\alpha(\beta(\frac{p}{q})) \in \{0,1\}$, and then, by (\ref{eqGammaProof}),   $\gamma(\frac{p}{q})=\gamma(\alpha(\beta(\frac{p}{q}))) \in \{0,1\}$. However, we picked $\frac{p}{q}$ so that $\gamma(\frac{p}{q}) \notin \{0,1\}$, a contradiction. Thus, there can be no such $\Z$-map $\beta$, which implies that $\gamma'$ is strictly less general than $\gamma$, which is an arbitrary solution to the problem $G$. The proof is complete.
\end{proof}
The propositions above entail the main result of this section.
\begin{corollary}
	The e-generalization type of \L ukasiewicz logic and MV-algebra is nullary. 
\end{corollary}
\begin{proof}
	The geometric problem G presented above has nullary type. Indeed, by Lemma \ref{prop:MVlessvar}, one can restrict to considering solutions $\gamma: [0,1] \to [0,1]$ to find a minimal one. Lemma \ref{prop:MVnull} then yields that there can be no minimal solution; in other words, the set of minimal solutions is empty. Therefore, the type of $G$, and hence of $\vv{MV}$ and \L ukasiewicz logic, is nullary.
\end{proof}
Notice that this is in contrast with the locally finite subvarieties of $\vv{MV}$, where the e-generalization type is unitary (Corollary \ref{cor:locallyfin}), but in analogy with the unification type of $\vv{MV}$, which is nullary \cite{MS13}.
\section{Conclusions}

The  algebraic approach we presented offers a novel perspective and new techniques for the study of e-generalization problems, for equational theories in general and (algebraizable) logics in particular. We have highlighted the invariants up to generality of e-generalization problems and their solutions, using the congruences of the 1-generated free algebras; these congruences always shed some light on the poset of solutions of a problem, and in some cases they fully characterize it. 

As hinted at in the last section, our methods can be used fruitfully to study a plethora of different examples. Among other varieties that should be amenable to our methods, a most interesting example is given by Heyting algebras, or, from the logical perspective, intuitionistic logic. Indeed, Heyting algebras are a 1EP variety. To tackle this and related examples one would require an in depth study that goes beyond the boundaries of this work; nonetheless we plan to pursue it in future work. 

Another relevant direction to follow is the role played by e-generalization in {\em refutation systems}, i.e., axiomatic systems meant to derive non-valid formulas \cite{Skura1,Skura2,Citkin}; in such systems the {\em reverse substitution rule} derives from a non-valid formula the refutation of all its generalizers. A connection with the present theory should be investigated.

Finally, generalization has found many applications in several domains, such as inductive logic programming \cite{Muggleton,CropperMorel,Gulwani}, parallel programming \cite{Barwell},  conceptual blending \cite{Eppeetal}, case-based reasoning \cite{ArmengolPlaza}; addressing the connections with our theoretical setting could be wortwhile.


\end{document}